\documentclass[11pt, reqno, a4paper]{amsart}
\usepackage{iftex}

\ifPDFTeX
  \usepackage[T1]{fontenc}
  \usepackage{lmodern}
  \usepackage{xcolor}
  \usepackage{graphicx}
  \usepackage{tikz}
  \usepackage{hyperref}
\else
  \usepackage[T1]{fontenc}
  \usepackage{lmodern}
  \usepackage[dvipdfmx]{xcolor}
  \usepackage[dvipdfmx]{graphicx}
  
  \usepackage{tikz}
  \usepackage[dvipdfmx]{hyperref}
\fi

\definecolor{darkblue}{rgb}{0.0, 0.0, 0.55}
\definecolor{darkmagenta}{rgb}{0.55, 0.0, 0.55}
\definecolor{darkcyan}{rgb}{0.0, 0.55, 0.55}

\hypersetup{
  colorlinks = true,
  linkcolor  = darkblue,
  citecolor  = darkmagenta,
  urlcolor   = darkcyan
}

\usepackage{amssymb, amsthm}
\usepackage{mathtools}
\usepackage{mathrsfs}
\usepackage[g]{esvect}
\usepackage{booktabs}
\usepackage{stmaryrd}

\usepackage[
  left=2.5cm,
  right=2.5cm,
  top=2.5cm,
  bottom=2.5cm,
]{geometry}

\usepackage{enumitem}
\setlist[enumerate]{
  label=\textup{(\arabic*)},
  font=\normalfont,
  leftmargin=25pt
}
\setlist[itemize]{
  font=\normalfont,
  leftmargin=25pt
}

\newcommand{\kk}{\Bbbk}

\def\rnum#1{\expandafter{\romannumeral #1}}
\def\Rnum#1{\uppercase\expandafter{\romannumeral #1}}

\theoremstyle{plain} 
\newtheorem{thm}{Theorem}[section]
\newtheorem{cor}[thm]{Corollary}
\newtheorem{lem}[thm]{Lemma}
\newtheorem{prop}[thm]{Proposition}

\theoremstyle{definition}
\newtheorem{dfn}[thm]{Definition}
\newtheorem{ex}[thm]{Example}

\newtheorem{rem}[thm]{Remark}

\numberwithin{equation}{section}

\begin{document}

\title
[Point hypergraphs and surface triangulations]
{Hypergraphs associated to skew polynomial algebras \\ and surface triangulations}

\author{Akihiro Higashitani}
\address{Department of Pure and Applied Mathematics, 
Graduate School of Information Science and Technology, 
Osaka University, 
1-5, Yamadaoka, Suita, Osaka 565-0871, Japan}
\email{higashitani@ist.osaka-u.ac.jp}

\author{Kenta Ueyama}
\address{Department of Mathematics,
Faculty of Science,
Shinshu University,
3-1-1 Asahi, Matsumoto, Nagano 390-8621, Japan}
\email{ueyama@shinshu-u.ac.jp}

\subjclass[2020]{Primary 16S38; Secondary 05C65, 05E45}
\keywords{skew polynomial algebras, point schemes, point hypergraphs, 3-uniform hypergraphs, surface triangulations, simplicial complexes}

\begin{abstract}
We study the realization problem for point schemes of skew polynomial algebras.
We approach this problem combinatorially by translating it into the study of certain $3$-uniform hypergraphs, which we call point hypergraphs.
We first give a homological criterion characterizing point hypergraphs.
Using this criterion, we study hypergraphs obtained by deleting triangles from triangulations of connected orientable closed surfaces. Deleting exactly one triangle always gives a non-point hypergraph. When the triangulation has no separating nonfacial $3$-cycles, deleting either no triangles or at least two triangles gives a point hypergraph, and the one-triangle deletions are minimal non-point hypergraphs with respect to taking induced sub-hypergraphs.
It follows that point hypergraphs cannot be characterized by finitely many forbidden induced sub-hypergraphs.
Next, for each point hypergraph, we construct an affine moduli variety of skew polynomial algebras realizing it, and determine its dimension.
For point hypergraphs arising from the above surface construction, we obtain an explicit dimension formula in terms of the number of vertices, the Euler characteristic, and the number of deleted triangles.
Finally, we characterize point hypergraphs on six vertices in terms of a four-vertex local condition together with a single exceptional obstruction.
\end{abstract}

\maketitle

\section{Introduction}

Point schemes are fundamental invariants in noncommutative algebraic geometry, especially in the study of Artin-Schelter regular algebras \cite{Ro}.  Using geometric methods in which point schemes play a central role, Artin, Tate, and Van~den~Bergh studied three-dimensional Artin-Schelter regular algebras and their modules; their work constitutes a foundational achievement in noncommutative algebraic geometry \cite{ATV1, ATV2}.

Beyond the three-dimensional case, the geometry of
point schemes also plays an important role in the study
of four-dimensional AS-regular algebras and their graded modules;
see, for example, \cite{LSV,LS,VV}.
More generally, point schemes and their associated automorphisms are used to study equivalences of graded module categories and noncommutative projective schemes associated to Artin-Schelter regular algebras; see, for example, \cite{CG, Mo, MU}.

Skew polynomial algebras form a basic class of  Artin-Schelter regular algebras.
Their graded module categories and point schemes are studied
using combinatorial methods when the parameters are roots
of unity \cite{HU,HU2}.
Motivated by these results, in this paper, we study the realization problem for point schemes of skew polynomial algebras from a combinatorial point of view.
Throughout this paper, $\kk$ denotes an algebraically closed field of characteristic $0$.  Let
\[
S_a
=\kk \langle x_1,\dots,x_n\rangle/(x_ix_j-a_{ij}x_jx_i \mid 1\le i,j\le n)
\]
be a skew polynomial algebra in $n$ variables, where
$a_{ij}\in \kk^\times$, $a_{ii}=1$, and $a_{ji}=a_{ij}^{-1}$.
By results due to Vit\'oria \cite{Vi} and, independently, to Belmans,
De Laet, and Le Bruyn \cite{BDL}, the point scheme of $S_a$ can be computed explicitly.

\begin{thm}\label{motthm}
The point scheme of $S_a$ is given by
\[
\operatorname{Pts}(S_a)
= \bigcap_{\substack{1\le i<j<k\le n\\ a_{ij}a_{jk}a_{ki}\ne 1}}
\mathcal{V}(x_ix_jx_k)
\quad \subset \mathbb P^{n-1}, 
\]
where $\mathcal{V}(x_ix_jx_k)$ is the zero-locus of $x_ix_jx_k$ in $\mathbb P^{n-1}$. 
\end{thm}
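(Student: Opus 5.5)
We compute $\operatorname{Pts}(S_a)$ from its definition as the truncated point scheme, i.e.\ the image of $\Gamma_2$ in the first factor of $\mathbb P^{n-1}\times\mathbb P^{n-1}$. Since $S_a$ is quadratic and Koszul (it is a twist of the polynomial ring), its point scheme is the projection to the first factor of
\[
\Gamma_2=\{(p,q)\in \mathbb P^{n-1}\times\mathbb P^{n-1} : p_iq_j=a_{ij}p_jq_i \text{ for all } i,j\}.
\]
Here we use the standard fact that for such algebras the truncated point schemes stabilize and the projection $\Gamma_2\to\operatorname{Pts}$ is well defined, as in Artin--Tate--Van den Bergh. The plan is therefore to show that $p=(p_1:\dots:p_n)$ admits some $q$ with $(p,q)\in\Gamma_2$ if and only if $p_ip_jp_k=0$ for every triple $i<j<k$ with $a_{ij}a_{jk}a_{ki}\neq 1$.

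Fix $p$ and let $I=\{i : p_i\neq 0\}$. For $i\in I$ and $j\notin I$, the equation $p_iq_j=a_{ij}p_jq_i$ forces $q_j=0$. Among $i,j\in I$, the equations say $q_i/p_i=a_{ij}\,q_j/p_j$. Write $r_i=q_i/p_i$ for $i\in I$; at least one $q_i$ with $i\in I$ must be nonzero, since otherwise $q=0$. The relations $r_i=a_{ij}r_j$ then force all $r_i$ to be nonzero. A nonzero solution $(r_i)_{i\in I}$ exists if and only if the cocycle $(a_{ij})$ restricted to $I$ is a coboundary, i.e.\ $a_{ij}a_{jk}a_{ki}=1$ for all $i,j,k\in I$. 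For the converse, fix $i_0\in I$ and set $r_j=a_{ji_0}$. Then $a_{ij}r_j=a_{ij}a_{ji_0}=a_{ii_0}=r_i$, using the triangle condition with $k=i_0$ together with $a_{ji}=a_{ij}^{-1}$. Conversely, if a solution exists, multiplying $r_i=a_{ij}r_j$, $r_j=a_{jk}r_k$, $r_k=a_{ki}r_i$ gives $a_{ij}a_{jk}a_{ki}=1$. Triples with repeated indices are automatic, and the condition is symmetric under permuting $i,j,k$ because $a_{ji}=a_{ij}^{-1}$, so it suffices to consider $i<j<k$.

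Hence $p$ lies in the projection if and only if no triple $i<j<k$ with $a_{ij}a_{jk}a_{ki}\neq1$ lies inside $\operatorname{supp}(p)$, which says exactly that $p_ip_jp_k=0$ for all such triples. This identifies the two sets. For the scheme-theoretic statement, one further checks that the defining ideal is radical, i.e.\ that the point scheme is reduced. The natural approach is a torus-equivariance argument: both sides are unions of coordinate-type linear subspaces, and the ideal generated by the $2\times 2$ minors after eliminating $q$ is a squarefree monomial ideal.

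We expect the main obstacle to be this last scheme-theoretic step together with the justification that $\operatorname{Pts}$ equals the projection of $\Gamma_2$ with its reduced structure. For that step we would cite Vit\'oria and Belmans--De Laet--Le Bruyn, or argue via the multilinearization of the relations. The set-theoretic computation above is routine.
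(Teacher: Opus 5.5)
The paper gives no proof of Theorem~\ref{motthm}. It quotes the result from Vit\'oria and Belmans--De~Laet--Le~Bruyn, so your argument is an independent, self-contained route. Its set-theoretic core is correct. For $p$ with support $I$, you rightly force $q$ to be supported on $I$, and you reduce solvability to $(a_{ij})|_I$ being a coboundary. There is one convention slip: $p_iq_j=a_{ij}p_jq_i$ gives $q_j/p_j=a_{ij}\,q_i/p_i$, not $q_i/p_i=a_{ij}\,q_j/p_j$. This is harmless, since replacing $a$ by $a^{-1}$ does not change which triples satisfy $a_{ij}a_{jk}a_{ki}=1$.

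Your set-level statement is all the paper uses downstream: it only needs \eqref{eq.pointdef} and the description of $\operatorname{Pts}(S_a)$ as a union of the $\mathbb P(X)$. The citation, by contrast, is what supplies the equality as schemes.

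Two steps where you lean on outside facts are not justified as written, but your own computation settles both.

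\textbf{Identifying $\operatorname{Pts}(S_a)$ with $\mathrm{pr}_1(\Gamma_2)$.} This does not follow from Koszulity. The Artin--Tate--Van den Bergh theorem that $\Gamma_2$ is the graph of an automorphism is specific to regular algebras of dimension $3$. What you actually need is that every point of $\Gamma_2$ extends to a point module. Your computation gives this: $q$ is unique and $\operatorname{supp}q=\operatorname{supp}p$, so $q\in\mathrm{pr}_1(\Gamma_2)$ again.

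\textbf{The scheme-theoretic step.} You defer this to citation, and the sketch you give (torus-equivariance, ``$2\times2$ minors after eliminating $q$'') is not an argument as stated. It is in fact a short chart computation. Call a triple $\{i,j,k\}$ bad if $a_{ij}a_{jk}a_{ki}\ne1$.
\begin{itemize}
\item Since $\operatorname{supp}q=\operatorname{supp}p$, the open piece $\Gamma_2\cap\{p_{i_0}\ne0\}$ lies in the chart $p_{i_0}=q_{i_0}=1$.
\item In that chart, the relations for the pairs $(i_0,j)$ read $q_j=a_{i_0j}p_j$.
\item The relation for a pair $(i,j)$ with $i,j\ne i_0$ then becomes $a_{i_0j}\bigl(1-a_{i_0i}a_{ij}a_{ji_0}\bigr)p_ip_j$.
\end{itemize}
Hence $\mathrm{pr}_1|_{\Gamma_2}$ is a closed immersion whose image is cut out on this chart by the squarefree monomial ideal $(p_ip_j\mid \{i_0,i,j\}\text{ bad})$. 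The identity $\partial(i,j,k)=\partial(i_0,i,j)+\partial(i_0,j,k)+\partial(i_0,k,i)$ shows that every bad triple not containing $i_0$ has a bad cone triangle through $i_0$. So this local ideal is exactly the dehomogenization of $(x_ix_jx_k\mid \{i,j,k\}\text{ bad})$.

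The same argument applies to $\mathrm{pr}_2$. Therefore $\Gamma_2$ is the graph of an automorphism of the reduced scheme $\bigcap\mathcal V(x_ix_jx_k)$, and the truncated point schemes stabilize at $d=2$. This closes your proof without appealing to the references.
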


It follows from this theorem that the point scheme of a skew polynomial
algebra in $n$ variables is a union of coordinate linear subspaces in
$\mathbb P^{n-1}$.
This raises a natural realization problem: which unions of coordinate linear subspaces in $\mathbb P^{n-1}$ can occur as the point scheme of a skew polynomial algebra?

To approach this problem, we focus on the coordinate $2$-planes contained in the point scheme.
For a nonempty subset $X\subset [n]:=\{1,2,\dots,n\}$, let
\[
\mathbb P(X)
=\{[p_1:\dots:p_n]\in \mathbb P^{n-1}
\mid p_i=0 \text{ for all } i\notin X\}
\]
be the corresponding coordinate linear subspace.
The theorem above shows that, for distinct $i,j,k$,
\begin{equation}\label{eq.pointdef}
\mathbb P(\{i,j,k\})\subset \operatorname{Pts}(S_a)
\quad\Longleftrightarrow\quad
a_{ij}a_{jk}a_{ki}=1.
\end{equation}
Moreover, the point scheme $\operatorname{Pts}(S_a)$ itself is determined by the coordinate $2$-planes it contains.
Indeed, for any nonempty $X\subset [n]$, we have
\[
\mathbb P(X)\subset \operatorname{Pts}(S_a)
\quad\Longleftrightarrow\quad
\mathbb P(\{i,j,k\})\subset \operatorname{Pts}(S_a)
\text{ for all } \{i,j,k\}\subset X.
\]

This observation suggests translating the realization problem into the language of $3$-uniform hypergraphs.
Namely, we record each coordinate $2$-plane $\mathbb P(\{i,j,k\})$ contained in $\operatorname{Pts}(S_a)$ by the triple
$\{i,j,k\}$.
Then the collection of such triples is naturally regarded as a $3$-uniform hypergraph on $[n]$.
A $3$-uniform hypergraph arising in this way will be called a point hypergraph.
We now make this terminology precise.

\begin{dfn}
Let $V$ be a finite set.  A \textit{$3$-uniform hypergraph} on $V$
is a pair $H=(V,\mathcal H)$ with
$\mathcal H\subset \binom{V}{3}$.  Elements of $\mathcal H$ are called
\textit{faces} of $H$.  Elements of
$\mathcal N:=\binom{V}{3}\setminus \mathcal H$ are called
\textit{non-faces} of $H$.
\end{dfn}

\begin{dfn}\label{def:point}
Let $H=(V,\mathcal H)$ be a $3$-uniform hypergraph.  We say that $H$
is a \textit{point hypergraph} if there exist elements $a_{ij}\in\kk^\times$ for all
ordered pairs of distinct vertices $i,j\in V$ such that the following
conditions hold:
\begin{enumerate}
\item For all distinct $i,j\in V$, one has $a_{ji}=a_{ij}^{-1}$.
\item For every $F\in\binom{V}{3}$ and for one, equivalently for every,
ordering $(i,j,k)$ of its vertices, we have
\[
F\in\mathcal H
\quad\Longleftrightarrow\quad
a_{ij}a_{jk}a_{ki}=1.
\]
\end{enumerate}
If $H$ is not a point hypergraph, we call it \textit{non-point}.
\end{dfn}

We associate to each skew polynomial algebra $S_a$
a $3$-uniform hypergraph
\[
H(S_a)=([n],\mathcal H(S_a))
\]
by setting
\[
\{i,j,k\}\in\mathcal H(S_a)
\quad\Longleftrightarrow\quad
\mathbb P(\{i,j,k\})\subset\operatorname{Pts}(S_a).
\]
By \eqref{eq.pointdef},
$\{i,j,k\}\in\mathcal H(S_a)$
if and only if
$a_{ij}a_{jk}a_{ki}=1$.
Thus $H(S_a)$ is a point hypergraph.

Conversely, if $H=([n],\mathcal H)$ is a point hypergraph on $[n]$, then
a choice of parameters satisfying Definition~\ref{def:point}, together with the conditions $a_{ii}=1$, gives a skew polynomial algebra $S_a$ such that $H=H(S_a)$.
Moreover, the coordinate linear subspace arrangement of $\operatorname{Pts}(S_a)$ is recovered from $H$ as
\[
\operatorname{Pts}(S_a)
=
\bigcup_{\substack{\varnothing \ne X\subset [n]\\ \binom{X}{3}\subset\mathcal H}} \mathbb P(X) \quad \subset \mathbb P^{n-1}.
\]
In this sense, the realization problem for point schemes of skew polynomial algebras is reduced to the combinatorial question of which $3$-uniform hypergraphs are point hypergraphs.

\begin{ex}
Take $n=4$, $a_{34}=-1$, and $a_{ij}=1$ for all other pairs $i<j$.
Then Theorem~\ref{motthm} gives
\[
\operatorname{Pts}(S_a)
=\mathcal{V}(x_1x_3x_4)\cap \mathcal{V}(x_2x_3x_4)
=\mathcal{V}(x_1,x_2)\cup \mathcal{V}(x_3)\cup \mathcal{V}(x_4)
=\mathbb P(\{3,4\})
  \cup\mathbb P(\{1,2,4\})
  \cup\mathbb P(\{1,2,3\}).
\]
Hence the $3$-uniform hypergraph
$H=\bigl([4], \{\{1,2,3\},\{1,2,4\}\}\bigr)$
is a point hypergraph.
\end{ex}

\begin{ex}\label{ex:tetrahedron}
The hypergraph $H=([4], \{\{1,2,4\},\{1,3,4\},\{2,3,4\}\})$ is not a point hypergraph. Indeed, its three faces require
$a_{12}a_{24}a_{41}
=a_{13}a_{34}a_{41}
=a_{23}a_{34}a_{42}=1$.
Since $a_{ji}=a_{ij}^{-1}$, these equalities imply
\[
a_{12}a_{23}a_{31}
=
(a_{12}a_{24}a_{41})(a_{23}a_{34}a_{42})(a_{13}a_{34}a_{41})^{-1}
=1,
\]
which forces $\{1,2,3\}$ to be a face, a contradiction.

Here we regard any four distinct vertices of a point hypergraph as the vertices of a tetrahedron.
The same calculation shows that if three triangular faces of this tetrahedron are faces of the hypergraph, then the fourth must also be a face of the hypergraph.
\end{ex}

Note that, for $n\leq 5$, this realization problem can be solved explicitly by the results of Belmans, De~Laet, and Le~Bruyn~\cite[Section 4]{BDL}.
The present work is motivated by the problem of understanding point hypergraphs more generally.

We now summarize our main results.
The results of this paper are organized around three themes: a homological
criterion for point hypergraphs, local conditions and surface obstructions, and moduli of
realizations for fixed hypergraphs.

First, we give a homological criterion for point hypergraphs by interpreting the face condition $a_{ij}a_{jk}a_{ki}=1$ in terms of the simplicial boundary map.
For a $3$-uniform
hypergraph $H=(V,\mathcal H)$, we define the group
\[G_H
=
C_1(V;\mathbb Z)/
\langle \partial\vv{T}\mid T\in\mathcal H\rangle_{\mathbb Z}.
\]
See Section~\ref{sec:chain} for the precise definition.
We prove that $H$ is a point hypergraph if and only if there exists a group homomorphism
$\rho:G_H\to\kk^\times$ such that $\rho([\partial\vv{N}]_H)\ne 1$
for every non-face $N\in\binom{V}{3}\setminus\mathcal H$
(see Theorem~\ref{thm:char}).
Thus the problem of deciding whether $H$ is a point hypergraph is reduced to a separation problem for the boundary classes of non-faces in the quotient group $G_H$.
This criterion is the main tool used throughout the paper.

Second, we study local conditions for being a point hypergraph.
A basic local necessary condition is the tetrahedron condition:
on every four vertices, three faces force the fourth,
as illustrated in Example~\ref{ex:tetrahedron}.

In the terminology of Belmans, De~Laet, and Le~Bruyn~\cite{BDL}, this condition corresponds exactly to the adequate collection condition.
Their results imply that a $3$-uniform hypergraph on five vertices is a point hypergraph if and only if it satisfies the tetrahedron condition.

Since the five-vertex case is characterized by the tetrahedron condition, it is natural to ask whether such a simple local characterization persists for larger vertex sets.
To address this question, we consider hypergraphs obtained by deleting subsets of triangles from surface triangulations and study when they are point hypergraphs.
Let $K$ be a finite triangulation of a connected orientable closed surface, and, for $D\subset K_2$, set
\[
H_D=(V(K),K_2\setminus D).
\]
We show that $H_D$ is non-point whenever $|D|=1$ (see Theorem~\ref{thm:surface-obst}).
Moreover, we prove that $H_D$ is a point hypergraph for every $D\subset K_2$ with $|D|\ne1$ if and only if $K$ has no separating nonfacial $3$-cycles 
(see Theorem~\ref{thm:minimal}).
Under the same condition, each one-triangle deletion is an induced-minimal
non-point hypergraph. The behavior of the hypergraphs $H_D$ can be summarized as follows.
\[\small
\begin{array}{c|cc}
\toprule
&
\text{$K$ has no separating nonfacial $3$-cycles}
&
\text{$K$ has a separating nonfacial $3$-cycle}
\\
\midrule
|D|=0
& \text{point}
& \text{non-point}
\\
|D|=1
& \text{induced-minimal non-point}
& \text{non-point, not induced-minimal}
\\
|D|\ge2
& \text{point}
& \text{both point and non-point can occur}
\\
\bottomrule
\end{array}
\]
In particular, this construction yields induced-minimal non-point hypergraphs
of arbitrarily large size.
Consequently, point hypergraphs cannot be characterized by finitely many
forbidden induced sub-hypergraphs.

Nevertheless, the six-vertex case still admits a complete description by the tetrahedron condition together with one exceptional obstruction.
We show that a $3$-uniform hypergraph on six vertices is a point hypergraph if and only if it satisfies the tetrahedron condition, except for one isomorphism class: the octahedral obstruction (see Theorem~\ref{thm:tetraocta}).
The proof uses a classification of circuits on six vertices, that is, minimal collections of triangles whose boundaries are linearly dependent over $\mathbb Q$.
Combining this theorem with the count of $175$ isomorphism classes of adequate collections reported in \cite{BDL}, we obtain exactly $174$ point hypergraphs on six vertices up to isomorphism.

Finally, we construct moduli spaces of realizations for fixed hypergraphs.
For a $3$-uniform hypergraph $H$, we construct an affine variety
$\mathcal M_H$ whose points correspond to graded isomorphism classes of skew
polynomial algebras whose associated hypergraph is exactly $H$.
If $H$ is a point hypergraph, then
\[
\dim\mathcal M_H=\operatorname{rank}_{\mathbb Z}G_H
\]
(see Theorem~\ref{thm:moduli-dim}).
For point hypergraphs arising from surface triangulations,
we obtain an explicit dimension formula in terms of the number
of vertices, the Euler characteristic, and the number of deleted
triangles.
Let $K$ be a finite triangulation of a connected orientable closed
surface of Euler characteristic $\chi$. Let $D\subset K_2$, and set $d=|D|$.
If $H_D=(V(K),K_2\setminus D)$ is a point hypergraph,
then $d\ne1$ and
\[
\dim\mathcal M_{H_D}=
\begin{cases}
\displaystyle
\binom{|V(K)|}{2}-2|V(K)|+2\chi+1,
& d=0,\\[2mm]
\displaystyle
\binom{|V(K)|}{2}-2|V(K)|+2\chi+d,
& d\ge2,
\end{cases}
\]
(see Theorem~\ref{thm:surface-moduli}).

\section{Chains, cochains, and the point-hypergraph criterion}\label{sec:chain}

In this section, we translate the definition of a point hypergraph into chain and cochain language.
The idea is to view the assignment $(i,j)\mapsto a_{ij}$ as a cochain, so that the product $a_{ij}a_{jk}a_{ki}$ is its value on the boundary of the oriented triangle
$(i,j,k)$.
We use the standard notions of simplicial chains and cochains,
which we recall explicitly below for convenience; see \cite[Sections~2.1 and~3.1]{Ha}.

Let $V$ be a finite set.
For notational convenience, we often write $ijk$ for
the three-element subset $\{i,j,k\}\subset V$.
We denote by $C_1(V;\mathbb Z)$ the free abelian group generated by ordered pairs $(i,j)$ with $i,j\in V$ and $i\ne j$, subject to the relations $(j,i)=-(i,j)$.
Thus the class of $(i,j)$ in $C_1(V;\mathbb Z)$ represents an oriented edge from $i$ to $j$.
Similarly, $C_2(V;\mathbb Z)$ denotes the free abelian group generated by ordered triples $(i,j,k)$ of distinct elements of $V$, subject to the alternating relations $(i_{\tau(1)},i_{\tau(2)},i_{\tau(3)}) = \operatorname{sgn}(\tau)(i_1,i_2,i_3)$ for all $\tau\in \mathfrak{S}_3$.
Thus the class of $(i,j,k)$ in $C_2(V;\mathbb Z)$ represents an oriented triangle. The boundary map $\partial:C_2(V;\mathbb Z)\to C_1(V;\mathbb Z)$ is defined by
\[\partial(i,j,k)=(i,j)+(j,k)+(k,i) \]
and extended linearly.
Since $(k,i)=-(i,k)$, this is the usual simplicial boundary of an oriented triangle.

We shall use cochains with values in the multiplicative group $\kk^\times$.
Thus
\[
C^1(V;\kk^\times):=\operatorname{Hom}(C_1(V;\mathbb Z),\kk^\times),
\qquad
C^2(V;\kk^\times):=\operatorname{Hom}(C_2(V;\mathbb Z),\kk^\times).
\]
For $\varphi\in C^1(V;\kk^\times)$, define $\delta\varphi\in C^2(V;\kk^\times)$ by
\[
(\delta\varphi)(i,j,k)
=\varphi(i,j)\varphi(j,k)\varphi(k,i).
\]
Then, for every $z\in C_2(V;\mathbb Z)$, one has
\begin{equation}\label{eq:dd}
(\delta\varphi)(z)=\varphi(\partial z).
\end{equation}

Let $H=(V,\mathcal H)$ be a $3$-uniform hypergraph.
For each $F\in\binom{V}{3}$, choose an orientation of $F$, and denote the corresponding element of $C_2(V;\mathbb Z)$ by $\vv{F}$.
If $\varphi\in C^1(V;\kk^\times)$, then replacing $\vv{F}$ by $-\vv{F}$ replaces
$(\delta\varphi)(\vv{F})$ by its inverse.
Thus whether $(\delta\varphi)(\vv{F})$ is equal to $1$ does not depend on the orientation chosen for $F$.

The following lemma rewrites Definition~\ref{def:point} in cochain language.

\begin{lem}\label{lem:cochain}
Let $H=(V,\mathcal H)$ be a $3$-uniform hypergraph. Then $H$ is a point hypergraph if and only if there exists $\varphi\in C^1(V;\kk^\times)$ such that
$F\in\mathcal H$ if and only if $(\delta\varphi)(\vv{F})=1$
for every $F\in\binom{V}{3}$.
\end{lem}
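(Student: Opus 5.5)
The plan is to show that the data of Definition~\ref{def:point} and a cochain $\varphi\in C^1(V;\kk^\times)$ are the same thing, and that under this identification the face condition matches $(\delta\varphi)(\vv{F})=1$.

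First, from parameters to a cochain. Suppose $(a_{ij})$ satisfies conditions (1) and (2) of Definition~\ref{def:point}. Define $\varphi$ on the generators of $C_1(V;\mathbb Z)$ by $\varphi(i,j)=a_{ij}$. Since $C_1(V;\mathbb Z)$ is free abelian, we may take as basis the edges $(i,j)$ with $i<j$ for some fixed total order on $V$. This gives a homomorphism into $\kk^\times$. We must check that it respects the relation $(j,i)=-(i,j)$, that is, $\varphi(j,i)=\varphi(i,j)^{-1}$. This is exactly condition (1), $a_{ji}=a_{ij}^{-1}$.

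Second, from a cochain to parameters. Given $\varphi$, set $a_{ij}=\varphi(i,j)$. Condition (1) holds because $\varphi(j,i)=\varphi(-(i,j))=\varphi(i,j)^{-1}$. The two constructions are mutually inverse.

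Third, matching the face condition. For $F=\{i,j,k\}$ with orientation $\vv{F}=\pm(i,j,k)$, the definition of $\delta$ gives
\[
(\delta\varphi)(\vv{F})=\bigl(\varphi(i,j)\varphi(j,k)\varphi(k,i)\bigr)^{\pm1}=(a_{ij}a_{jk}a_{ki})^{\pm1}.
\]
So $(\delta\varphi)(\vv{F})=1$ if and only if $a_{ij}a_{jk}a_{ki}=1$. The remark preceding the lemma shows this equivalence does not depend on the chosen orientation of $F$. Hence condition (2) for all $F\in\binom{V}{3}$ is equivalent to the stated condition on $\varphi$, which proves both directions.

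There is no real obstacle. The only point requiring care is that $\delta\varphi$ is well defined on $C_2(V;\mathbb Z)$, i.e. compatible with the alternating relations, and this is routine. Concretely, a cyclic permutation of $(i,j,k)$ leaves the product $\varphi(i,j)\varphi(j,k)\varphi(k,i)$ unchanged. A transposition, say $(i,j,k)\mapsto(j,i,k)$, inverts it, because $\varphi(j,i)\varphi(i,k)\varphi(k,j)$ is the inverse of $\varphi(i,j)\varphi(k,i)\varphi(j,k)$.
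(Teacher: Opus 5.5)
Your proof is correct and follows essentially the same route as the paper: condition (1) of Definition~\ref{def:point} is exactly compatibility of $\varphi(i,j)=a_{ij}$ with the relation $(j,i)=-(i,j)$, and $(\delta\varphi)(\vv{F})=(a_{ij}a_{jk}a_{ki})^{\pm1}$ matches the face condition. Your extra check that $\delta\varphi$ respects the alternating relations on $C_2(V;\mathbb Z)$ is something the paper takes as part of the setup, but it is correct and harmless.
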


\begin{proof}
Suppose first that $H$ is a point hypergraph, and choose labels $a_{ij}$ as in Definition~\ref{def:point}.
They define a cochain $\varphi_a\in C^1(V;\kk^\times)$ by $\varphi_a(i,j)=a_{ij}$, since $a_{ji}=a_{ij}^{-1}$.
If $\vv{F}=(i,j,k)$, then
\[ (\delta\varphi_a)(\vv{F})=a_{ij}a_{jk}a_{ki}, \]
so the desired condition follows.

Conversely, suppose that such a cochain $\varphi$ exists. Set $a_{ij}:=\varphi(i,j)$ for all ordered pairs $i\ne j$.
Then $a_{ji}=\varphi(j,i)=\varphi(-(i,j))=\varphi(i,j)^{-1}=a_{ij}^{-1}$.
Moreover, if $\vv{F}=(i,j,k)$, then
\[ a_{ij}a_{jk}a_{ki} = \varphi(i,j)\varphi(j,k)\varphi(k,i) = (\delta\varphi)(\vv{F}). \]
Thus the labels $a_{ij}$ satisfy the conditions in Definition~\ref{def:point}, and therefore $H$ is a point hypergraph.
\end{proof}

The above lemma gives a criterion for being a point hypergraph in terms of cochains.
We now reformulate this criterion in terms of a quotient of $C_1(V;\mathbb Z)$.

Let $H=(V,\mathcal H)$ be a $3$-uniform hypergraph.  Choose an orientation
$\vv{T}$ for each face $T\in\mathcal H$, and define
\[
L_H:=
\langle \partial \vv{T} \mid T\in\mathcal H \rangle_{\mathbb Z} \subset C_1(V;\mathbb Z).
\]
This subgroup does not depend on the choices of orientations, since replacing
$\vv{T}$ by $-\vv{T}$ only changes the corresponding generator by a sign.  Set
\[
G_H:=C_1(V;\mathbb Z)/L_H,
\]
and denote by $[x]_H$ the image of $x\in C_1(V;\mathbb Z)$ in $G_H$.

For a non-face $N\in\binom{V}{3}\setminus\mathcal H$, choose an orientation
$\vv{N}$.
The class $[\partial\vv{N}]_H$ changes sign if the orientation of
$N$ is reversed.  Hence, for any homomorphism $\rho:G_H\to\kk^\times$, the
condition
$\rho([\partial\vv{N}]_H)\ne 1$
is independent of the chosen orientation of $N$.

\begin{thm}\label{thm:char}
Let $H=(V,\mathcal H)$ be a $3$-uniform hypergraph.
Then $H$ is a point hypergraph if and only if there exists a group homomorphism $\rho:G_H\to\kk^\times$ such that
\[
\rho([\partial\vv{N}]_H)\ne 1
\]
for every non-face $N\in\binom{V}{3}\setminus\mathcal H$.
\end{thm}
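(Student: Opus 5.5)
The plan is to combine Lemma~\ref{lem:cochain} with the universal property of the quotient $G_H=C_1(V;\mathbb Z)/L_H$. The central point is that cochains $\varphi\in C^1(V;\kk^\times)$ vanishing (multiplicatively) on $L_H$ correspond bijectively to homomorphisms $\rho:G_H\to\kk^\times$, via $\varphi=\rho\circ\pi$, where $\pi:C_1(V;\mathbb Z)\to G_H$ is the projection. Identity~\eqref{eq:dd} then translates conditions on $\delta\varphi$ into conditions on $\rho$ evaluated at boundary classes.

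For the forward direction, suppose $H$ is a point hypergraph. Lemma~\ref{lem:cochain} gives a cochain $\varphi$ with the property that, for every $F\in\binom{V}{3}$, we have $F\in\mathcal H$ if and only if $(\delta\varphi)(\vv{F})=1$. For each face $T$, \eqref{eq:dd} gives $\varphi(\partial\vv{T})=(\delta\varphi)(\vv{T})=1$. Since $L_H$ is generated by the elements $\partial\vv{T}$, this shows $\varphi(L_H)=1$, so $\varphi$ factors through a homomorphism $\rho:G_H\to\kk^\times$. For a non-face $N$ we then have $\rho([\partial\vv{N}]_H)=\varphi(\partial\vv{N})=(\delta\varphi)(\vv{N})\neq1$, as required.

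For the converse, suppose such a $\rho$ exists, and set $\varphi:=\rho\circ\pi$. This is a homomorphism $C_1(V;\mathbb Z)\to\kk^\times$, so $\varphi\in C^1(V;\kk^\times)$. We check the condition of Lemma~\ref{lem:cochain} on each $F\in\binom{V}{3}$.
\begin{itemize}
\item If $F\in\mathcal H$, then $\partial\vv{F}\in L_H$, so $(\delta\varphi)(\vv{F})=\varphi(\partial\vv{F})=\rho(0)=1$.
\item If $F\notin\mathcal H$, then $(\delta\varphi)(\vv{F})=\rho([\partial\vv{F}]_H)\neq1$ by hypothesis.
\end{itemize}
Hence $F\in\mathcal H$ if and only if $(\delta\varphi)(\vv{F})=1$, and Lemma~\ref{lem:cochain} shows that $H$ is a point hypergraph.

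I expect no real obstacle. The only point needing care is that none of these conditions depends on the chosen orientations, and this was already observed before the statement. Note that $G_H$ is written additively while $\kk^\times$ is multiplicative, so $\rho(0)=1$.
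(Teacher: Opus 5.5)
Your proposal is correct and follows essentially the same argument as the paper. Both directions combine Lemma~\ref{lem:cochain} and \eqref{eq:dd} with the factorization of $\varphi$ through $G_H=C_1(V;\mathbb Z)/L_H$, and the converse takes $\varphi=\rho\circ\pi$ exactly as the paper does.
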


\begin{proof}
Suppose first that $H$ is a point hypergraph.
By Lemma~\ref{lem:cochain} and \eqref{eq:dd},
there exists $\varphi\in C^1(V;\kk^\times)$ such that
$F\in\mathcal H$ if and only if
$\varphi(\partial\vv{F})=1$
for every $F\in\binom{V}{3}$.
In particular, $\varphi$ is trivial on $L_H$,
and therefore factors through the quotient $G_H$.
Let $\rho:G_H\to\kk^\times$ be the induced homomorphism.
If $N\notin\mathcal H$,
then $\rho([\partial\vv{N}]_H)=\varphi(\partial\vv{N})\ne 1$.
Hence $\rho$ has the required property.

Conversely, suppose that such a homomorphism $\rho:G_H\to\kk^\times$ exists.
Let $\pi:C_1(V;\mathbb Z)\to G_H$ be the quotient map, and set
\[
\varphi:=\rho\circ\pi\in C^1(V;\kk^\times).
\]
If $T\in\mathcal H$, then $\partial\vv{T}\in L_H$, so
$\varphi(\partial\vv{T})=\rho(\pi(\partial\vv{T}))=1$.
If $N\notin\mathcal H$, then by assumption
$\varphi(\partial\vv{N})=\rho([\partial\vv{N}]_H)\ne 1$.
Thus $F\in\mathcal H$ if and only if
$\varphi(\partial\vv{F})=1$
for every $F\in\binom{V}{3}$.
By Lemma~\ref{lem:cochain} and \eqref{eq:dd}, $H$ is a point hypergraph.
\end{proof}

\begin{ex}
We reinterpret Example~\ref{ex:tetrahedron} using Theorem~\ref{thm:char}.
Let $H$ be the hypergraph in that example.
Since $\partial(1,2,3)
=\partial(1,2,4)-\partial(1,3,4)+\partial(2,3,4)
\in L_H$,
we have $[\partial(1,2,3)]_H=0$.
Since $\{1,2,3\}$ is a non-face, no homomorphism $\rho:G_H\to\kk^\times$ can satisfy the condition in Theorem~\ref{thm:char}. Hence $H$ is non-point.
\end{ex}

\section{Hypergraphs from triangulations of orientable closed surfaces}

In this section, we study $3$-uniform hypergraphs associated to finite triangulations of orientable closed surfaces. 

For a finite triangulation $K$ of a closed surface, we denote by $V(K)$ its vertex set and by $K_2$ the set of its triangles.
The following theorem generalizes Example~\ref{ex:tetrahedron}
from the boundary of a tetrahedron to triangulations of connected orientable closed surfaces.

\begin{thm}\label{thm:surface-obst}
Let $K$ be a finite triangulation of a connected orientable closed surface.
For any triangle $F^*\in K_2$, the $3$-uniform hypergraph
\[
H=(V(K), K_2\setminus\{F^*\})
\]
is a non-point hypergraph.
\end{thm}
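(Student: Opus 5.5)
The plan is to apply Theorem~\ref{thm:char}, exactly as in the reinterpretation of Example~\ref{ex:tetrahedron}. It suffices to exhibit the non-face $F^*$ with $[\partial\vv{F^*}]_H=0$ in $G_H$. Once we know $\partial\vv{F^*}\in L_H$, every homomorphism $\rho:G_H\to\kk^\times$ satisfies $\rho([\partial\vv{F^*}]_H)=1$. By Theorem~\ref{thm:char}, $H$ is then non-point.

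To produce the required relation, we use the fundamental cycle of the surface. Since the surface is connected, closed and orientable, the triangles of $K$ can be oriented coherently. This gives oriented triangles $\vv{T}$, for $T\in K_2$, such that every edge of $K$ lies in exactly two triangles and receives opposite orientations from them. Consequently the fundamental class $z=\sum_{T\in K_2}\vv{T}\in C_2(V;\mathbb Z)$ satisfies $\partial z=0$ in $C_1(V(K);\mathbb Z)$. One point needs care here. The chain groups in Section~\ref{sec:chain} are defined on all ordered pairs and triples of vertices, so we must check that the boundary of a simplicial chain of $K$ computed in $C_1(V;\mathbb Z)$ agrees with its simplicial boundary. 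This holds because both use the same relation $(j,i)=-(i,j)$.

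From $\partial z=0$ we obtain
\[
\partial\vv{F^*}=-\sum_{T\in K_2\setminus\{F^*\}}\partial\vv{T}\in L_H .
\]
This uses only that the faces of $H$ are exactly $K_2\setminus\{F^*\}$. Since $F^*\notin\mathcal H$, the argument of the first paragraph applies and $H$ is non-point.

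The only step with real content is the existence of the coherent orientation with vanishing boundary. This is standard: an orientable closed triangulated surface has $H_2(K;\mathbb Z)\cong\mathbb Z$, generated by such a cycle. We will cite Hatcher~\cite{Ha} for it rather than reprove it. We will also remark that connectedness is not needed for this particular claim, since the coherently oriented component containing $F^*$ already gives the relation.
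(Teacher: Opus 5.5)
Your proposal is correct and follows the paper's proof: coherently orient the triangles so that $\sum_{T\in K_2}\vv{T}$ is a $2$-cycle, deduce $\partial\vv{F^*}\in L_H$ and hence $[\partial\vv{F^*}]_H=0$, then conclude by Theorem~\ref{thm:char}. Your side remark is also correct: connectedness can be dropped, since the component containing $F^*$ alone already gives the relation.
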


\begin{proof}
Fix an orientation of the surface, and give each triangle of $K$ the induced orientation. Then $Z :=\sum_{T\in K_2}\vv{T}$ is a $2$-cycle, i.e., $\partial Z=0$. Therefore, 
\[
\partial\vv{F^*}
=-\sum_{T\in K_2\setminus\{F^*\}}\partial\vv{T}.
\]
Every triangle $T\in K_2\setminus\{F^*\}$ is a face of $H$, so the right-hand
side belongs to $L_H$.
Thus
$[\partial\vv{F^*}]_H=0$
in $G_H$.
Hence every homomorphism $\rho:G_H\to\kk^\times$ satisfies
$\rho([\partial\vv{F^*}]_H)=1$.
Since $F^*$ is a non-face of $H$, Theorem~\ref{thm:char} implies that $H$ is non-point.
\end{proof}

\begin{cor}\label{cor:induced-obst}
Let $H$ be a $3$-uniform hypergraph.
Suppose that $H$ has an induced sub-hypergraph isomorphic to
\[
(V(K),K_2\setminus\{F^*\})
\]
for some finite triangulation $K$ of a connected orientable closed surface and some $F^*\in K_2$.
Then $H$ is a non-point hypergraph.
\end{cor}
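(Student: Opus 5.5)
The plan is to show first that being a point hypergraph is inherited by induced sub-hypergraphs, and then to combine this with Theorem~\ref{thm:surface-obst}. For a $3$-uniform hypergraph $H=(V,\mathcal H)$ and a subset $W\subset V$, the induced sub-hypergraph is $H[W]=(W,\mathcal H\cap\binom{W}{3})$. The key claim is: if $H$ is a point hypergraph, then so is $H[W]$. Granting this, the corollary follows by contraposition. If $H$ were a point hypergraph, its induced sub-hypergraph isomorphic to $(V(K),K_2\setminus\{F^*\})$ would also be a point hypergraph. Being a point hypergraph is clearly invariant under isomorphism: one simply relabels the parameters $a_{ij}$ along the vertex bijection. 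Hence $(V(K),K_2\setminus\{F^*\})$ itself would be a point hypergraph, contradicting Theorem~\ref{thm:surface-obst}.

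To prove the key claim, I will work directly with Definition~\ref{def:point} rather than with $G_H$. Take parameters $a_{ij}\in\kk^\times$, for distinct $i,j\in V$, that witness $H$ as a point hypergraph, and restrict them to ordered pairs of distinct vertices of $W$. Condition (1) of Definition~\ref{def:point} is preserved, since $a_{ji}=a_{ij}^{-1}$ still holds. For condition (2), let $F\in\binom{W}{3}$. Then $F\in\binom{V}{3}$, so $a_{ij}a_{jk}a_{ki}=1$ if and only if $F\in\mathcal H$. Since $F\subset W$, this is equivalent to $F\in\mathcal H\cap\binom{W}{3}$, which says exactly that $F$ is a face of $H[W]$. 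So the restricted parameters witness $H[W]$ as a point hypergraph.

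Equivalently, in the language of Theorem~\ref{thm:char}, one can restrict the cochain $\varphi$ of Lemma~\ref{lem:cochain} along the inclusion $C_1(W;\mathbb Z)\hookrightarrow C_1(V;\mathbb Z)$. I do not expect any real obstacle. The only point needing care is that the induced sub-hypergraph keeps all faces of $H$ lying inside $W$, so that its non-faces are exactly the non-faces of $H$ contained in $W$. This is precisely what makes the biconditional in condition (2) restrict correctly.
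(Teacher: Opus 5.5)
Your proposal is correct and follows essentially the paper's proof: apply Theorem~\ref{thm:surface-obst} to the induced sub-hypergraph and use that induced sub-hypergraphs of point hypergraphs are point hypergraphs. The only difference is that you prove this heredity (by restricting the parameters $a_{ij}$ to $W$) and isomorphism invariance explicitly, whereas the paper takes both for granted.
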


\begin{proof}
By Theorem~\ref{thm:surface-obst}, the induced sub-hypergraph $(V(K),K_2\setminus\{F^*\})$ is non-point.
Since induced sub-hypergraphs of point hypergraphs are again point hypergraphs,
$H$ is non-point.
\end{proof}

If a non-point hypergraph contains a proper induced sub-hypergraph which is already a non-point hypergraph, then the obstruction is detected on fewer vertices.
We therefore focus on the following minimal obstructions.

\begin{dfn}\label{def:induced-minimal-non-point}
A non-point hypergraph is called \textit{induced-minimal non-point} if all of its proper induced sub-hypergraphs are point hypergraphs.
\end{dfn}

\begin{ex}
Let $H$ be the hypergraph obtained from the boundary of a tetrahedron by deleting one face.
Then $H$ is non-point by Theorem~\ref{thm:surface-obst}.
On the other hand, every proper induced sub-hypergraph of $H$ has at most
three vertices, and hence is a point hypergraph.
Therefore $H$ is induced-minimal non-point.
\end{ex}

\begin{ex}
Let $H$ be the hypergraph obtained from the boundary of a triangular bipyramid with vertex set $\{u,v,a,b,c\}$ by deleting the face $uab$.
Then $H$ is a non-point hypergraph by Theorem~\ref{thm:surface-obst}.
We check that $H$ is not induced-minimal.
Consider the induced sub-hypergraph $H|_{\{v,a,b,c\}}$.
It is obtained from the boundary of a tetrahedron by deleting the face $abc$, as illustrated below.
\[
\begin{tikzpicture}[
  scale=0.95,
  face/.style={draw=black, fill=gray!18, line join=round},
  missing/.style={draw=black, dashed, fill=white, line width=0.8pt, line join=round},
  every node/.style={font=\scriptsize}
]
  \newcommand{\uptri}[4]{\path[#1] (#2,#3) -- (#2+1,#3) -- (#2+0.5,#3+0.86) -- cycle;
    \node at (#2+0.5,#3+0.30) {$#4$};
  }
  \newcommand{\downtri}[4]{\path[#1] (#2,#3+0.86) -- (#2+1,#3+0.86) -- (#2+0.5,#3) -- cycle;
    \node at (#2+0.5,#3+0.54) {$#4$};
  }
  \newcommand{\cycletri}[3]{\draw[dashed, line width=0.8pt]
      (#1,#2+0.7) -- (#1+1.15,#2+0.7) -- (#1+0.575,#2) -- cycle;
    \node at (#1+0.575,#2+0.45) {$#3$};
  }

  \uptri{missing}{0}{2.0}{uab}
  \uptri{face}{1.25}{2.0}{uac}
  \uptri{face}{2.50}{2.0}{ubc}

  \cycletri{1.17}{0.95}{abc}

  \downtri{face}{0}{-0.25}{vab}
  \downtri{face}{1.25}{-0.25}{vac}
  \downtri{face}{2.50}{-0.25}{vbc}

  \node[font=\Large] at (4.8,1.2) {$\supsetneq$};

  \cycletri{6.92}{0.95}{abc}

  \downtri{face}{5.75}{-0.25}{vab}
  \downtri{face}{7.0}{-0.25}{vac}
  \downtri{face}{8.25}{-0.25}{vbc}

  \node[font=\footnotesize, align=center] at (1.85,-0.75)
    {triangular bipyramid minus $uab$};

  \node[font=\footnotesize, align=center] at (7.5,-0.75)
    {tetrahedron minus $abc$};
\end{tikzpicture}
\]
By Theorem~\ref{thm:surface-obst}, $H|_{\{v,a,b,c\}}$ is also non-point.
Hence $H$ is non-point but not induced-minimal.
\end{ex}

To check induced minimality, we need a criterion for
showing that proper induced sub-hypergraphs are point hypergraphs.
For this purpose, we first give a separation lemma.

\begin{lem}\label{lem:separation} 
Let $G$ be a finitely generated abelian group, and let $S\subset G$ be a finite set.
Suppose that $s\otimes 1\ne0$ in $G\otimes_{\mathbb Z}\mathbb Q$ for every $s\in S$.
Then there exists a group homomorphism $\rho:G\to\kk^\times$ such that $\rho(s)\ne1$ for every $s\in S$.
\end{lem}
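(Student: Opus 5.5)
The plan is to reduce to the free part of $G$ and then pick a suitably generic homomorphism. By the structure theorem, $G\cong \mathbb Z^r\oplus T$ with $T$ finite, and $s\otimes 1\ne 0$ in $G\otimes_{\mathbb Z}\mathbb Q$ means exactly that $s$ is not a torsion element. So for each $s\in S$, its image $\bar s$ under the projection $\pi: G\to G/T\cong\mathbb Z^r$ is a nonzero vector $\bar s=(m_{s,1},\dots,m_{s,r})\in\mathbb Z^r$. It therefore suffices to construct a homomorphism $\rho_0:\mathbb Z^r\to\kk^\times$ with $\rho_0(\bar s)\ne 1$ for all $s\in S$, and then to set $\rho=\rho_0\circ\pi$. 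This $\rho$ is trivial on $T$, which does no harm.

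A homomorphism $\rho_0:\mathbb Z^r\to\kk^\times$ is determined by the images $t_1,\dots,t_r\in\kk^\times$ of the standard basis vectors. The condition $\rho_0(\bar s)\ne1$ reads $\prod_i t_i^{m_{s,i}}\ne 1$.

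To satisfy all these conditions at once, I will use that $\kk$ has characteristic $0$, so it contains $\mathbb Q$. Take $t_i=p_i$, where $p_1,\dots,p_r$ are distinct rational primes, regarded in $\mathbb Q^\times\subset\kk^\times$. Then $\prod_i p_i^{m_{s,i}}=1$ in $\mathbb Q$ forces all $m_{s,i}=0$, by unique factorization. Since $\bar s\ne0$, we get $\rho_0(\bar s)\ne1$ for every $s\in S$.

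An alternative, if one prefers avoiding arithmetic, is a Zariski-density argument. Each condition $\prod_i t_i^{m_{s,i}}=1$ cuts out a proper closed subset of the torus $(\kk^\times)^r$, since the Laurent monomial is nonconstant. A finite union of proper closed subsets cannot cover the irreducible torus over the infinite field $\kk$. The prime choice is more concrete, though, so I would use it. Finiteness of $S$ is not even needed with primes, but finite generation of $G$ is used to split off the torsion.

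The only real subtlety, and the one step to check carefully, is the reduction. I must confirm that $s\otimes1\ne0$ is equivalent to $s\notin T$, using that $G\otimes\mathbb Q\cong\mathbb Q^r$ with kernel of $G\to G\otimes\mathbb Q$ equal to $T$. I must also confirm that $\pi$ is a genuine homomorphism onto a free group, which uses the splitting $G\cong\mathbb Z^r\oplus T$. Everything else is routine.
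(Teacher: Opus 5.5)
Your proof is correct, but it takes a different route from the paper's.

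\textbf{The paper's route.} The paper works in $W=\operatorname{Hom}_{\mathbb Q}(G\otimes\mathbb Q,\mathbb Q)$. It chooses a functional $\psi$ outside the finitely many proper subspaces $W_s$; this is where finiteness of $S$ and infiniteness of $\mathbb Q$ are used. It then clears denominators to get $\nu:G\to\mathbb Z$ with $\nu(s)\ne0$ for all $s\in S$, and sets $\rho(g)=2^{\nu(g)}$.

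\textbf{Your route.} You instead embed the free quotient $G/T\cong\mathbb Z^r$ into $\mathbb Q^\times\subset\kk^\times$ by sending basis vectors to distinct primes. Unique factorization makes this embedding injective. So your $\rho$ has kernel exactly the torsion subgroup $T$, and it works for all non-torsion elements at once; as you note, finiteness of $S$ is not needed.

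\textbf{What each buys.} The paper's construction has cyclic image, so for $r\ge2$ it necessarily kills some non-torsion elements. That is why it needs a genericity argument, which is essentially your Zariski-density alternative in linear-algebra form. In exchange, it produces an explicit integer-valued $\nu$, which the paper reuses in the proof of Lemma~\ref{lem:delete-criterion} to build the family $\rho_t=(\rho_0\circ\pi)\cdot t^{\nu}$. Your setup supplies such a $\nu$ just as easily: for a single non-torsion element, compose $\pi$ with a coordinate projection of $\mathbb Z^r$ on which its image is nonzero.

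\textbf{Your reduction step.} This is sound. The kernel of $G\to G\otimes\mathbb Q$ is the torsion subgroup. Also, $G/T$ is free because it is finitely generated and torsion-free, so you do not even need the splitting $G\cong\mathbb Z^r\oplus T$.
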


\begin{proof}
Set $V=G\otimes_{\mathbb Z}\mathbb Q$ and
$W=\operatorname{Hom}_{\mathbb Q}(V,\mathbb Q)$.
For each $s\in S$, set
$W_s=\{\psi\in W\mid \psi(s\otimes1)=0\}$.
Since $s\otimes1\ne0$, each $W_s$ is a proper subspace of $W$.
As $S$ is finite and $\mathbb Q$ is infinite, we may choose
$\psi\in W\setminus\bigcup_{s\in S}W_s$.
Then $\psi(s\otimes1)\ne0$ for every $s\in S$.

Since $G$ is finitely generated, after multiplying $\psi$ by a positive
integer we may assume that $\psi(g\otimes1)\in\mathbb Z$ for every $g\in G$.
Define $\nu:G\to\mathbb Z$ by $\nu(g)=\psi(g\otimes1)$.
Then $\nu$ is a group homomorphism and $\nu(s)\ne0$ for every $s\in S$.
Define a group homomorphism $\rho:G\to\kk^\times$ by $\rho(g)=2^{\nu(g)}$.
Since $\kk$ has characteristic $0$, we have $2^m\ne1$ for every $m\ne0$.
Hence $\rho(s)\ne1$ for every $s\in S$.
\end{proof}

The following consequence of Lemma~\ref{lem:separation} will be used below.

\begin{lem}\label{lem:rational-point-criterion}
Let $H=(V,\mathcal H)$ be a $3$-uniform hypergraph.
Suppose that
\[
[\partial\vv{N}]_H\otimes1\ne0
\quad\text{in }G_H\otimes_{\mathbb Z}\mathbb Q
\]
for every non-face $N$ of $H$.
Then $H$ is a point hypergraph.
\end{lem}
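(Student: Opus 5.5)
The plan is to combine Lemma~\ref{lem:separation} with Theorem~\ref{thm:char}; almost nothing else is needed.

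First, I will check that the hypotheses of Lemma~\ref{lem:separation} hold. Since $V$ is finite, $C_1(V;\mathbb Z)$ is a free abelian group of finite rank $\binom{|V|}{2}$. Hence its quotient $G_H=C_1(V;\mathbb Z)/L_H$ is a finitely generated abelian group.

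Next, I choose an orientation $\vv{N}$ for each non-face $N\in\binom{V}{3}\setminus\mathcal H$ and set
\[
S:=\{[\partial\vv{N}]_H \mid N\in\tbinom{V}{3}\setminus\mathcal H\}\subset G_H.
\]
This set is finite because $\binom{V}{3}$ is finite. By hypothesis, every element $s\in S$ satisfies $s\otimes1\ne0$ in $G_H\otimes_{\mathbb Z}\mathbb Q$.

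Lemma~\ref{lem:separation} then provides a homomorphism $\rho:G_H\to\kk^\times$ with $\rho(s)\ne1$ for all $s\in S$. In other words, $\rho([\partial\vv{N}]_H)\ne1$ for every non-face $N$. As noted before Theorem~\ref{thm:char}, this condition does not depend on the chosen orientations.

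Finally, Theorem~\ref{thm:char} shows that $H$ is a point hypergraph. The only points requiring any care are the finiteness of $S$ and the finite generation of $G_H$, and both are immediate, so I expect no real obstacle. The degenerate case with no non-faces is also covered: $S$ is empty and the trivial homomorphism works.
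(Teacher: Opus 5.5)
Your proposal is correct and matches the paper's proof: the paper also takes $S$ to be the set of classes $[\partial\vv{N}]_H$ of the non-faces, applies Lemma~\ref{lem:separation} to obtain $\rho$, and concludes with Theorem~\ref{thm:char}. You additionally check that $G_H$ is finitely generated and that $S$ is finite, which the paper uses without stating.
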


\begin{proof}
Set $S:=\{[\partial\vv{N}]_H\mid N \text{ is a non-face of } H\}\subset G_H$.
By assumption, every element of $S$ is nonzero after tensoring with $\mathbb Q$. 
Hence Lemma~\ref{lem:separation} gives a group homomorphism
$\rho:G_H\to\kk^\times$ such that $\rho([\partial\vv{N}]_H)\ne1$
for every non-face $N$ of $H$.  Therefore $H$ is a point hypergraph by
Theorem~\ref{thm:char}.
\end{proof}

The next lemma gives a different sufficient condition for being a point hypergraph.

\begin{lem}\label{lem:delete-criterion}
Let $H=(V,\mathcal H)$ be a $3$-uniform hypergraph, and let
$F\in\binom{V}{3}\setminus\mathcal H$.
Set $H^+=(V,\mathcal H\cup\{F\})$.
Suppose that $H^+$ is a point hypergraph and
\[
[\partial\vv{F}]_H\otimes1\ne0
\quad\text{in }G_H\otimes_{\mathbb Z}\mathbb Q.
\]
Then $H$ is a point hypergraph.
\end{lem}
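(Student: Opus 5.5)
The plan is to combine a realization of $H^+$ with a separating homomorphism for $G_H$ by taking a suitable product of two characters, using Theorem~\ref{thm:char} in both directions.

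Since $H^+$ is a point hypergraph, Theorem~\ref{thm:char} gives $\rho^+:G_{H^+}\to\kk^\times$ with $\rho^+([\partial\vv{N}]_{H^+})\ne1$ for every non-face $N$ of $H^+$. These non-faces are exactly the non-faces of $H$ other than $F$. Because $L_H\subset L_{H^+}$, there is a quotient map $q:G_H\to G_{H^+}$ with $q([x]_H)=[x]_{H^+}$. Set $\rho_1:=\rho^+\circ q$. Then $\rho_1$ is a homomorphism $G_H\to\kk^\times$ such that $\rho_1([\partial\vv{N}]_H)\ne1$ for all non-faces $N\ne F$ of $H$, and $\rho_1([\partial\vv{F}]_H)=1$, since $\partial\vv{F}\in L_{H^+}$.

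Next, apply Lemma~\ref{lem:separation} to $G_H$ with $S=\{[\partial\vv{F}]_H\}$, which is allowed by the rational nonvanishing hypothesis. This gives $\rho_2:G_H\to\kk^\times$ with $\rho_2([\partial\vv{F}]_H)\ne1$. Looking at its proof, we may take $\rho_2=2^{\nu}$ for a homomorphism $\nu:G_H\to\mathbb Z$ with $\nu([\partial\vv{F}]_H)\ne0$. Now consider $\rho_t:=\rho_1\cdot\rho_2^{\,t}$ for positive integers $t$.
\begin{itemize}
\item On $F$ we have $\rho_t([\partial\vv{F}]_H)=2^{t\nu([\partial\vv{F}]_H)}\ne1$ for every $t\ge1$, by characteristic $0$.
\item For each non-face $N\ne F$, write $g_N:=[\partial\vv{N}]_H$, $\alpha_N:=\rho_1(g_N)\ne1$ and $m_N:=\nu(g_N)$. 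The condition $\rho_t(g_N)=1$ says $2^{tm_N}=\alpha_N^{-1}$. If $m_N=0$ this fails for all $t$, since $\alpha_N\ne1$. If $m_N\ne0$, the values $2^{tm_N}$ are pairwise distinct as $t$ varies, so the condition holds for at most one $t$.
\end{itemize}
Since there are finitely many non-faces, some $t\ge1$ avoids all the bad values. The corresponding $\rho_t$ satisfies the condition of Theorem~\ref{thm:char} for $H$, so $H$ is a point hypergraph.

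The main obstacle is the combination step. A naive product $\rho_1\rho_2$ could accidentally become trivial on some $[\partial\vv{N}]_H$, which is why a one-parameter family is needed. The key point is that $t\mapsto 2^{tm}$ is injective when $m\ne0$, again using characteristic $0$.
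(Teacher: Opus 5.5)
Your proof is correct and follows essentially the same route as the paper. The paper also pulls back a realization of $H^+$ along $G_H\to G_{H^+}$, twists it by $t^{\nu}$ with $\nu:G_H\to\mathbb Z$ taken from the proof of Lemma~\ref{lem:separation}, and chooses $t\in\kk^\times$ outside the finitely many solutions of the equations $t^{m_N}=\alpha_N^{-1}$; your restriction to $t=2^k$ with $k\ge1$, using injectivity of $k\mapsto 2^{km}$ in characteristic $0$ in place of finiteness of roots, is an equally valid way to avoid the bad parameters.
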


\begin{proof}
By Theorem~\ref{thm:char}, there exists a group homomorphism
$\rho_0:G_{H^+}\to\kk^\times$
such that
$\rho_0([\partial\vv{N}]_{H^+})\ne1$
for every non-face $N$ of $H^+$.

Let $\pi:G_H\to G_{H^+}$ be the natural quotient map.
By the proof of Lemma~\ref{lem:separation}, the assumption $[\partial\vv{F}]_H\otimes1\ne0$ implies the existence of a group homomorphism $\nu:G_H\to\mathbb Z$
such that $\nu([\partial\vv{F}]_H)\ne0$.
For $t\in\kk^\times$, we define a group homomorphism
\[
\rho_t:G_H\to\kk^\times; \ g \mapsto \rho_0(\pi(g))t^{\nu(g)}.
\]

For the new non-face $F$, since $\pi([\partial\vv{F}]_H)=0$, we have
$\rho_t([\partial\vv{F}]_H)=t^{\nu([\partial\vv{F}]_H)}$.
Since $\nu([\partial\vv{F}]_H)\ne0$, the condition
$\rho_t([\partial\vv{F}]_H)=1$
can hold for only finitely many values of $t\in\kk^\times$.

Let $N$ be a non-face of $H^+$.
If $\nu([\partial\vv{N}]_H)=0$,
then $\rho_t([\partial\vv{N}]_H)
= \rho_0([\partial\vv{N}]_{H^+})
\ne1$ for every $t \in \kk^\times$.
If $\nu([\partial\vv{N}]_H)\ne0$,
then the condition
$\rho_t([\partial\vv{N}]_H)=1$
can hold for only finitely many values of $t\in\kk^\times$.

Since there are only finitely many non-faces and $\kk^\times$ is infinite,
we may choose $t\in\kk^\times$ avoiding all exceptional values.
For this choice of $t$, we have
$\rho_t([\partial\vv{N}]_H)\ne1$
for every non-face $N$ of $H$.
Hence $H$ is a point hypergraph by Theorem~\ref{thm:char}.
\end{proof}

\begin{dfn}
Let $K$ be a finite triangulation of a closed surface.  A \textit{$3$-cycle} in $K$ is a three-element subset $C=\{i,j,k\}\subset V(K)$ 
such that every two-element subset of $C$ is an edge of $K$.
A $3$-cycle $C$ is called \textit{facial} if $C\in K_2$, and \textit{nonfacial} otherwise.
If $C$ is nonfacial, then the three edges spanned by $C$ form a simple closed curve $\gamma_C\subset |K|$, where $|K|$ is a geometric realization of $K$.
We say that $C$ is \textit{separating} if the curve $\gamma_C$ is separating, that is, if $|K|\setminus\gamma_C$ is disconnected.
\end{dfn}

\begin{rem}\label{rem:flag}
We recall that a simplicial complex is \textit{flag} if every minimal non-face has cardinality two.
Let $K$ be a finite triangulation of a closed surface.  If $K$ is flag, then $K$ has no nonfacial $3$-cycles.
Indeed, a nonfacial $3$-cycle is a minimal non-face of cardinality three, so no such cycle can occur in a flag complex.
\end{rem}

The following is a standard fact from surface topology; see, for example, \cite[Section 1.3]{FM}.

\begin{lem}\label{lem:sepcurve}
Let $\Sigma$ be a connected orientable closed surface, and let
$\gamma\subset\Sigma$ be a simple closed curve.  Then $\gamma$ is separating
if and only if its homology class $[\gamma]\in H_1(\Sigma;\mathbb Z)$ is zero.
\end{lem}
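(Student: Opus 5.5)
We would prove the two implications separately. We work in the situation used later, where $\Sigma=|K|$ and $\gamma$ is a subcomplex (for example, $\gamma_C$ for a nonfacial $3$-cycle $C$). The general statement reduces to this case by approximating $\gamma$ by a PL curve and subdividing. Since $\Sigma$ is orientable, a regular neighbourhood $N$ of $\gamma$ is an annulus rather than a Möbius band. So $\gamma$ is two-sided, and $N\setminus\gamma$ has exactly two components, $N_+$ and $N_-$.

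\emph{Separating implies null-homologous.} Suppose $\Sigma\setminus\gamma$ is disconnected. Every component of $\Sigma\setminus\gamma$ meets $N\setminus\gamma=N_+\cup N_-$, so there are exactly two components, $A_+\supset N_+$ and $A_-\supset N_-$. The closure $\overline{A_+}$ is a compact subcomplex (after subdivision) and a surface with boundary $\gamma$. Orient all its triangles coherently using the orientation of $\Sigma$, and let $Z_+$ be their sum. Interior edges cancel in $\partial Z_+$, because each such edge lies in exactly two triangles of $\overline{A_+}$ with opposite induced orientations. The edges of $\gamma$ each lie in exactly one triangle of $\overline{A_+}$. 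Hence $\partial Z_+=\pm\gamma$ as a $1$-chain, so $[\gamma]=0$ in $H_1(\Sigma;\mathbb Z)$.

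\emph{Nonseparating implies not null-homologous.} Suppose $\Sigma\setminus\gamma$ is connected.
\begin{enumerate}
\item Choose a point $p_+\in N_+$ and a point $p_-\in N_-$, and join them by an arc $\alpha$ in $\Sigma\setminus\gamma$.
\item Close $\alpha$ up by a short arc inside $N$ that crosses $\gamma$ transversally once. This gives a closed curve $\delta$ meeting $\gamma$ in exactly one transverse point.
\item Show that $[\gamma]\ne0$ using a cocycle that detects crossings of $\gamma$.
\end{enumerate}
For the last step, pass to a subdivision in which $\delta$ is a subcomplex of the dual cell structure, or work directly as follows. Define a $1$-cochain $c$ with values in $\mathbb Z$ on the edges of a fine subdivision: an edge gets the value $\pm1$ if it crosses from $N_-$ to $N_+$ through $\delta$'s thin neighbourhood on one side, and $0$ otherwise. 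Equivalently, take the intersection-number cochain $c(\sigma)=\sigma\cdot\delta$ for $1$-chains $\sigma$ in general position, which is well defined on cycles by Poincaré duality.

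The required properties are that $c$ is a cocycle, which holds because $\delta$ is closed, and that $c(\gamma)=\gamma\cdot\delta=\pm1$. Then $c$ evaluates nontrivially on $\gamma$. If $\gamma=\partial Z$, we would get $c(\gamma)=(\delta c)(Z)=0$, a contradiction. Hence $[\gamma]\ne0$.

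The main obstacle is the well-definedness of this intersection cocycle. Concretely, one must show that the algebraic intersection with a closed curve is a homology invariant. We would handle this by citing Poincaré duality: $[\delta]$ determines a class in $H^1(\Sigma;\mathbb Z)$ whose pairing with $[\gamma]$ is the algebraic intersection number. Orientability is used twice: for the two-sidedness of $\gamma$ and for the integral duality. Alternatively, the entire argument can be given with $\mathbb Z/2$ coefficients, where mod-$2$ intersection numbers suffice, since a class that is nonzero mod $2$ is nonzero integrally. Since the paper treats this as standard (citing \cite[Section 1.3]{FM}), we would present it at this level of detail.
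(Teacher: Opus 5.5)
The paper does not prove this lemma; it only quotes it from \cite[Section~1.3]{FM}. Your argument is correct, and it is the standard one.

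\textbf{Comparison with the paper.} Your ``separating $\Rightarrow$ null-homologous'' direction, using the coherently oriented triangles on one side, is exactly the computation the paper itself carries out in (1)$\Rightarrow$(4) of Theorem~\ref{thm:minimal}. Your converse, via a dual curve $\delta$ with $\gamma\cdot\delta=\pm1$ and homology invariance of intersection numbers, is the textbook route.

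The paper, however, applies the lemma only to $\gamma=\gamma_C$, which is already a subcomplex of $K$. It also uses only the direction $[\gamma]=0\Rightarrow$ separating, in (4)$\Rightarrow$(3), which is precisely the step you flag as the main obstacle. In that setting there is a duality-free argument in the paper's own style:
\begin{itemize}
\item Suppose $\partial Z=\pm\gamma$ with $Z=\sum_{T\in K_2}a_T\vv{T}$, triangles coherently oriented.
\item For each edge not on $\gamma$, its coefficient in $\partial Z$ is $\pm(a_T-a_{T'})$, where $T,T'$ are the two triangles containing it. Hence $a_T$ is constant along chains of triangles glued along edges off $\gamma$.
\item If $|K|\setminus\gamma$ is connected, any two triangles are joined by such a chain. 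So $Z$ is a multiple of the fundamental cycle, and $\partial Z=0$, a contradiction.
\end{itemize}
This is the same coefficient propagation the paper uses for $F^*$ and $D$. It works over $\mathbb{Q}$ directly, which makes the paper's torsion-freeness step superfluous. Since the coefficients are constant on each side and must jump by $\pm1$ across $\gamma$, it also gives both directions at once. Your route costs Poincar\'e duality, but it covers arbitrary curves and identifies the obstruction as the intersection pairing.

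\textbf{Minor points.}
\begin{itemize}
\item Your first ``explicit'' description of $c$ is garbled: $N_\pm$ are the sides of $\gamma$, not of $\delta$. The cochain you want is $c(e)=$ the signed number of crossings of the edge $e$ with $\delta$, for $\delta$ transverse to the $1$-skeleton. It is a cocycle because each arc of $\delta$ in a triangle enters and leaves it once.
\item You use $\delta$ both for the curve and for the coboundary.
\item Your accounting of orientability is incomplete. It is indispensable in the separating direction, through the coherent orientation of $\overline{A_+}$. The circle cutting a Klein bottle into two M\"obius bands separates, yet represents twice the infinite-order class of a core. By contrast, the nonseparating direction holds on every closed surface via mod-$2$ intersection.
\item Reducing an arbitrary topological $\gamma$ to a PL one needs tameness of simple closed curves in surfaces (Schoenflies), not mere approximation. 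This is irrelevant for the paper, where $\gamma_C$ is a subcomplex.
\end{itemize}
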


We now characterize when the one-triangle obstructions arising from finite
triangulations of orientable closed surfaces are induced-minimal non-point.
The same condition is also equivalent to the property that every hypergraph obtained by deleting either no triangles or at least two triangles is a point hypergraph.
For a subset $D\subset K_2$, we write
\[
H_D=(V(K),K_2\setminus D).
\]

\begin{thm}\label{thm:minimal}
Let $K$ be a finite triangulation of a connected orientable closed surface, and let $F^*\in K_2$.
Set
\begin{align*}
H=(V(K),K_2\setminus\{F^*\})
\qquad \text{and} \qquad
H^{+}=(V(K),K_2).
\end{align*}
Thus $H=H_{\{F^*\}}$ and $H^+=H_\varnothing$.
In view of Theorem~\ref{thm:surface-obst}, $H$ is a non-point hypergraph.
The following conditions are equivalent.
\begin{enumerate}
\item $H$ is an induced-minimal non-point hypergraph.
\item $H^{+}$ is a point hypergraph.
\item $H_D$ is a point hypergraph for every $D\subset K_2$ with $|D|\ne1$.
\item The triangulation $K$ has no separating nonfacial $3$-cycles.
\end{enumerate}
\end{thm}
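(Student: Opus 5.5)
The plan is to prove (2)$\Leftrightarrow$(4) by an explicit rational computation in $G_{H^+}$, then (2)$\Rightarrow$(3)$\Rightarrow$(2), and finally (1)$\Leftrightarrow$(2) by induced restriction arguments.

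\emph{Computing $G_{H^+}$.} The lattice $L_{H^+}$ is the image of the simplicial boundary $\partial_2:C_2(K)\to C_1(K)$. Here $C_1(K)$ is the span of the edges of $K$, which is a direct summand of $C_1(V;\mathbb Z)$ whose complement is spanned by the non-edges. Hence
\[
G_{H^+}\cong \bigl(C_1(K)/\operatorname{im}\partial_2\bigr)\oplus \mathbb Z^{\#\text{non-edges}} .
\]
Let $N$ be a non-face of $H^+$.
\begin{itemize}
\item If $N$ contains a non-edge of $K$, then $\partial\vv N$ has a coordinate $\pm1$ in the free summand. Hence $[\partial\vv N]_{H^+}\otimes1\neq0$.
\item Otherwise $N$ is a nonfacial $3$-cycle and $\partial\vv N$ is the cycle $\gamma_N$. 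Its class in $(C_1(K)/\operatorname{im}\partial_2)\otimes\mathbb Q$ vanishes iff $[\gamma_N]=0$ in $H_1(K;\mathbb Q)$. Since $H_1$ of an orientable closed surface is torsion-free, this is iff $[\gamma_N]=0$ in $H_1(K;\mathbb Z)$, i.e.\ iff $N$ is separating, by Lemma~\ref{lem:sepcurve}.
\end{itemize}
Therefore (4) gives point-ness of $H^+$ via Lemma~\ref{lem:rational-point-criterion}. Conversely, for a separating nonfacial $C$ we have $\gamma_C\in\operatorname{im}\partial_2$ integrally, so $[\partial\vv C]_{H^+}=0$ and Theorem~\ref{thm:char} shows $H^+$ is non-point. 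This proves (2)$\Leftrightarrow$(4).

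\emph{(2)$\Leftrightarrow$(3).} The direction (3)$\Rightarrow$(2) is the case $D=\varnothing$. For (2)$\Rightarrow$(3), take $|D|\ge2$. We have $\ker(\partial_2\otimes\mathbb Q)=\mathbb Q Z$ with $Z=\sum_T\vv T$, since $H_2(K;\mathbb Q)=\mathbb Q$ for a connected orientable closed surface. Non-faces of $H_D$ are of two kinds.
\begin{itemize}
\item Non-faces of $H^+$: these satisfy $[\partial\vv N]_{H^+}\otimes1\ne0$ by the computation above, using (4). Since $L_{H_D}\subset L_{H^+}$, they remain nonzero in $G_{H_D}\otimes\mathbb Q$.
\item Triangles $F\in D$: suppose $\partial\vv F=\sum_{T\notin D}c_T\partial\vv T$ rationally. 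Then $\vv F-\sum c_T\vv T=\lambda Z$. Comparing coefficients at $F$ gives $\lambda=1$, while at another $F'\in D$ it gives $\lambda=0$, a contradiction.
\end{itemize}
Lemma~\ref{lem:rational-point-criterion} then shows that $H_D$ is a point hypergraph.

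\emph{(2)$\Rightarrow$(1).} Let $W\subsetneq V(K)$ and choose $v\notin W$.
\begin{itemize}
\item If $F^*\not\subset W$, then $H|_W=H^+|_W$, which is point as an induced sub-hypergraph of a point hypergraph.
\item If $F^*\subset W$, pick a triangle $F'\ni v$. Then $H|_W=H_{\{F^*,F'\}}|_W$, which is point by (3).
\end{itemize}

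\emph{(1)$\Rightarrow$(4).} I expect this to be the main obstacle, since it needs a careful cut-and-cap surgery. Suppose $C$ is a separating nonfacial $3$-cycle. Cutting along $\gamma_C$ splits $|K|$ into two pieces. Let $P$ be the piece not containing the interior of $F^*$, and let $K'$ be $P$ capped with the triangle $C$; this is a triangulation of a connected orientable closed surface with vertex set $V'$. Some vertex of $F^*$ lies off $C$, since otherwise $F^*=C$, which is impossible because $C$ is nonfacial. That vertex lies strictly on the other side, so $V'\subsetneq V(K)$.

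Any triangle of $K$ with all vertices in $V'$ either lies in $P$, or lies on the $F^*$ side with all vertices on $C$; the latter would force it to equal $C$, which is impossible. Hence $H|_{V'}=(V',K'_2\setminus\{C\})$, which is non-point by Theorem~\ref{thm:surface-obst}. So $H$ is not induced-minimal.

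The points to check here are that $K'$ is a genuine simplicial triangulation, and that this restriction argument is airtight.
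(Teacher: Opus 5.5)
Your proof is correct. It is organized differently from the paper's, which proves the cycle (1)$\Rightarrow$(4)$\Rightarrow$(3)$\Rightarrow$(2)$\Rightarrow$(1). Your (4)$\Rightarrow$(3) computation is essentially the paper's; the differences are elsewhere.

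\emph{(2)$\Rightarrow$(4).} You get this directly from the splitting $G_{H^+}\cong(C_1(K)/\operatorname{im}\partial_2)\oplus\mathbb Z^{\#\text{non-edges}}$, together with the ``separating $\Rightarrow$ null-homologous'' direction of Lemma~\ref{lem:sepcurve}. The paper reaches (4) from (2) only by passing through (1).

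\emph{(3)$\Rightarrow$(1).} This is the most substantive difference. You observe that deleting an extra triangle $F'$ through a vertex $v\notin W$ does not change the restriction to $W$, so $H|_W$ is an induced sub-hypergraph of the point hypergraph $H_{\{F^*,F'\}}$. The paper's (2)$\Rightarrow$(1) instead proves $[\partial\vv{F^*}]_{H|_U}\otimes1\ne0$ by a support argument, using exactly such a triangle through a missing vertex. It then invokes the perturbation Lemma~\ref{lem:delete-criterion}. Your route makes that lemma unnecessary, at the price of deriving (1) from (2) via (4) and (3).

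\emph{(1)$\Rightarrow$(4).} You cap off the far side $P$ of $\gamma_C$ to obtain a new closed surface $K'$ and apply Theorem~\ref{thm:surface-obst}. The paper uses only the identity $\partial\bigl(\sum_{T\in\mathcal T}\vv T\bigr)=\pm\partial\vv C$ and the inclusion of $\mathcal T$ in the faces of $H|_U$. It therefore needs neither that $K'$ is a surface nor that the face set of the restriction is exactly $K'_2\setminus\{C\}$.

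The checks you flagged do go through:
\begin{itemize}
\item Since $\gamma_C$ is two-sided and separating, each of its edges borders exactly one triangle of $P$.
\item At a vertex $x\in C$, the $P$-side arc of $\operatorname{lk}_K(x)$ joining the other two vertices of $C$ has length at least $2$, because $C$ is nonfacial. Adding the corresponding edge of $C$ closes it into a cycle.
\item Orientability of $K'$ follows from the boundary identity above.
\item A vertex off $C$ has its open star in a single component of $|K|\setminus\gamma_C$. This gives both $V'\subsetneq V(K)$ and the exact description of $H|_{V'}$.
\end{itemize}
In exchange for these checks, your version shows that the smaller obstruction is again a one-triangle deletion of a closed orientable surface triangulation.
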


\begin{proof}
We prove (1) $\Rightarrow$ (4) $\Rightarrow$ (3) $\Rightarrow$ (2) $\Rightarrow$ (1).

(1) $\Rightarrow$ (4) :
We prove the contrapositive.
Suppose that $K$ has a separating nonfacial $3$-cycle $C$.
Let $\Omega$ be a component of $|K|\setminus\gamma_C$ which does not contain
the interior of $F^*$, and set
\[
\mathcal T=\{T\in K_2\mid \text{the interior of $T$ is contained in $\Omega$}\}.
\]
Let $U$ be the set of vertices contained in the triangles of $\mathcal T$.
Then $U\subsetneq V(K)$ and $C\in\binom{U}{3}$.
Orienting the triangles in $\mathcal T$ compatibly with the orientation of
$K$, we have
\[
\partial\left(\sum_{T\in\mathcal T}\vv{T}\right)
=
\pm\partial\vv{C}.
\]
Since $F^*\notin\mathcal T$, every triangle in $\mathcal T$ is a face of
$H|_U$.  Hence $[\partial\vv{C}]_{H|_U}=0$ in $G_{H|_U}$.
Since $C$ is a non-face of $H|_U$, Theorem~\ref{thm:char} implies that $H|_U$ is non-point.
Therefore $H$ is non-point but not induced-minimal.

(4) $\Rightarrow$ (3):
Suppose that $K$ has no separating nonfacial $3$-cycles.
Let $D\subset K_2$ with $|D|\ne1$, and let $N$ be a non-face of $H_D$.
We show that $[\partial\vv{N}]_{H_D}\otimes1\ne0$
in $G_{H_D}\otimes_{\mathbb Z}\mathbb Q$.

First suppose that $N\notin K_2$.
If some edge $e$ of $N$ is not an edge of $K$, then the $e$-coefficient of
$\partial\vv{N}$ is $\pm1$, whereas every element of
$\langle \partial\vv{T}\mid T\in K_2\setminus D\rangle_{\mathbb Q}$
has $e$-coefficient $0$.
Hence the claim holds in this case.
It remains to consider the case where all three edges of $N$ are edges of
$K$.
Then $N$ is a nonfacial $3$-cycle of $K$.
Suppose, to the contrary, that
$[\partial\vv{N}]_{H_D}\otimes 1=0$.
Then $\partial\vv{N}$ is the boundary of a rational $2$-chain on $K$.
Hence $[\gamma_N] \in H_1(|K|;\mathbb Q)$ is zero.
Since $|K|$ is an orientable closed surface, $H_1(|K|;\mathbb Z)$ is torsion-free, 
so $[\gamma_N] \in H_1(|K|;\mathbb Z)$ is also zero.
By Lemma~\ref{lem:sepcurve}, $\gamma_N$ is separating, contrary to the
assumption.
Hence $[\partial\vv{N}]_{H_D}\otimes1\ne0$.

Next suppose that $N\in D$.
Since $|D|\ne1$, we have $|D|\ge2$.
Suppose, to the contrary, that
$[\partial\vv{N}]_{H_D}\otimes1=0$.
Then there exists a rational $2$-chain $c$ supported on the faces of $H_D$
such that $\partial c=\partial\vv{N}$.
Hence $z:=c-\vv{N}$ is a nonzero rational $2$-cycle on $K$.
Write $z=\sum_{T\in K_2}a_T\vv{T}$.
If two triangles $T$ and $T'$ share an edge, then comparing the coefficient
of that edge in $\partial z=0$ gives $a_T=\pm a_{T'}$.
Hence the support of $z$ is closed under passing to adjacent triangles.
Since the dual graph of the connected closed surface $K$ is connected and
$z\ne0$, every triangle of $K$ belongs to the support of $z$.
On the other hand, choose $F\in D\setminus\{N\}$.
Then $F$ appears neither in $c$ nor in $\vv{N}$, and hence does not belong
to the support of $z$, a contradiction.
Therefore $[\partial\vv{N}]_{H_D}\otimes1\ne0$.

Thus the desired nonvanishing holds for every non-face of $H_D$.
By Lemma~\ref{lem:rational-point-criterion}, $H_D$ is a point hypergraph.
This proves (3).

(3) $\Rightarrow$ (2):
Taking $D=\varnothing$ in (3), we obtain that $H^+=H_\varnothing$ is a point hypergraph.

(2) $\Rightarrow$ (1):
Suppose that $H^+$ is a point hypergraph.
As noted above, $H$ is non-point by Theorem~\ref{thm:surface-obst}.
It remains to prove that every proper induced sub-hypergraph of $H$ is a point hypergraph.

Let $U\subsetneq V(K)$. If $F^*\not\subset U$, then $H|_U=H^+|_U$.
Since induced sub-hypergraphs of point hypergraphs are again point hypergraphs,
$H|_U$ is a point hypergraph.

Assume now that $F^*\subset U$.
Since $H^+$ is a point hypergraph, the induced sub-hypergraph $H^+|_U$ is a point hypergraph.
We show that $[\partial\vv{F^*}]_{H|_U}\otimes1\ne0$
in $G_{H|_U}\otimes_{\mathbb Z}\mathbb Q$.
Suppose otherwise.
Then there exists a rational $2$-chain $c$ supported on
the faces of $H|_U$ such that $\partial c=\partial\vv{F^*}$.
Hence
$z:=c-\vv{F^*}$ is a nonzero rational $2$-cycle on $K$, since $F^*$ does not appear in $c$.
Write $z=\sum_{T\in K_2} a_T\vv{T}$.
If two triangles $T$ and $T'$ share an edge, then comparing the coefficient of that edge in $\partial z=0$ gives $a_T=\pm a_{T'}$.
Hence the support of $z$ is closed under passing to adjacent triangles.
Since the dual graph of the connected closed surface $K$ is connected and $z\ne0$, every triangle of $K$ belongs to the support of $z$.
On the other hand, choose a vertex $v\in V(K)\setminus U$.
Since $K$ is a triangulation of a closed surface, there is a triangle $T\in K_2$ containing
$v$.  Then $T\notin\binom{U}{3}$, so $T$ does not appear in $c$.
Since $F^*\subset U$, we see that $T \ne F^*$.
Hence $T$ does not appear in $z=c-\vv{F^*}$, a contradiction.
Therefore $[\partial\vv{F^*}]_{H|_U}\otimes1\ne0$.
By Lemma~\ref{lem:delete-criterion}, applied to
$H|_U\subset H^+|_U$, the hypergraph $H|_U$ is a point hypergraph.

Since $H$ is non-point and $H|_U$ is a point hypergraph for every proper subset $U\subsetneq V(K)$, it follows that $H$ is induced-minimal non-point.
\end{proof}

\begin{cor}\label{cor:flag-minimal}
Let $K$ be a finite flag triangulation of a connected orientable closed surface, and let $F^*\in K_2$. Then
$H=(V(K),K_2\setminus\{F^*\})$
is an induced-minimal non-point hypergraph and 
$H^{+}=(V(K),K_2)$
is a point hypergraph.
\end{cor}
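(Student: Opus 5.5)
This corollary should follow directly from Theorem~\ref{thm:minimal} together with Remark~\ref{rem:flag}. The plan is to verify condition (4) of Theorem~\ref{thm:minimal} for a flag triangulation and then read off conditions (1) and (2).

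First, since $K$ is flag, Remark~\ref{rem:flag} says that $K$ has no nonfacial $3$-cycles at all. The reason is that a nonfacial $3$-cycle $C$ would be a non-face of $K$ all of whose proper subsets (its vertices and its three edges) are faces. It would therefore be a minimal non-face of cardinality three, which a flag complex cannot have. In particular, $K$ has no \emph{separating} nonfacial $3$-cycles, so condition (4) of Theorem~\ref{thm:minimal} holds for any choice of $F^*\in K_2$.

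Next, Theorem~\ref{thm:minimal} says that (1), (2), (3), (4) are equivalent. Since (4) holds, so do the other three. Condition (1) states that $H=(V(K),K_2\setminus\{F^*\})$ is induced-minimal non-point. Condition (2) states that $H^+=(V(K),K_2)$ is a point hypergraph. These are exactly the two assertions of the corollary.

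There is no real obstacle here. The only point to check is that the hypotheses match: $K$ is assumed to be a finite triangulation of a connected orientable closed surface, exactly as Theorem~\ref{thm:minimal} requires. The flag hypothesis enters only through Remark~\ref{rem:flag}, to guarantee condition (4).
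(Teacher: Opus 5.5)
Your proposal is correct and follows the paper's proof exactly. Remark~\ref{rem:flag} rules out nonfacial $3$-cycles in a flag triangulation, so condition (4) of Theorem~\ref{thm:minimal} holds, and conditions (1) and (2) of that theorem are precisely the two assertions of the corollary.
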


\begin{proof}
By Remark~\ref{rem:flag}, $K$ has no nonfacial $3$-cycles.
Hence the assertion follows from Theorem~\ref{thm:minimal}.
\end{proof}

\begin{ex}
Let $K$ be a finite triangulation of the $2$-sphere $S^2$, and let $F^*\in K_2$.
Since every simple closed curve on $S^2$ is separating, Theorem~\ref{thm:minimal} shows that $(V(K),K_2\setminus\{F^*\})$ is induced-minimal non-point if and only if $K$ has no nonfacial $3$-cycles.
This condition is equivalent to saying that
$K$ is flag or the boundary complex of a tetrahedron.
\end{ex}

\begin{ex}
Let $T_{3,3}$ be the standard $3\times3$ triangulation of the torus, shown
below in a fundamental domain.
\begin{center}
\begin{tikzpicture}[
  scale=1.05,
  v/.style={circle, fill, inner sep=1.1pt},
  hv/.style={circle, fill, inner sep=1.8pt},
  lbl/.style={fill=white, inner sep=0.8pt, font=\scriptsize},
  every node/.style={font=\scriptsize}
]
  \foreach \i in {0,1,2,3}{
    \draw[gray!50] (\i,0) -- (\i,3);
    \draw[gray!50] (0,\i) -- (3,\i);
  }
  \foreach \i in {0,1,2}{
    \foreach \j in {0,1,2}{
      \draw[gray!50] (\i+1,\j) -- (\i,\j+1);
    }
  }

  \draw[gray!70] (0,0) rectangle (3,3);

  \draw[gray!70, line width=0.5pt] (-0.10,1.45) -- (0,1.6) -- (0.10,1.45);
  \draw[gray!70, line width=0.5pt] (2.90,1.45) -- (3,1.6) -- (3.10,1.45);

  \draw[gray!70, line width=0.5pt] (1.35,-0.10) -- (1.5,0) -- (1.35,0.10);
  \draw[gray!70, line width=0.5pt] (1.55,-0.10) -- (1.7,0) -- (1.55,0.10);

  \draw[gray!70, line width=0.5pt] (1.35,2.90) -- (1.5,3) -- (1.35,3.10);
  \draw[gray!70, line width=0.5pt] (1.55,2.90) -- (1.7,3) -- (1.55,3.10);

  \draw[very thick, line cap=round] (0,1) -- (3,1);

  \foreach \i in {0,1,2,3}{
    \foreach \j in {0,1,2,3}{
      \node[v] at (\i,\j) {};
    }
  }

  \foreach \p in {(0,1),(1,1),(2,1),(3,1)}{
    \node[v] at \p {};
  }

  \node[lbl, anchor=south west] at (0.04,0.04) {$1$};
  \node[lbl, anchor=south west] at (1.04,0.04) {$2$};
  \node[lbl, anchor=south west] at (2.04,0.04) {$3$};
  \node[lbl, anchor=south west] at (3.04,0.04) {$1$};

  \node[lbl, anchor=south west] at (0.04,1.04) {$7$};
  \node[lbl, anchor=south west] at (1.04,1.04) {$8$};
  \node[lbl, anchor=south west] at (2.04,1.04) {$9$};
  \node[lbl, anchor=south west] at (3.04,1.04) {$7$};

  \node[lbl, anchor=south west] at (0.04,2.04) {$4$};
  \node[lbl, anchor=south west] at (1.04,2.04) {$5$};
  \node[lbl, anchor=south west] at (2.04,2.04) {$6$};
  \node[lbl, anchor=south west] at (3.04,2.04) {$4$};

  \node[lbl, anchor=south west] at (0.04,3.04) {$1$};
  \node[lbl, anchor=south west] at (1.04,3.04) {$2$};
  \node[lbl, anchor=south west] at (2.04,3.04) {$3$};
  \node[lbl, anchor=south west] at (3.04,3.04) {$1$};
\end{tikzpicture}
\end{center}
It is not flag.
For example, the thick curve represents the nonfacial $3$-cycle $C=\{7,8,9\}$.
On the one hand, the nonfacial $3$-cycles of $T_{3,3}$
are essential curves on the torus, and hence none of them is separating.
Therefore, by Theorem~\ref{thm:minimal}, for any triangle
$F^*\in (T_{3,3})_2$, the hypergraph
$(V(T_{3,3}),(T_{3,3})_2\setminus\{F^*\})$
is induced-minimal non-point.

On the other hand, $(V(T_{3,3}),(T_{3,3})_2)$
is a point hypergraph. Indeed, 
\[
(T_{3,3})_2=
\{
145, 125, 256, 236, 346, 134,
478, 458, 589, 569, 679, 467,
127, 278, 238, 389, 139, 179
\}
\]
is realized by the following parameters:
\[
\begin{array}{c|ccccccccc}
a_{ij} & 1&2&3&4&5&6&7&8&9\\
\hline\\[-1em]
1&1&2^{-1}&2^{-1}&2^{-1}&2^{-1}&-1&2^{-1}&-1&1\\
2&2&1&2^{-1}&-1&1&2^{-1}&1&1&-1\\
3&2&2&1&1&-1&1&-1&2&2\\
4&2&-1&1&1&1&1&2^{-1}&2^{-1}&-1\\
5&2&1&-1&1&1&2^{-1}&-1&2^{-1}&2^{-1}\\
6&-1&2&1&1&2&1&2^{-1}&-1&1\\
7&2&1&-1&2&-1&2&1&1&2\\
8&-1&1&2^{-1}&2&2&-1&1&1&1\\
9&1&-1&2^{-1}&-1&2&1&2^{-1}&1&1
\end{array}
\]
\end{ex}

We now show that induced-minimal non-point hypergraphs can have arbitrarily large size.
Thus the obstructions to being a point hypergraph are not bounded in size.

\begin{cor}\label{cor:large}
For every integer $\ell$, there exists an induced-minimal non-point hypergraph
with more than $\ell$ vertices.
\end{cor}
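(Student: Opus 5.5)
The plan is to apply Corollary~\ref{cor:flag-minimal} (or Theorem~\ref{thm:minimal}) to a family of triangulations with arbitrarily many vertices and no separating nonfacial $3$-cycles. Fix $\ell$. It suffices to exhibit a finite flag triangulation $K$ of a connected orientable closed surface with $|V(K)|>\ell$. Then, for any $F^*\in K_2$, Corollary~\ref{cor:flag-minimal} shows that $H=(V(K),K_2\setminus\{F^*\})$ is induced-minimal non-point, and its vertex set is $V(K)$, so it has more than $\ell$ vertices.

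The construction I would use is a triangulation of $S^2$, namely the boundary of the bipyramid over an $m$-gon with $m\ge 4$. Its vertices are the cycle $c_1,\dots,c_m$ together with two apexes $u,v$, and its triangles are $u c_i c_{i+1}$ and $v c_i c_{i+1}$, with indices taken mod $m$. This complex has $m+2$ vertices. To see that it is flag, I would show it has no nonfacial $3$-cycles, which suffices by Remark~\ref{rem:flag} because a triangulated closed surface has no empty simplices in higher dimensions. The edges are $uc_i$, $vc_i$, and $c_ic_{i+1}$, while $u$ and $v$ are not adjacent. A $3$-cycle therefore contains at most one apex. If it contains an apex, its other two vertices are adjacent cycle vertices, so the triple is a face. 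If it contains no apex, it is a triangle inside the $m$-cycle, which is impossible for $m\ge4$. Hence $K$ is flag, and choosing $m>\ell$ completes the argument.

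Alternatively, I would invoke Theorem~\ref{thm:minimal} directly, condition (4), using the torus triangulations $T_{m,m}$ for large $m$, which have no $3$-cycles except faces. The sphere example is simpler, however, so I would use it.

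The only point needing care is the verification that the bipyramid has no nonfacial $3$-cycles, in particular checking that $u$ and $v$ are not adjacent and that $m\ge4$ rules out a triangle among the $c_i$. This is routine, so I expect no real obstacle.
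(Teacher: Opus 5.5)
Your proposal is correct and is essentially the paper's own proof: the paper also takes the $n$-gonal bipyramid with $n\ge4$ and $n+2>\ell$, observes that it is flag because the base cycle has no diagonals and the apexes are not adjacent, and applies Corollary~\ref{cor:flag-minimal}. Strictly, Theorem~\ref{thm:minimal}\,(4) only requires the absence of (separating) nonfacial $3$-cycles, which is exactly what you verify, so the flagness detour is not needed; just take $m\ge\max\{4,\ell+1\}$.
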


\begin{proof}
For each $n\ge4$, let $K_n$ be the $n$-gonal bipyramid, with vertices $u,v,0,1,\ldots,n-1$ and triangles
$\{u,i,i+1\}$ and $\{v,i,i+1\}$ for $i\in\mathbb Z/n\mathbb Z$.
Then $K_n$ is a flag triangulation of $S^2$.
Indeed, since the base cycle has no diagonals and $u$ and $v$ are not joined by an edge, every minimal non-face has cardinality two.

Choose $n$ with $n+2>\ell$.
For any triangle $F^*\in (K_n)_2$, the hypergraph
\[
H_n=(V(K_n),(K_n)_2\setminus\{F^*\})
\]
is induced-minimal non-point by Corollary~\ref{cor:flag-minimal}.
Since $|V(K_n)|=n+2>\ell$, this gives the desired example.
\end{proof}

This shows that being a point hypergraph cannot be characterized by a finite list of forbidden induced sub-hypergraphs.

\begin{cor}\label{cor:forbidden}
There is no finite family $\mathcal F$ of $3$-uniform hypergraphs such that a $3$-uniform hypergraph is a point hypergraph if and only if it contains no member of $\mathcal F$ as an induced sub-hypergraph.
\end{cor}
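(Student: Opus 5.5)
We argue by contradiction. Suppose such a finite family $\mathcal F$ exists, and let $\ell$ be the maximum number of vertices of a member of $\mathcal F$; this maximum exists because $\mathcal F$ is finite. By Corollary~\ref{cor:large}, there is an induced-minimal non-point hypergraph $H$ with more than $\ell$ vertices, for instance $H_n$ for the $n$-gonal bipyramid with $n+2>\ell$.

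Since $H$ is non-point, the assumed characterization says that $H$ contains some member $F\in\mathcal F$ as an induced sub-hypergraph. That is, $F\cong H|_U$ for some $U\subset V(H)$.

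Next we observe that each member of $\mathcal F$ is itself non-point. Indeed, $F$ contains itself as an induced sub-hypergraph, so by the characterization $F$ cannot be a point hypergraph. Hence $H|_U$ is non-point.

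On the other hand, $|U|=|V(F)|\le\ell<|V(H)|$, so $U\subsetneq V(H)$ and $H|_U$ is a proper induced sub-hypergraph of $H$. By Definition~\ref{def:induced-minimal-non-point}, $H|_U$ is then a point hypergraph. This contradicts the previous paragraph, so no such finite family $\mathcal F$ exists.

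The argument is purely formal once Corollary~\ref{cor:large} is available. The only step needing care is noting that every member of $\mathcal F$ must be non-point, which follows from the reflexivity of the induced sub-hypergraph relation. We also use that isomorphism preserves the point property, since the realizing parameters $a_{ij}$ can be transported along the vertex bijection.
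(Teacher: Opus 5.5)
Your proof is correct and follows the same route as the paper: an induced-minimal non-point hypergraph must contain some member of $\mathcal F$ as an induced sub-hypergraph, and that copy cannot be proper. Hence the vertex counts of induced-minimal non-point hypergraphs are bounded by those of $\mathcal F$, contradicting Corollary~\ref{cor:large}. You also spell out two steps the paper leaves implicit: each member of $\mathcal F$ is non-point because it contains itself as an induced sub-hypergraph, and the point property is invariant under isomorphism.
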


\begin{proof}
Suppose that such a finite family $\mathcal F$ exists.
Let $H$ be an induced-minimal non-point hypergraph.
Then $H$ contains an induced sub-hypergraph isomorphic to some
$F\in\mathcal F$.
Since $F$ is non-point and $H$ is induced-minimal non-point, this induced sub-hypergraph cannot be proper.
Hence $H\cong F$.
Thus the sizes of induced-minimal non-point hypergraphs are bounded by $\max\{|V(F)|\mid F\in\mathcal F\}$, contradicting
Corollary~\ref{cor:large}.
\end{proof}

\begin{rem}\label{rem:both-deletions}
When $K$ has a separating nonfacial $3$-cycle, both point and non-point
hypergraphs can occur among $H_D$ with $|D|\ge2$.
For example, let $K$ be the boundary of the triangular bipyramid with vertex set
$\{u,v,a,b,c\}$, where $u$ and $v$ are the two apices.
Then $\{a,b,c\}$ is a separating nonfacial $3$-cycle.
For $D=\{\{u,a,b\},\{u,a,c\}\}$,
the hypergraph $H_D$ is non-point, whereas for
$D=\{\{u,a,b\},\{v,a,b\}\}$,
the hypergraph $H_D$ is a point hypergraph.
(Both assertions follow from Theorem~\ref{thm:five} below by checking the tetrahedron condition.)
\end{rem}

\section{Moduli of skew polynomial algebras realizing a point hypergraph}

In this section, we study moduli spaces of skew polynomial algebras realizing
a fixed point hypergraph.  We use the classical language of affine varieties.
An affine variety over $\kk$ means a Zariski closed subset of some affine space over $\kk$; irreducibility is not assumed.
The points of such a variety are understood to be closed points.
Thus no generic points from scheme theory are involved.

Let $H=(V,\mathcal H)$ be a $3$-uniform hypergraph.
We use the notation from Section~\ref{sec:chain}.
After choosing a total order on $V$, we have an isomorphism of affine algebraic groups
\[
C^1(V;\kk^\times)\to (\kk^\times)^{\binom{|V|}{2}};
\quad \varphi \mapsto (\varphi(i,j)\mid i<j).
\]

Recall that $G_H=C_1(V;\mathbb Z)/L_H$, where $L_H=\langle \partial\vv{T}\mid T\in\mathcal H\rangle_{\mathbb Z}$.
The quotient map $C_1(V;\mathbb Z) \to G_H$ induces an embedding
\[
X_H:=\operatorname{Hom}(G_H,\kk^\times)
\hookrightarrow
C^1(V;\kk^\times).
\]
Its image is
$X_H=\{ \varphi\in C^1(V;\kk^\times) \mid \varphi(\partial\vv{T})=1
\text{ for all }T\in\mathcal H\}$.
Since the functions
$C^1(V;\kk^\times) \to \kk^\times; \varphi\mapsto\varphi(\partial\vv{T})$
are regular group homomorphisms, $X_H$ is a closed subgroup of
$C^1(V;\kk^\times)$, and hence an affine algebraic group.

Note that, with $a_{ij}=\varphi(i,j)$ for $i<j$,
the face conditions become $a_{ij}a_{jk}-a_{ik}=0$
for $i<j<k$ with $\{i,j,k\}\in\mathcal H$.
Viewing these conditions as binomial equations,
we can interpret the description of $X_H$ in terms of $G_H$,
including the computation of its dimension below,
as a special case of the theory of Laurent binomial ideals;
see \cite[Section~2]{ES}.

For each non-face $N\in\binom{V}{3}\setminus\mathcal H$,
define the regular group homomorphism
\[
\chi_N:X_H\to\kk^\times;
\quad
\rho \mapsto \rho([\partial\vv{N}]_H).
\]
We define
\[
\mathcal R_H
:=
\left\{
\rho\in X_H
\ \middle|\
\chi_N(\rho)\ne1
\text{ for all }N\in\binom{V}{3}\setminus\mathcal H
\right\}.
\]
Then $\mathcal R_H =
\{\rho\in X_H\mid F_H(\rho)\ne0\}$, where $F_H=
\prod_{N\in\binom{V}{3}\setminus\mathcal H}
(\chi_N-1)$.
Thus $\mathcal R_H$ is the principal open subset of $X_H$ defined by
$F_H \ne 0$.
Since a principal open subset of an affine variety is affine,
$\mathcal R_H$ is an affine variety.

Recall that, for a skew polynomial algebra $S$, we write $H(S)$ for the
$3$-uniform hypergraph associated to the coordinate $2$-planes contained in the point scheme $\operatorname{Pts}(S)$.

\begin{prop}\label{prop:realization}
Let $H=(V,\mathcal H)$ be a $3$-uniform hypergraph.
Then there exists a bijection
\[
\mathcal R_H
\longleftrightarrow
\left\{
\begin{array}{c}
\text{skew polynomial algebras $S$ with variables $x_i$ $(i\in V)$}\\
\text{such that $H(S)=H$}
\end{array}
\right\}.
\]
\end{prop}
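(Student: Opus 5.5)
The plan is to write down the map explicitly and check that it is well defined, injective and surjective, mostly by unwinding definitions. The key point is that a skew polynomial algebra with variables $x_i$ ($i\in V$) is specified by its parameter family $(a_{ij})$. Here $a_{ii}=1$ and $a_{ji}=a_{ij}^{-1}$. Conversely, the family is recovered from the algebra, since the relations $x_ix_j-a_{ij}x_jx_i$ for $i\neq j$ are the only quadratic relations: the degree-two component of the relation space is spanned by them, and $a_{ij}$ is read off as the coefficient. So these algebras are in bijection with $C^1(V;\kk^\times)$, via $\varphi\mapsto S_{\varphi}$ with $a_{ij}=\varphi(i,j)$. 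This is exactly the correspondence used in the proof of Lemma~\ref{lem:cochain}.

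Given $\rho\in\mathcal R_H$, I would regard it as an element $\varphi=\rho\circ\pi$ of $C^1(V;\kk^\times)$ through the embedding $X_H\hookrightarrow C^1(V;\kk^\times)$, and send it to the algebra $S_\varphi$. I would then compute $H(S_\varphi)$ using \eqref{eq.pointdef}. A triple $F$ lies in $\mathcal H(S_\varphi)$ if and only if $a_{ij}a_{jk}a_{ki}=1$, which by \eqref{eq:dd} means $\varphi(\partial\vv F)=1$.

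There are two cases for $F$.
\begin{itemize}
\item If $F\in\mathcal H$, then $\varphi(\partial\vv F)=1$, because $\rho\in X_H$ means $\varphi$ vanishes on $L_H$.
\item If $F\notin\mathcal H$, then $\varphi(\partial\vv F)=\chi_F(\rho)\neq 1$ by the definition of $\mathcal R_H$.
\end{itemize}
Hence $H(S_\varphi)=H$, and the map lands in the right-hand side. Injectivity follows because $X_H\to C^1(V;\kk^\times)$ is injective and $\varphi\mapsto S_\varphi$ is injective.

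For surjectivity, take $S$ with $H(S)=H$ and let $\varphi$ be its parameter cochain. For every face $T\in\mathcal H$, the condition $H(S)=H$ gives $\varphi(\partial\vv T)=1$. So $\varphi$ kills $L_H$, factors through $G_H$, and hence lies in $X_H$. For every non-face $N$, we get $\chi_N(\varphi)=\varphi(\partial\vv N)\neq1$, so $\varphi\in\mathcal R_H$. This argument is essentially the proof of Theorem~\ref{thm:char}.

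The only step that needs any care is the identification of algebras with parameter families, i.e.\ that distinct parameter families give distinct algebras. I would handle it by noting that for $i<j$ the elements $x_ix_j-a_{ij}x_jx_i$ are the unique elements of the relation ideal in degree two of the form $x_ix_j-c\,x_jx_i$. The other degree-two relations involve different monomials, so $a_{ij}$ is determined by the ideal. Here "skew polynomial algebras with variables $x_i$" is understood as presentations, i.e.\ quotients of the fixed free algebra, and not isomorphism classes, since graded isomorphism is treated later.
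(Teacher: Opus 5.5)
Your proposal is correct and follows the paper's proof. Both send $\rho$ to the algebra with parameters $\rho(i,j)$, check $H(S_\rho)=H$ by separating faces from non-faces, and recover $\rho$ from a realizing algebra by factoring its parameter cochain through $G_H$. Your extra remark, that the $a_{ij}$ are recovered from the degree-two part of the relation ideal, makes explicit what the paper leaves implicit in ``the assignments are inverse to each other.''
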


\begin{proof}
Let $\rho\in\mathcal R_H$.
Then $\rho\in X_H$, and we regard it as an element of $C^1(V;\kk^\times)$ via the embedding $X_H\hookrightarrow C^1(V;\kk^\times)$.
Since $\rho\in\mathcal R_H$, we have $\rho([\partial\vv{N}]_H)\ne 1$ for every non-face $N\in\binom{V}{3}\setminus\mathcal H$.
We set $r_{ij}= \rho(i,j)$ for distinct $i,j\in V$ and $r_{ii}=1$ for $i \in V$. Define
\[
S_\rho
=\kk\langle x_i\mid i\in V\rangle/(x_ix_j-r_{ij}x_jx_i \mid i,j\in V).
\]
For $T\in\binom{V}{3}$ with $\vv{T}=(i,j,k)$, we have
\[
T\in \mathcal H(S_\rho)
\quad\Longleftrightarrow\quad
r_{ij}r_{jk}r_{ki}=1
\quad\Longleftrightarrow\quad
\rho(\partial\vv{T})=1.
\]
On the other hand, if $T\in\mathcal H$, then $\rho(\partial\vv{T})=1$ because $\rho\in X_H$.  If $T\notin\mathcal H$, then
$\rho(\partial\vv{T})
=\rho([\partial\vv{T}]_H)
=\chi_T(\rho)
\ne 1$
because $\rho\in\mathcal R_H$.
Hence $H(S_\rho)=H$.

Conversely, let
$S_a=\kk\langle x_i\mid i\in V\rangle/
(x_ix_j-a_{ij}x_jx_i \mid i,j\in V,\ i\ne j)$
be a skew polynomial algebra with $H(S_a)=H$.
Define $\varphi_a\in C^1(V;\kk^\times)$ by $\varphi_a(i,j)=a_{ij}$.
For $T\in\mathcal H =\mathcal H(S_a)$, we have $\varphi_a(\partial\vv{T})=1$. 
This says that $\varphi_a$ is trivial on $L_H$, so it factors
uniquely through $G_H$.
Thus there exists a unique homomorphism
$\rho_a:G_H\to\kk^\times$
such that
\[
\rho_a([c]_H)=\varphi_a(c)
\]
for $c\in C_1(V;\mathbb Z)$.
Furthermore for $N\in\binom{V}{3}\setminus\mathcal H=\binom{V}{3}\setminus\mathcal H(S_a)$, we have
$\varphi_a(\partial\vv{N})\ne1$.
This says that
$\chi_N(\rho_a)\ne1$ for all $N\in\binom{V}{3}\setminus\mathcal H$.
Hence $\rho_a\in\mathcal R_H$.

The assignments
$\rho\mapsto S_\rho$ and $\rho_a \mapsfrom S_a$
are inverse to each other.
Hence the desired bijection follows.
\end{proof}

We next construct a moduli space whose points correspond to graded
isomorphism classes of skew polynomial algebras realizing $H$.
We use the following known criterion.

\begin{prop}[{\cite[Lemma 2.3]{Vi}, \cite[Theorem 2.4]{Ga}}]\label{prop.skewiso}
Let $S_{a}$ and $S_{a'}$ be skew polynomial algebras in $n$ variables generated in degree $1$.
Then $S_{a} \cong S_{a'}$ as graded algebras if and only if  there exists a permutation $\sigma \in {\mathfrak S}_n$ such that $a'_{ij}=a_{\sigma(i)\sigma(j)}$ for $1 \leq i,j \leq n$ with $i\neq j$. 
\end{prop}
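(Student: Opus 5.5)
The proof has two directions. The reverse direction is an explicit change of generators. The forward direction extracts the permutation from the degree-one part of a graded isomorphism, using the fact that the quadratic relation space of a skew polynomial algebra records which variables commute up to scalar. If $a'_{ij}=a_{\sigma(i)\sigma(j)}$ for all $i\ne j$, then the graded homomorphism $S_{a'}\to S_a$ given by $x_i\mapsto x_{\sigma(i)}$ is well defined. It sends the relation $x_ix_j-a'_{ij}x_jx_i$ to $x_{\sigma(i)}x_{\sigma(j)}-a_{\sigma(i)\sigma(j)}x_{\sigma(j)}x_{\sigma(i)}$, which is a relation of $S_a$. The map $x_j\mapsto x_{\sigma^{-1}(j)}$ gives an inverse in the same way, so $S_a\cong S_{a'}$ as graded algebras.

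For the forward direction, let $f:S_a\to S_{a'}$ be a graded isomorphism. Its degree-one part is an invertible matrix $g\in GL_n(\kk)$ on $V=\bigoplus\kk x_i$, and $g\otimes g$ maps the relation space $R_a\subset V\otimes V$ onto $R_{a'}$. Here $R_a$ is spanned by the $\binom n2$ elements $x_i\otimes x_j-a_{ij}x_j\otimes x_i$ with $i<j$. I would use an intrinsic description of the degree-one elements that are "normal up to rank one" in the relations: for $v\in V$, consider the space
\[
W_a(v)=\{w\in V\mid v\otimes w\in R_a+V\otimes V\text{-span of }w\otimes v\}.
\]
More cleanly, set $\mathcal{C}_a=\{v\in V\setminus 0\mid \dim\{w: v\otimes w\equiv \lambda w\otimes v \bmod R_a\text{ for some }\lambda\}=n\}$. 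The coordinate lines $\kk x_i$ lie in $\mathcal{C}_a$, and the first step is to show they are essentially the only such lines. This is preserved by $g$, so $g$ permutes the coordinate lines. The expected obstacle is that when many $a_{ij}$ coincide, for instance $a_{ij}=1$ in the commutative case, other lines also become normal, and a general linear change of variables is then a genuine isomorphism.

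So I would instead argue via the point scheme and its automorphism, or directly by comparing multiplication in the quotient. In $S_a$ the monomials $x_ix_j$ with $i\le j$ form a basis of $(S_a)_2$. Writing $f(x_k)=\sum_l g_{lk}x_l$, I would impose $f(x_i)f(x_j)=a_{ij}f(x_j)f(x_i)$ in $S_{a'}$. Comparing coefficients of $x_lx_m$ with $l<m$ gives
\[
g_{li}g_{mj}(1-a_{ij}a'_{ml})+g_{mi}g_{lj}(a'_{lm}-a_{ij})=0,
\]
and comparing coefficients of $x_l^2$ gives $g_{li}g_{lj}(1-a_{ij})=0$. Since $g$ is invertible, choose a permutation $\sigma$ with $g_{\sigma(k)k}\ne0$ for all $k$, which exists by expanding the determinant. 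Setting $l=\sigma(i)$ and $m=\sigma(j)$, or the reverse order, the first equation should force $a'_{\sigma(i)\sigma(j)}=a_{ij}$ whenever the cross term $g_{mi}g_{lj}$ vanishes.

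The main obstacle is the case where the cross terms do not vanish. My plan there is to pick $\sigma$ as a permutation maximizing a suitable weight, or to argue that in that case $a_{ij}$ is constrained enough that relabelling still works. This is the delicate combinatorial step. Since the statement is cited from \cite{Vi} and \cite{Ga}, I would ultimately defer to those sources for it, and use it here only as a black box for quotienting $\mathcal R_H$ by $\mathfrak S_n$.
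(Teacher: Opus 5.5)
Your reverse direction is correct. The paper gives no proof of this proposition; it quotes it from Vit\'oria and Gaddis, so ending with a deferral matches the paper. As a proof, though, your forward direction stops exactly at the step that carries the content, and ``pick $\sigma$ maximizing a suitable weight'' is not an argument. The obstacle you describe is not really there. You write down the $x_l^2$-equations but never use them, and you use invertibility of $g$ only to choose $\sigma$. There is also an index slip: comparing coefficients of $x_lx_m$ gives
\[
g_{li}g_{mj}(1-a_{ij}a'_{ml})+g_{mi}g_{lj}(a'_{ml}-a_{ij})=0,
\]
with $a'_{ml}$ in both terms. Swapping $l\leftrightarrow m$ only multiplies this by $a'_{lm}$, so it holds for all $l\ne m$.

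Here is the missing step. Take any $\sigma$ with $\prod_k g_{\sigma(k)k}\ne0$, fix $i\ne j$, and put $l=\sigma(i)$, $m=\sigma(j)$.

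\emph{Case $a_{ij}\ne1$.} The equations $g_{li}g_{lj}(1-a_{ij})=0$ say that columns $i$ and $j$ of $g$ have disjoint supports. Since $g_{li}\ne0$, this gives $g_{lj}=0$, so the cross term vanishes and $a'_{lm}=a_{ij}$.

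\emph{Case $a_{ij}=1$.} The displayed equation becomes $(1-a'_{ml})(g_{li}g_{mj}-g_{mi}g_{lj})=0$. Suppose $a'_{lm}\ne1$. Then $g_{mi}g_{lj}=g_{li}g_{mj}\ne0$, so both $l$ and $m$ lie in the support of column $i$. Now let $k\ne i$ range over all other columns.
\begin{enumerate}
\item If $a_{ik}\ne1$, disjointness of the supports of columns $i$ and $k$ gives $g_{lk}=g_{mk}=0$.
\item If $a_{ik}=1$ (this includes $k=j$), the same equation for the pair $(i,k)$ gives $g_{li}g_{mk}=g_{mi}g_{lk}$.
\end{enumerate}
Hence $g_{li}\cdot(\text{row } m)=g_{mi}\cdot(\text{row } l)$ with $g_{li}\ne0$, contradicting $\det g\ne0$.

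So every such $\sigma$ satisfies $a'_{\sigma(i)\sigma(j)}=a_{ij}$, and $\sigma^{-1}$ is the permutation required in the statement. You never need $g$ to be monomial. That is why the non-monomial isomorphisms you worried about, such as a $GL_2$-block on two commuting variables, cause no trouble.

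A more conceptual alternative uses maximal tori. The diagonal torus is a maximal torus in both graded automorphism groups. By conjugacy of maximal tori, you can replace $f$ by $h\circ f$ for some graded automorphism $h$ of $S_{a'}$ so that its matrix normalizes the diagonal torus. That matrix is then monomial, and the permutation can be read off directly.
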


Let $H=(V,\mathcal H)$ be a $3$-uniform hypergraph, and set
\[
\Gamma:=\operatorname{Aut}(H)
=\{\sigma\in{\mathfrak S}_V\mid \sigma(\mathcal H)=\mathcal H\}.
\]
For $\sigma\in\Gamma$, define an automorphism
$\sigma_*:C_1(V;\mathbb Z)\to C_1(V;\mathbb Z)$
by
$\sigma_*((i,j))=(\sigma(i),\sigma(j))$.
Since $\sigma(\mathcal H)=\mathcal H$, we have
$\sigma_*(L_H)=L_H$.
Thus $\sigma_*$ induces an automorphism
$\bar\sigma:G_H\to G_H$.
We define an action of $\Gamma$ on
$X_H=\operatorname{Hom}(G_H,\kk^\times)$
by
\[
(\sigma\cdot\rho)(g)=\rho(\bar\sigma^{-1}g)
\]
for $g\in G_H$.
This action preserves $\mathcal R_H$.  Indeed, let
$\rho\in\mathcal R_H$ and let $N\in\binom{V}{3}\setminus\mathcal H$.
Choose an orientation $\vv{N}$ of $N$.  Since $\sigma^{-1}(N)$ is again a
non-face of $H$, we have
$\rho\bigl([\partial(\sigma^{-1}\vv{N})]_H\bigr)\ne1$.
Thus it follows that
\[
(\sigma\cdot\rho)([\partial\vv{N}]_H)
\,=\,\rho\bigl(\bar\sigma^{-1}[\partial\vv{N}]_H\bigr)
\,=\,\rho\bigl([\partial(\sigma^{-1}\vv{N})]_H\bigr)
\,\ne\,1.
\]
Therefore $\sigma\cdot\rho\in\mathcal R_H$.

Let $A_H:=\kk[\mathcal R_H]$ be the coordinate ring of $\mathcal R_H$.
The action of $\Gamma$ on $\mathcal R_H$ induces an action on $A_H$ by
$(\sigma\cdot f)(\rho)=f(\sigma^{-1}\cdot\rho)$,
where $\sigma\in\Gamma,  f\in A_H,  \rho\in\mathcal R_H$.
Let
\[
A_H^\Gamma :=\{f\in A_H \mid \sigma\cdot f=f\text{ for all }\sigma\in\Gamma\}
\]
be the invariant subring.
Since $\Gamma$ is finite, $A_H^\Gamma$ is a finitely generated reduced $\kk$-algebra. We define
\[
\mathcal M_H
:= \operatorname{MaxSpec}(A_H^\Gamma).
\]
Thus $\mathcal M_H$ is an affine variety.
The inclusion $A_H^\Gamma\hookrightarrow A_H$
induces a morphism
\[
q:\mathcal R_H \to \mathcal M_H.
\]
Since $\Gamma$ is finite and $\mathcal R_H$ is affine, this morphism is a geometric quotient by $\Gamma$; see \cite[Example 6.1]{Do}.
Hence, for $\rho,\rho'\in\mathcal R_H$, it follows that
$q(\rho)=q(\rho')$ if and only if $\rho'=\sigma\cdot\rho$ for some $\sigma\in\Gamma$.
Thus the closed points of $\mathcal M_H$ are naturally identified with the $\Gamma$-orbits in $\mathcal R_H$.

\begin{prop}\label{prop:moduli-iso}
Let $H=(V,\mathcal H)$ be a $3$-uniform hypergraph.
Then there is a natural bijection
\[
\mathcal M_H
\longleftrightarrow
\left\{
\begin{array}{c}
\text{graded isomorphism classes of skew polynomial algebras $S$}\\
\text{such that $H(S)=H$}
\end{array}
\right\}.
\]
\end{prop}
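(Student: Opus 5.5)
The plan is to compose the bijection of Proposition~\ref{prop:realization} with the identification of closed points of $\mathcal M_H$ with $\Gamma$-orbits in $\mathcal R_H$, established just before the statement via the geometric quotient $q:\mathcal R_H\to\mathcal M_H$. It therefore suffices to show that, for $\rho,\rho'\in\mathcal R_H$, the algebras $S_\rho$ and $S_{\rho'}$ are isomorphic as graded algebras if and only if $\rho'=\sigma\cdot\rho$ for some $\sigma\in\Gamma$. Granting this, the map $q(\rho)\mapsto[S_\rho]$ is well defined and injective, and it is surjective because every $S$ with $H(S)=H$ is of the form $S_\rho$.

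First, we unwind the action. By definition, $(\sigma\cdot\rho)(i,j)=\rho(\bar\sigma^{-1}[(i,j)]_H)=\rho(\sigma^{-1}(i),\sigma^{-1}(j))$, so in terms of parameters $r'_{ij}=r_{\tau(i)\tau(j)}$ with $\tau=\sigma^{-1}$. Proposition~\ref{prop.skewiso} then shows that $\rho'=\sigma\cdot\rho$ implies $S_{\rho'}\cong S_\rho$ as graded algebras. To apply it, we identify $V$ with $[n]$ via the fixed total order; the criterion is phrased for arbitrary permutations, so it transfers verbatim.

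For the converse, suppose $S_{\rho'}\cong S_\rho$ as graded algebras. Proposition~\ref{prop.skewiso} gives a permutation $\tau\in\mathfrak S_V$ with $r'_{ij}=r_{\tau(i)\tau(j)}$ for all $i\ne j$. The key point is that $\tau$ lies in $\Gamma$. For any triple $T=\{i,j,k\}$ we have $r'_{ij}r'_{jk}r'_{ki}=r_{\tau(i)\tau(j)}r_{\tau(j)\tau(k)}r_{\tau(k)\tau(i)}$. Since both $S_\rho$ and $S_{\rho'}$ have hypergraph $H$ by Proposition~\ref{prop:realization}, this gives $T\in\mathcal H\iff\tau(T)\in\mathcal H$. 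Hence $\tau(\mathcal H)=\mathcal H$, so $\sigma:=\tau^{-1}\in\Gamma$, and then $\rho'=\sigma\cdot\rho$.

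The main obstacle is bookkeeping rather than substance. We must check that the sign and inverse conventions in $(\sigma\cdot\rho)(g)=\rho(\bar\sigma^{-1}g)$ match the permutation convention of Proposition~\ref{prop.skewiso}, including the orientation independence of the conditions on $\rho(\partial\vv{T})$. We must also confirm that "closed points of $\mathcal M_H$ equal $\Gamma$-orbits" is exactly what the geometric quotient provides. Naturality of the bijection is immediate from its construction through $S_\rho$.
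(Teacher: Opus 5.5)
Your proposal is correct and takes essentially the same route as the paper. Both arguments establish the bijection $q(\rho)\mapsto[S_\rho]$ in three steps: it is well defined by Proposition~\ref{prop.skewiso} applied to $r'_{ij}=r_{\sigma^{-1}(i)\sigma^{-1}(j)}$, it is surjective by Proposition~\ref{prop:realization}, and it is injective because the permutation supplied by Proposition~\ref{prop.skewiso} preserves $\mathcal H$ and hence lies in $\Gamma$. The only difference is cosmetic: the paper explicitly derives surjectivity of $q$ from the integrality of $A_H$ over $A_H^\Gamma$, whereas you take it as part of the geometric-quotient identification of points of $\mathcal M_H$ with $\Gamma$-orbits.
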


\begin{proof}
Since $\Gamma$ is finite, $A_H$ is integral over $A_H^\Gamma$.
Hence the morphism $q:\mathcal R_H\to\mathcal M_H$
is surjective.  Therefore every point of $\mathcal M_H$ is of the form
$q(\rho)$ for some $\rho\in\mathcal R_H$.

We now check that $q(\rho) \mapsto [S_\rho]$ gives the desired bijection, 
where $[S_\rho]$ denotes the graded isomorphism class of $S_\rho$.
This is well-defined.  Indeed, suppose that $q(\rho)=q(\rho')$.
Then $\rho'=\sigma\cdot\rho$ for some $\sigma\in\Gamma$.
Let $r_{ij}=\rho(i,j), r'_{ij}=\rho'(i,j)$.
By the definition of the action, we have $r'_{ij}=r_{\sigma^{-1}(i)\sigma^{-1}(j)}$
for distinct $i,j\in V$.
Hence Proposition~\ref{prop.skewiso} gives
$S_\rho\cong S_{\rho'}$
as graded algebras.

The map is surjective by Proposition~\ref{prop:realization}.
It remains to prove injectivity.
Suppose $S_\rho\cong S_{\rho'}$ as graded algebras.
By Proposition~\ref{prop.skewiso}, there exists $\sigma\in{\mathfrak S}_V$
such that
\begin{equation}\label{eq:r'}
r'_{ij}=r_{\sigma(i)\sigma(j)}
\end{equation}
for distinct  $i,j \in V$.
We show that $\sigma\in\Gamma$.
Let $T=\{i,j,k\}\in\binom{V}{3}$ and choose the orientation
$\vv{T}=(i,j,k)$.
Since $H=H(S_\rho)=H(S_{\rho'})$, we have
\begin{align*}
T \in \mathcal H
&\;\;  \Longleftrightarrow \;\;
T \in \mathcal H(S_{\rho'})
\;\;  \Longleftrightarrow \;\;
r'_{ij}r'_{jk}r'_{ki}=1 \\
&\;\;  \overset{\text{\eqref{eq:r'}}}{\Longleftrightarrow} \;\;
r_{\sigma(i)\sigma(j)}r_{\sigma(j)\sigma(k)}r_{\sigma(k)\sigma(i)}=1
\;\;  \Longleftrightarrow \;\;
\sigma(T) \in \mathcal H(S_\rho)
\;\; \Longleftrightarrow \;\;
\sigma(T) \in \mathcal H.
\end{align*}
Thus $\sigma(\mathcal H)=\mathcal H$, so $\sigma\in\Gamma$.
By \eqref{eq:r'}, we have $\rho'(i,j)=\rho(\sigma(i),\sigma(j))$ for distinct $i,j \in V$.
This says that $\rho'=\sigma^{-1}\cdot\rho$.
Since $\sigma^{-1}\in \Gamma$, we obtain $q(\rho)=q(\rho')$.
\end{proof}

\begin{thm}\label{thm:moduli-dim}
Let $H=(V,\mathcal H)$ be a point hypergraph.
Then
\[
\dim \mathcal M_H = \dim \mathcal R_H =\operatorname{rank}_{\mathbb Z}G_H.
\]
\end{thm}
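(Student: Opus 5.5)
We prove the two equalities separately: first $\dim\mathcal M_H=\dim\mathcal R_H$, then $\dim\mathcal R_H=\operatorname{rank}_{\mathbb Z}G_H$.

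\emph{Step 1: $\dim\mathcal M_H=\dim\mathcal R_H$.} We use the quotient morphism $q:\mathcal R_H\to\mathcal M_H$. Since $\Gamma$ is finite, $A_H$ is integral over $A_H^\Gamma$. We also know $A_H^\Gamma\subset A_H$. An integral extension of finitely generated $\kk$-algebras preserves Krull dimension, and the dimension of an affine variety is the Krull dimension of its coordinate ring. This gives the first equality. The dimension is taken as the maximum over components, which is compatible with Krull dimension for reduced rings.

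\emph{Step 2: the structure of $X_H$.} The group $G_H$ is finitely generated abelian, so we write $G_H\cong\mathbb Z^r\oplus T$ with $T$ finite and $r=\operatorname{rank}_{\mathbb Z}G_H$. Then
\[
X_H=\operatorname{Hom}(G_H,\kk^\times)\cong(\kk^\times)^r\times\operatorname{Hom}(T,\kk^\times).
\]
Since $\kk$ is algebraically closed of characteristic $0$, the factor $\operatorname{Hom}(T,\kk^\times)$ is a finite group, of order $|T|$. This isomorphism must be an isomorphism of algebraic groups, not just of abstract groups. To see this, choose a basis of $C_1(V;\mathbb Z)$ adapted to $L_H$ via Smith normal form. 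Then $X_H\subset C^1(V;\kk^\times)\cong(\kk^\times)^{\binom{|V|}{2}}$ is cut out by the equations $t_i^{d_i}=1$ in suitable monomial coordinates, with $t_i$ free for the indices beyond the rank of $L_H$. Monomial changes of coordinates are automorphisms of the torus, so this is legitimate. Consequently $X_H$ is a finite disjoint union of $|T|$ translates of an $r$-dimensional torus, and each irreducible component has dimension $r$.

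\emph{Step 3: $\mathcal R_H$ is nonempty and hence has dimension $r$.} The set $\mathcal R_H$ is a principal open subset of $X_H$, so each of its irreducible components is an open subset of a component of $X_H$. Hence $\dim\mathcal R_H\le r$, with equality as soon as $\mathcal R_H\neq\varnothing$. Nonemptiness is exactly where we use that $H$ is a point hypergraph. By Theorem~\ref{thm:char} there is a homomorphism $\rho\in X_H$ with $\chi_N(\rho)\ne1$ for every non-face $N$, so $\rho\in\mathcal R_H$. The open set $\mathcal R_H$ is therefore a nonempty open subset of the irreducible component of $X_H$ containing $\rho$. A nonempty open subset of an irreducible variety is dense and has the same dimension, so $\dim\mathcal R_H=r$.

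The only slightly delicate point is Step 2: justifying that the abstract isomorphism $\operatorname{Hom}(G_H,\kk^\times)\cong(\kk^\times)^r\times\text{finite}$ is an isomorphism of varieties. We would handle this with the Smith normal form argument above, or simply cite the theory of Laurent binomial ideals \cite[Section~2]{ES}. Everything else is formal.
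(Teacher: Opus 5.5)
Your proposal is correct and follows the same route as the paper. You decompose $G_H\cong\mathbb Z^r\oplus T$ to see that $X_H$ is a finite union of $r$-dimensional tori, use that the principal open subset $\mathcal R_H$ is nonempty (because $H$ is a point hypergraph) to get $\dim\mathcal R_H=r$, and use integrality of $A_H$ over $A_H^\Gamma$ to get $\dim\mathcal M_H=\dim\mathcal R_H$; your Smith normal form argument also makes explicit why the decomposition is an isomorphism of varieties, which the paper simply asserts.
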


\begin{proof}
Since $G_H$ is a finitely generated abelian group, we have an
isomorphism
$G_H \cong \mathbb Z^r\oplus T$,
where $r=\operatorname{rank}_{\mathbb Z}G_H$ and $T$ is a finite abelian group.
This gives an isomorphism of affine varieties
\[
X_H=\operatorname{Hom}(G_H,\kk^\times)
\cong
\operatorname{Hom}(\mathbb Z^r,\kk^\times)
\times \operatorname{Hom}(T,\kk^\times).
\]
Since $\operatorname{Hom}(\mathbb Z^r,\kk^\times)\cong(\kk^\times)^r$
and $\operatorname{Hom}(T,\kk^\times)$ is finite,
$X_H$ is a finite union of $r$-dimensional algebraic tori.
Hence $\dim X_H=r$.

Since $H$ is a point hypergraph, $\mathcal R_H$ is a nonempty principal open subset of $X_H$, so $\dim \mathcal R_H=\dim X_H=r$.

Finally, recall that $\mathcal M_H=\operatorname{MaxSpec}(A_H^\Gamma)$,
where $A_H=\kk[\mathcal R_H]$ and $\Gamma=\operatorname{Aut}(H)$.
Since $\Gamma$ is finite, $A_H$ is integral over $A_H^\Gamma$.
Thus $A_H$ and $A_H^\Gamma$ have the same Krull dimension. Hence $\dim \mathcal M_H=\dim \mathcal R_H=r$.
This proves the claim.
\end{proof}

We now apply Theorem~\ref{thm:moduli-dim} to point hypergraphs arising from
triangulations of orientable closed surfaces and determine the dimensions of
their moduli spaces explicitly.

\begin{thm}\label{thm:surface-moduli}
Let $K$ be a finite triangulation of a connected orientable
closed surface of Euler characteristic $\chi$.
Let $D\subset K_2$, set $d=|D|$, and suppose that
$H_D=(V(K),K_2\setminus D)$ is a point hypergraph.
Then $d\ne1$ and
\[
\dim \mathcal M_{H_D}
=
\begin{cases}
\displaystyle \binom{|V(K)|}{2}-2|V(K)|+2\chi+1, & d=0,\\[2mm]
\displaystyle \binom{|V(K)|}{2}-2|V(K)|+2\chi+d, & d\ge2.
\end{cases}
\]
\end{thm}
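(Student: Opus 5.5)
By Theorem~\ref{thm:surface-obst}, $H_{\{F^*\}}$ is never a point hypergraph, so $d\ne1$. By Theorem~\ref{thm:moduli-dim}, it then suffices to compute $\operatorname{rank}_{\mathbb Z}G_{H_D}$. Since $C_1(V;\mathbb Z)$ is free of rank $\binom{|V(K)|}{2}$, we have
\[
\operatorname{rank}G_{H_D}=\binom{|V(K)|}{2}-\operatorname{rank}L_{H_D}.
\]
Moreover, $L_{H_D}$ is the image under $\partial$ of the free group $\mathbb Z^{K_2\setminus D}$ spanned by the oriented faces. Hence
\[
\operatorname{rank}L_{H_D}=|K_2|-d-\operatorname{rank}\bigl(\ker\partial|_{\mathbb Z^{K_2\setminus D}}\bigr).
\]
So the whole computation reduces to finding the rank of the space of rational $2$-cycles supported on $K_2\setminus D$.

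Every such cycle is a $2$-cycle on $K$, and the argument in the proof of Theorem~\ref{thm:minimal} determines all of them. If $z=\sum a_T\vv T$ satisfies $\partial z=0$, then triangles sharing an edge have $a_T=\pm a_{T'}$. Since the dual graph is connected, $z$ is a rational multiple of the fundamental cycle $Z=\sum_{T\in K_2}\vv T$, taken with the compatible orientation. Here I will double-check that the sign relation is exactly the one forcing $z\in\mathbb Q Z$; it is, because compatible orientations give opposite edge signs. Consequently the kernel has rank $1$ if $d=0$ and rank $0$ if $d\ge1$, since $Z$ has full support. This gives $\operatorname{rank}L_{H_D}=|K_2|-1$ for $d=0$ and $|K_2|-d$ for $d\ge2$.

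It remains to express $|K_2|$ through $|V(K)|$ and $\chi$. For a closed surface triangulation, $3|K_2|=2|K_1|$, and $\chi=|V|-|K_1|+|K_2|$, so $|K_2|=2|V|-2\chi$. Substituting yields
\[
\operatorname{rank}G_{H_D}=\binom{|V|}{2}-2|V|+2\chi+1 \quad (d=0),
\]
and
\[
\operatorname{rank}G_{H_D}=\binom{|V|}{2}-2|V|+2\chi+d \quad (d\ge2),
\]
as claimed. The only step requiring care is identifying the kernel of $\partial$ on the face chains. This is routine given the adjacency argument already used in the proof of Theorem~\ref{thm:minimal}, applied over $\mathbb Q$, which suffices since only ranks matter.
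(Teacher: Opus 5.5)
Your proposal is correct and follows essentially the same route as the paper. You get $d\ne1$ from Theorem~\ref{thm:surface-obst}, use $\dim\mathcal M_{H_D}=\operatorname{rank}_{\mathbb Z}G_{H_D}=\binom{|V(K)|}{2}-\operatorname{rank}L_{H_D}$, observe that every rational $2$-cycle on $K$ is a multiple of the fundamental cycle with full support, and use $|K_2|=2|V(K)|-2\chi$. The only difference is cosmetic: you derive $\operatorname{Ker}\partial_K=\mathbb Q Z$ directly from the edge-adjacency argument with compatible orientations, whereas the paper cites $H_2(K;\mathbb Q)\cong\mathbb Q$.
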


\begin{proof}
Put $f_i=|K_i|$.
Since $H_D$ is a point hypergraph, Theorem~\ref{thm:moduli-dim} yields
$\dim \mathcal M_{H_D}
=\operatorname{rank}_{\mathbb Z}G_{H_D}$.
Moreover, $d\ne1$ by Theorem~\ref{thm:surface-obst}.

Since $G_{H_D}=C_1(V(K);\mathbb Z)/L_{H_D}$ and 
$\operatorname{rank}_{\mathbb Z} C_1(V(K);\mathbb Z) = \binom{f_0}{2}$, we have
\[
\operatorname{rank}_{\mathbb Z}G_{H_D}
=\binom{f_0}{2}-\operatorname{rank}_{\mathbb Z}L_{H_D}.
\]
Let $\partial_K:C_2(K;\mathbb Q)\to C_1(K;\mathbb Q)$ be the simplicial boundary map of $K$.

First, suppose that $d=0$.  Then $H_D=(V(K),K_2)$ and
$L_{H_D}=\langle \partial \vv{T} \mid T\in K_2 \rangle_{\mathbb Z}$, so we have
\[
\operatorname{rank}_{\mathbb Z}L_{H_D}
\;=\; \dim_{\mathbb Q} \langle \partial \vv{T} \mid T\in K_2 \rangle_{\mathbb Q}
\;=\; \dim_{\mathbb Q} \operatorname{Im} \partial_K
\;=\; \dim_{\mathbb Q}C_2(K;\mathbb Q)-\dim_{\mathbb Q} \operatorname{Ker} \partial_K.
\]
Since $K$ is a connected orientable closed surface, we have
$\operatorname{Ker} \partial_K=H_2(K;\mathbb Q)\cong\mathbb Q$.
Moreover $\dim_{\mathbb Q}C_2(K;\mathbb Q)=f_2$, so it follows that
\[
\operatorname{rank}_{\mathbb Z}L_{H_D}=f_2-1.
\]

Next, suppose that $d\ge2$.  Since
$L_{H_D}=\langle \partial\vv{T}\mid T\in K_2\setminus D\rangle_{\mathbb Z}$, we have
\[
\operatorname{rank}_{\mathbb Z}L_{H_D}
=
\dim_{\mathbb Q}
\langle \partial\vv{T}\mid T\in K_2\setminus D\rangle_{\mathbb Q}.
\]
We claim that the vectors
$\{\partial\vv{T}\mid T\in K_2\setminus D\}$
are linearly independent over $\mathbb Q$.
Indeed, suppose that
$\sum_{T\in K_2\setminus D}\lambda_T\partial\vv{T}=0$
for some $\lambda_T\in\mathbb Q$, and set
$z=\sum_{T\in K_2\setminus D}\lambda_T\vv{T}$.
Then $z\in\operatorname{Ker}\partial_K$.
As above, $\operatorname{Ker}\partial_K=H_2(K;\mathbb Q)\cong\mathbb Q$,
and this kernel is generated by a fundamental $2$-cycle with support $K_2$.
Since $z$ is supported on $K_2\setminus D\subsetneq K_2$, it follows that
$z=0$, and hence $\lambda_T=0$ for every $T\in K_2\setminus D$.
Thus the vectors are linearly independent, and it follows that
\[
\operatorname{rank}_{\mathbb Z}L_{H_D}=f_2-d.
\]

Finally, each edge of $K$ is contained in exactly two triangles, so
$3f_2=2f_1$.
Combining this with the Euler formula $f_0-f_1+f_2=\chi$, we get $f_2=2f_0-2\chi$.
Therefore
\[
\dim\mathcal M_{H_D}
=\operatorname{rank}_{\mathbb Z}G_{H_D}
=
\begin{cases}
\displaystyle
\binom{f_0}{2}-f_2+1
=\binom{f_0}{2}-2f_0+2\chi+1,
& d=0,\\[2mm]
\displaystyle
\binom{f_0}{2}-f_2+d
=\binom{f_0}{2}-2f_0+2\chi+d,
& d\ge2.
\end{cases}
\]
Hence the assertion follows.
\end{proof}

Note that the Euler characteristic of a connected orientable closed surface of genus $g$ is given by $\chi=2-2g$.

\begin{ex}\label{ex:octahedral-moduli} 
Let $K$ be the boundary complex of the octahedron.
We label the vertices so that the pairs of opposite vertices are
$\{1,6\}$, $\{2,5\}$, and $\{3,4\}$.
Thus
\[
K_2=\{123,124,135,145,236,246,356,456\}.
\]
Set $H^{+}=(V(K),K_2)$.
By Proposition~\ref{prop:realization}, a point of $\mathcal R_{H^{+}}$
corresponds to a skew polynomial algebra
\[
S_a=\kk\langle x_1,\dots,x_6\rangle/ (x_ix_j-a_{ij}x_jx_i \mid 1\le i, j\le 6)
\]
with $H(S_a)=H^{+}$.  Note that $a_{ji}=a_{ij}^{-1}$ and $a_{ii}=1$.
Take
\[
a=a_{12},\quad b=a_{13},\quad c=a_{14},\quad d=a_{15},\quad e=a_{26}.
\]
Then the face conditions $a_{ij}a_{jk}a_{ki}=1$ determine the following parameters:
\[
a_{23}=\frac ba,\quad
a_{24}=\frac ca,\quad
a_{35}=\frac db,\quad
a_{45}=\frac dc,\quad
a_{36}=\frac{ae}{b},\quad
a_{46}=\frac{ae}{c},\quad
a_{56}=\frac{ae}{d}.
\]
Since $16, 25, 34$ are non-edges of $K$, the three parameters
$a_{16}, a_{25}, a_{34}$ are free before imposing the non-face conditions.
We write
\[
a_{16}=\alpha ae,\qquad
a_{25}=\frac{\beta d}{a},\qquad
a_{34}=\frac{\gamma c}{b}.
\]
Then the twelve non-face conditions reduce to
\[
\alpha\ne1,\qquad \beta\ne1,\;\;\text{ and }\;\; \gamma\ne1.
\] 
Thus a point of $\mathcal R_{H^{+}}$ is uniquely determined by a tuple
$(a,b,c,d,e,\alpha,\beta,\gamma)\in(\kk^\times)^8$ satisfying
$\alpha,\beta,\gamma\ne1$.  Conversely, any such tuple gives parameters
$a_{ij}$ satisfying exactly the required face and non-face conditions.
Therefore
\[
\mathcal R_{H^{+}}
\cong
(\kk^\times)^5\times(\kk^\times\setminus\{1\})^3,
\]
and hence $\dim \mathcal R_{H^{+}}=8$.

Now we look at the corresponding moduli space.
The automorphism group of the octahedral boundary is the group of
permutations preserving the set of the three pairs of opposite vertices
$\{1,6\}$, $\{2,5\}$, and $\{3,4\}$.
Hence
$\operatorname{Aut}(H^{+})
\cong
(\mathbb Z/2\mathbb Z)^3\rtimes{\mathfrak S}_3$
and
$|\operatorname{Aut}(H^{+})|=48$.
The quotient map
$q:\mathcal R_{H^{+}}\to\mathcal M_{H^{+}}$
sends two points $\rho,\rho'\in\mathcal R_{H^{+}}$ to the same point of
$\mathcal M_{H^{+}}$ if and only if
$\rho'=\sigma\cdot\rho$ for some $\sigma\in\operatorname{Aut}(H^{+})$.
Thus, for a general point $\rho\in\mathcal R_{H^{+}}$, the fiber
$q^{-1}(q(\rho))$ has $48$ points.
If $\rho$ has nontrivial stabilizer in $\operatorname{Aut}(H^{+})$, then this
fiber has fewer points.
Since $\operatorname{Aut}(H^{+})$ is finite, the quotient does not change the
dimension.  Therefore
\[
\dim \mathcal M_{H^{+}}
=\dim \mathcal R_{H^{+}}
=8.
\]
This agrees with Theorem~\ref{thm:surface-moduli}, since $\chi=2$, $d=0$, and
$\binom{6}{2}-2\cdot6+4+1=8$.
\end{ex}

\section{The six-vertex case}

In this section, we characterize which $3$-uniform hypergraphs on six vertices are point hypergraphs.

\begin{dfn}\label{def:tetra}
Let $H$ be a $3$-uniform hypergraph.
We say that $H$ satisfies the \textit{tetrahedron condition} if no induced
sub-hypergraph of $H$ is isomorphic to the boundary of a tetrahedron with one
triangle deleted.
\end{dfn}

The boundary of a tetrahedron with one triangle deleted
is illustrated below.
\begin{center}
\begin{tikzpicture}[
  scale=0.9,
  edge/.style={draw=black,line width=0.65pt,line join=round},
  vertex/.style={circle,fill=black,inner sep=1.2pt},
  every node/.style={font=\small}
]
  \coordinate (I) at (0,1.5);
  \coordinate (J) at (-1.1,-0.6);
  \coordinate (K) at (0.15,-0.1);
  \coordinate (L) at (1.2,-0.45);

  \path[edge,fill=gray!20] (I)--(K)--(L)--cycle;
  \path[edge,fill=gray!38] (J)--(K)--(L)--cycle;
  \draw[edge] (I)--(J)--(K)--cycle;

  \foreach \p in {I,J,K,L}
    \node[vertex] at (\p) {};
  \node[above right] at (I) {$i$};
  \node[below left] at (J) {$j$};
  \node[above left,inner sep=1.5pt] at (K) {$k$};
  \node[right] at (L) {$\ell$};

  \node (D) at (-1.05,1.85) {deleted $ijk$};
  \draw[->,line width=0.45pt] (D.south) -- (-0.32,0.40);
\end{tikzpicture}
\end{center}
Thus, the tetrahedron condition means that $H$ has no
induced sub-hypergraph $(W,\mathcal F)$ with
$W=\{i,j,k,\ell\}$ and $\mathcal F=\{ij\ell,ik\ell,jk\ell \}$
(see also Example~\ref{ex:tetrahedron}).

\begin{dfn}\label{def:octa}
A $3$-uniform hypergraph is called an \textit{octahedral obstruction} if it is
isomorphic to the boundary of an octahedron with one triangle deleted.
\end{dfn}

The boundary of an octahedron with one triangle deleted
is illustrated below.
\begin{center}
\begin{tikzpicture}[
  scale=0.9,
  edge/.style={draw=black,line width=0.65pt,line join=round},
  hidden/.style={draw=black!45,densely dashed,line width=0.45pt},
  vertex/.style={circle,fill=black,inner sep=1.2pt},
  every node/.style={font=\small}
]
  \coordinate (A) at (0,1.5);
  \coordinate (B) at (-1.1,-0.15);
  \coordinate (C) at (0.35,-0.45);
  \coordinate (D) at (-0.35,0.45);
  \coordinate (E) at (1.1,0.15);
  \coordinate (F) at (0,-1.5);

  \draw[hidden] (A)--(D)--(B);
  \draw[hidden] (E)--(D)--(F);
  \path[edge,fill=gray!20] (A)--(C)--(E)--cycle;
  \path[edge,fill=gray!38] (B)--(C)--(F)--cycle;
  \path[edge,fill=gray!27] (C)--(E)--(F)--cycle;
  \draw[edge] (A)--(B)--(C)--cycle;

  \foreach \p in {A,B,C,E,F}
    \node[vertex] at (\p) {};
  \node[circle,fill=black!45,inner sep=1.1pt] at (D) {};
  \node[above right] at (A) {$1$};
  \node[left] at (B) {$2$};
  \node[right] at (C) {$3$};
  \node[below left,text=black!60] at (D) {$4$};
  \node[right] at (E) {$5$};
  \node[below] at (F) {$6$};

  \node (del) at (-1.05,1.85) {deleted $123$};
  \draw[->,line width=0.45pt] (del.south) -- (-0.02,0.38);
\end{tikzpicture}
\end{center}
In other words, an octahedral obstruction is a hypergraph
isomorphic to $(V,\mathcal H)$, where
$V=\{1,2,3,4,5,6\}$ and $\mathcal H=\{124,135,145,236,246,356,456\}$.

\begin{prop}\label{prop:tetraocta}
Let $H=(V,\mathcal H)$ be a $3$-uniform hypergraph with $|V|=6$.
If $H$ is a point hypergraph, then it satisfies the tetrahedron condition and is not an octahedral obstruction.
\end{prop}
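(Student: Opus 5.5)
Both conclusions follow from Theorem~\ref{thm:surface-obst}, applied once to the tetrahedron and once to the octahedron; no new computation is needed.

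For the tetrahedron condition, we argue by contrapositive. Suppose $H$ has an induced sub-hypergraph $H|_W$ with $W=\{i,j,k,\ell\}$ and faces exactly $\{ij\ell,ik\ell,jk\ell\}$. Then $H|_W$ is $(V(K),K_2\setminus\{ijk\})$, where $K$ is the boundary complex of the tetrahedron on $W$. This $K$ triangulates $S^2$, which is connected, orientable and closed. Hence $H|_W$ is non-point by Theorem~\ref{thm:surface-obst}, and $H$ is non-point by Corollary~\ref{cor:induced-obst}. Alternatively, the explicit identity $\partial(i,j,k)=\partial(i,j,\ell)-\partial(i,k,\ell)+\partial(j,k,\ell)$ from Example~\ref{ex:tetrahedron} shows directly that $[\partial\vv{ijk}]_H=0$ with $ijk$ a non-face, contradicting Theorem~\ref{thm:char}.

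For the octahedral obstruction, suppose $H$ is isomorphic to $(V,\mathcal H)$ with $\mathcal H=\{124,135,145,236,246,356,456\}$. Being a point hypergraph is invariant under relabelling vertices, since we may permute the $a_{ij}$. So we may assume $H$ equals this hypergraph. It is $(V(K),K_2\setminus\{123\})$, where $K$ is the octahedral boundary complex of Example~\ref{ex:octahedral-moduli}, which again triangulates $S^2$. Theorem~\ref{thm:surface-obst} then says $H$ is non-point, a contradiction.

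The only point needing any care is confirming that the listed triangle sets really are the full boundary complexes minus one triangle, i.e.\ that $K_2=\{123,124,135,145,236,246,356,456\}$ is the octahedron with opposite pairs $\{1,6\},\{2,5\},\{3,4\}$. This is immediate from Example~\ref{ex:octahedral-moduli}, so there is no real obstacle.
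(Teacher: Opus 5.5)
Your proposal is correct and follows the paper's proof: both conclusions come from Theorem~\ref{thm:surface-obst}, via Corollary~\ref{cor:induced-obst}, applied to the boundaries of the tetrahedron and the octahedron as triangulations of $S^2$. The only difference is cosmetic: in the octahedral case you apply Theorem~\ref{thm:surface-obst} directly, which is just the special case of Corollary~\ref{cor:induced-obst} where the induced sub-hypergraph is all of $H$.
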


\begin{proof}
The boundaries of a tetrahedron and of an octahedron are triangulations of $S^2$.
Thus the assertion follows from Corollary~\ref{cor:induced-obst}.
\end{proof}

The rest of the section is devoted to the converse; see Theorem~\ref{thm:tetraocta}.
For comparison, we first recall the five-vertex case.

\begin{thm}\label{thm:five}
Let $H=(V,\mathcal H)$ be a $3$-uniform hypergraph with $|V|=5$.
Then $H$ is a point hypergraph if and only if it satisfies the tetrahedron condition.
\end{thm}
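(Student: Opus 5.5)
Necessity is immediate: the boundary of a tetrahedron with one triangle deleted is non-point by Theorem~\ref{thm:surface-obst}, so by Corollary~\ref{cor:induced-obst} a point hypergraph on five vertices satisfies the tetrahedron condition. The content is sufficiency. For that I will use Lemma~\ref{lem:rational-point-criterion}: it suffices to show that for every non-face $N$ of $H$, the class $\partial\vv{N}$ does not lie in the $\mathbb Q$-span $L_H\otimes\mathbb Q$ of $\{\partial\vv{T}\mid T\in\mathcal H\}$.

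The first step is to describe the linear dependencies among boundaries of triangles on five vertices. Let $W=\operatorname{Ker}\partial$ on $C_2(V;\mathbb Q)$. This is the space of $2$-cycles of the full $2$-skeleton of the simplex on $[5]$. Its dimension is $\binom52-\binom42=4$ (count $\dim C_2-\operatorname{rank}\partial$, using $\operatorname{rank}\partial=\dim C_1-(|V|-1)$). It is spanned by the boundaries $\partial\vv{Q}$ of the five tetrahedra $Q\in\binom V4$. Suppose $\partial\vv{N}\in L_H\otimes\mathbb Q$. Then there is a cycle $z\in W$ whose support contains $N$ and is otherwise contained in $\mathcal H$. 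So I need to understand the minimal supports (circuits) of nonzero cycles in $W$. I expect these to be exactly two kinds: the four triangles of a tetrahedron boundary, and the six triangles of a triangular bipyramid boundary $\{u,v\}*\partial\{a,b,c\}$. The latter is the sum of the two tetrahedra $\{u,a,b,c\}$ and $\{v,a,b,c\}$ with $abc$ cancelling.

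The second step, which I expect to be the main obstacle, is proving this classification of circuits on five vertices. I would argue as follows. A $2$-cycle with support $\Sigma$ must have every edge of $\Sigma$ in at least two triangles of $\Sigma$. Minimal such supports on $\le5$ vertices can be enumerated by hand. The cycle on four vertices is the tetrahedron. A circuit using all five vertices has every vertex link a cycle graph on $\le4$ vertices, which forces a $2$-sphere: the bipyramid. Alternatively, when every vertex link is $K_4$ the support is the full complex, which is not minimal. To be careful, I would write $z=\sum_Q \lambda_Q\partial\vv{Q}$ and case-split on how many $\lambda_Q$ are nonzero. Each triangle lies in exactly two tetrahedra, so with three or more nonzero coefficients the support contains a tetrahedron boundary or a bipyramid. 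Either of these must then equal $\Sigma$ by minimality.

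Third, I would conclude using the tetrahedron condition. If the circuit containing $N$ is a tetrahedron boundary, its other three triangles are faces while $N$ is not. That is exactly a forbidden induced sub-hypergraph, contradicting the hypothesis. If it is a bipyramid $\{u,v\}*\partial\{a,b,c\}$, say with $N=uab$ and the other five triangles faces, consider $abc$. If $abc\in\mathcal H$, then $\{u,a,b,c\}$ carries faces $uac,ubc,abc$ and non-face $uab$, violating the tetrahedron condition. If $abc\notin\mathcal H$, then $\{v,a,b,c\}$ carries faces $vab,vac,vbc$ and non-face $abc$, again a violation. Hence no non-face has $\partial\vv{N}\in L_H\otimes\mathbb Q$. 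Lemma~\ref{lem:rational-point-criterion} then shows $H$ is a point hypergraph.
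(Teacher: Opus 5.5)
Your proof is correct, but it takes a different route from the paper's. For sufficiency, the paper invokes the five-variable classification of Belmans, De~Laet and Le~Bruyn: all sixteen isomorphism classes of adequate collections are realized by point varieties, and adequacy is the tetrahedron condition.

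You argue intrinsically through Lemma~\ref{lem:rational-point-criterion}. A rationally trivial non-face $N$ yields a circuit $C\ni N$ with $C\setminus\{N\}\subset\mathcal H$; to get it, choose $S\subset\mathcal H$ minimal with $\partial\vv{N}$ in its $\mathbb Q$-span, as in Lemma~\ref{lem:bad-nonface}. Five-vertex circuits are tetrahedron and bipyramid boundaries, and the tetrahedron condition rules out both. Your case split on $abc$ is exactly the two-step closure the paper performs for type $6$ in Lemma~\ref{lem:pointed-circuit-check}. In effect you run the paper's six-vertex machinery on five vertices, where only types $4$ and $6$ occur; you could quote the $v\le 5$ part of Lemma~\ref{lem:circuit} directly.

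What each route buys:
\begin{itemize}
\item The citation is shorter and comes with the explicit list of sixteen classes.
\item Your argument is self-contained and independent of an external enumeration. It also explains why five vertices need no exceptional obstruction: every five-vertex circuit is forced by tetrahedral closure, while the first circuits escaping it (types $8\mathrm b$ and $11$) need six vertices.
\item On the other hand, it produces realizations only through the generic character of Lemma~\ref{lem:separation}, not explicit parameters.
\end{itemize}

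The step to tighten is the classification of five-vertex circuits, which you only sketch. The claim that every vertex link of a circuit is a cycle is asserted without justification. A case split on the number of nonzero $\lambda_Q$ is awkward, because $z=\sum_Q\lambda_Q\partial\vv{Q}$ is not unique.

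A clean fix is to orient each $Q_x=V\setminus\{x\}$ as a facet of the boundary of the $4$-simplex, so that $\sum_x\partial\vv{Q_x}=0$. Then the coefficient of the triangle $V\setminus\{x,y\}$ in $\sum_x\lambda_x\partial\vv{Q_x}$ is $\pm(\lambda_x-\lambda_y)$. Hence the support of a nonzero cycle is the set of triangles whose two missing vertices lie in different level sets of $\lambda$. Coarsening the level-set partition to two blocks does not enlarge the support. So the minimal supports come from partitions of $V$ of shape $1|4$, giving a tetrahedron boundary, or $2|3$, giving a bipyramid.
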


\begin{proof}
The only-if direction follows from Corollary~\ref{cor:induced-obst}, since the
boundary of a tetrahedron is a triangulation of $S^2$.
We prove the converse. The five-variable classification in
\cite[Fig.~2 in Section~4.3]{BDL} shows that all sixteen
isomorphism classes of adequate collections are realized as collections of coordinate planes not contained in the point variety of a quantum polynomial algebra.
Since adequacy is precisely the tetrahedron condition
in the language of $3$-uniform hypergraphs,
the assertion follows.
\end{proof}

The following closure operation keeps track of the faces forced by the tetrahedron condition.

\begin{dfn}\label{def:tetraclosure}
Let $V$ be a finite set.
A subset $A\subset \binom{V}{3}$ is called
\textit{tetrahedrally closed} if, for every four-element subset $Q\subset V$,
whenever three triangles of $\binom{Q}{3}$ belong to $A$, the remaining one also belongs to $A$.

For $S\subset \binom{V}{3}$, the \textit{tetrahedral closure} of $S$, denoted by $\operatorname{cl}(S)$, is the smallest tetrahedrally closed subset of $\binom{V}{3}$ containing $S$.
\end{dfn}

The tetrahedron condition ensures that the tetrahedral closure of any set of faces remains inside the face set.

\begin{lem}\label{lem:tetraclosure}
Suppose that $H=(V,\mathcal H)$ satisfies the tetrahedron condition.
Let $S\subset\mathcal H$.
Then $\operatorname{cl}(S)\subset\mathcal H$.
In particular, if $N\notin\mathcal H$, then $N\notin\operatorname{cl}(S)$.
\end{lem}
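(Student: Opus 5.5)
The plan is to show that $\mathcal H$ is itself tetrahedrally closed and contains $S$; since $\operatorname{cl}(S)$ is by definition the smallest tetrahedrally closed subset of $\binom{V}{3}$ containing $S$, this gives $\operatorname{cl}(S)\subset\mathcal H$ at once. (If one prefers to avoid relying on the existence of a smallest such subset, one can note that the intersection of all tetrahedrally closed subsets containing $S$ is again tetrahedrally closed, and $\mathcal H$ is one of the sets being intersected.) The inclusion $S\subset\mathcal H$ is a hypothesis, so the only real content is the tetrahedral closedness of $\mathcal H$.

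To verify that $\mathcal H$ is tetrahedrally closed, I will take a four-element subset $Q\subset V$ such that three of the four triangles in $\binom{Q}{3}$ lie in $\mathcal H$, and suppose for contradiction that the fourth triangle $F$ does not. Then the induced sub-hypergraph $H|_Q$ has exactly three faces, namely $\binom{Q}{3}\setminus\{F\}$. Labelling $Q=\{i,j,k,\ell\}$ with $F=ijk$, its face set is $\{ij\ell,ik\ell,jk\ell\}$, so $H|_Q$ is the boundary of a tetrahedron with one triangle deleted. This contradicts the tetrahedron condition of Definition~\ref{def:tetra}. Hence $F\in\mathcal H$.

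The ``in particular'' clause then follows by contraposition: if $N\in\operatorname{cl}(S)$, then $N\in\mathcal H$, so no non-face $N\notin\mathcal H$ can lie in $\operatorname{cl}(S)$. I do not expect any step here to be a genuine obstacle. The only point needing a moment's care is checking that ``three of the four triangles are faces but the fourth is not'' corresponds exactly to the induced configuration that the tetrahedron condition forbids.
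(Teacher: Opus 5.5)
Your proposal is correct and follows essentially the same route as the paper: the tetrahedron condition makes $\mathcal H$ tetrahedrally closed, and minimality of $\operatorname{cl}(S)$ then gives $\operatorname{cl}(S)\subset\mathcal H$. The paper asserts the tetrahedral closedness of $\mathcal H$ in one line. You spell it out by identifying $H|_Q$ with the forbidden configuration, which is the only verification needed.
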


\begin{proof}
Since $H$ satisfies the tetrahedron condition, the face set $\mathcal H$ is
tetrahedrally closed.  Since $S\subset\mathcal H$ and $\operatorname{cl}(S)$ is
the smallest tetrahedrally closed subset of $\binom{V}{3}$ containing $S$, we
have $\operatorname{cl}(S)\subset\mathcal H$.
The final assertion follows immediately.
\end{proof}

We next introduce the combinatorial objects which remain to be analyzed.

\begin{dfn}\label{def:pointed-circuit}
Let $V$ be a finite set. A subset $C\subset \binom{V}{3}$ is called a \textit{circuit} if the vectors
\[ \{\partial\vv{T}\mid T\in C\} \]
are linearly dependent over $\mathbb Q$, and every proper subcollection is linearly independent.

A \textit{pointed circuit} is a pair $(C,N)$, where $C$ is a circuit and $N\in C$.
\end{dfn}

The circuits defined above are the circuits of the full simplicial
matroid on $\binom{V}{3}$ over $\mathbb Q$;
see \cite[Definition~6.1.1]{CL}.
For example, the triangles of any finite triangulation of
a connected orientable closed surface form a circuit;
see \cite[Example~6.1.4]{CL}.

We now show that a rationally trivial non-face produces a pointed circuit $(C,N)$ such that $N\notin\operatorname{cl}(C\setminus\{N\})$.

\begin{lem}\label{lem:bad-nonface}
Let $H=(V,\mathcal H)$ be a $3$-uniform hypergraph  satisfying the tetrahedron condition.
Let $N\notin\mathcal H$ be a non-face such that
\[
[\partial\vv{N}]_H\otimes 1=0
\quad\text{in }G_H\otimes_{\mathbb Z}\mathbb Q.
\]
Then there exists a pointed circuit $(C,N)$ such that
$C\setminus\{N\}\subset\mathcal H$ and $N\notin\operatorname{cl}(C\setminus\{N\})$.
\end{lem}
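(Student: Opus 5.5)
The plan is to turn the rational triviality of $[\partial\vv{N}]_H$ into a linear dependence and then shrink it to a circuit. The hypothesis $[\partial\vv{N}]_H\otimes1=0$ means that $\partial\vv{N}$ lies in the $\mathbb Q$-span of $\{\partial\vv{T}\mid T\in\mathcal H\}$. Indeed, $G_H\otimes\mathbb Q=C_1(V;\mathbb Q)/(L_H\otimes\mathbb Q)$ by right exactness of $\otimes\mathbb Q$, and $L_H\otimes\mathbb Q$ is identified with the $\mathbb Q$-span of the $\partial\vv{T}$ inside $C_1(V;\mathbb Q)$ by flatness of $\mathbb Q$. So we can write
\[
\partial\vv{N}=\sum_{T\in S}\lambda_T\,\partial\vv{T},\qquad \lambda_T\in\mathbb Q\setminus\{0\},
\]
with $S\subset\mathcal H$. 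We choose $S$ minimal among all subsets of $\mathcal H$ for which such an expression exists. Since $N\notin\mathcal H$, we have $N\notin S$, and $S\ne\varnothing$ because $\partial\vv{N}\ne0$.

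Next we set $C=S\cup\{N\}$ and check that it is a circuit. The vectors $\{\partial\vv{T}\mid T\in C\}$ are linearly dependent, since the relation above involves $N$ with coefficient $1$. For minimality, take any proper subcollection $C'\subsetneq C$ and suppose it carried a nontrivial dependence.
\begin{itemize}
\item If that dependence involves $N$ with nonzero coefficient, we can solve for $\partial\vv{N}$ using a proper subset of $S$. This contradicts the minimality of $S$.
\item If it does not involve $N$, then the vectors $\partial\vv{T}$ with $T\in S$ are dependent. We then use that dependence to eliminate one triangle $T_0\in S$ from the expression for $\partial\vv{N}$: subtract a suitable multiple so that the coefficient of $T_0$ vanishes. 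This gives an expression for $\partial\vv{N}$ over $S\setminus\{T_0\}$, again contradicting minimality. (Equivalently, minimality of $S$ forces $\{\partial\vv{T}\}_{T\in S}$ to be independent.)
\end{itemize}
Hence $C$ is a circuit, $(C,N)$ is a pointed circuit, and $C\setminus\{N\}=S\subset\mathcal H$.

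Finally, since $H$ satisfies the tetrahedron condition and $S\subset\mathcal H$, Lemma~\ref{lem:tetraclosure} gives $\operatorname{cl}(S)\subset\mathcal H$. As $N\notin\mathcal H$, it follows that $N\notin\operatorname{cl}(C\setminus\{N\})$. The only step that needs care is the first one, identifying rational triviality in $G_H\otimes\mathbb Q$ with membership in the rational span inside $C_1(V;\mathbb Q)$; everything after that is standard matroid reasoning, and I expect no real obstacle.
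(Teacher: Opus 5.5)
Your proof is correct and follows the same route as the paper: choose a minimal $S\subset\mathcal H$ with $\partial\vv{N}$ in the $\mathbb Q$-span of $\{\partial\vv{T}\mid T\in S\}$, set $C=S\cup\{N\}$, and apply Lemma~\ref{lem:tetraclosure} to get $N\notin\operatorname{cl}(S)$. You also spell out two steps the paper leaves implicit: the identification of $G_H\otimes_{\mathbb Z}\mathbb Q$ with $C_1(V;\mathbb Q)$ modulo the rational span, and why minimality of $S$ forces $C$ to be a circuit.
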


\begin{proof}
Since
$[\partial\vv{N}]_H\otimes 1=0$ in $G_H\otimes_{\mathbb Z}\mathbb Q$,
there exists a finite subset $S\subset\mathcal H$ such that
\[
\partial\vv{N}\in
\left\langle \partial\vv{T}\mid T\in S\right\rangle_{\mathbb Q}.
\]
Choose such an $S$ minimal with respect to inclusion, and set $C:=S\cup\{N\}$.
The minimality of $S$ implies that $C$ is a circuit.
Moreover, $C\setminus\{N\}\subset\mathcal H$.
By Lemma~\ref{lem:tetraclosure}, $\operatorname{cl}(S)\subset\mathcal H$.
Since $N\notin\mathcal H$, we have $N\notin\operatorname{cl}(S)$.
\end{proof}

Therefore, to understand when a rationally trivial non-face can occur, we analyze pointed circuits $(C,N)$ satisfying
$N\notin\operatorname{cl}(C\setminus\{N\})$.
Before the classification lemma, we recall the following elementary facts. 
In what follows, let $K$ be a finite two-dimensional simplicial complex and fix the following notation.  
\begin{itemize}
\item Let $v,e$, and $f$ be the numbers of vertices, edges, and triangles of $K$, respectively. 
\item For each edge $E$ of $K$, let $m(E)$ denote the number of triangles of $K$ containing $E$, and let us call it the \emph{degree} of $E$.  
\end{itemize}
\begin{lem}\label{lem:2e=3f}
Suppose that every edge of $K$ is contained in at least two triangles and that $2e=3f$. 
Then every edge of $K$ is contained in exactly two triangles, and, for every vertex $x$ of $K$, the link $\operatorname{lk}_K(x)$ is a disjoint union of cycles. 
\end{lem}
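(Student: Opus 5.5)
The plan is a double count of edge--triangle incidences, followed by a local reading of the link. Every triangle has exactly three edges, so
\[
\sum_{E} m(E) = 3f,
\]
where the sum runs over all edges $E$ of $K$. By hypothesis $m(E)\ge 2$ for every edge, so $\sum_E m(E)\ge 2e$. The assumption $2e=3f$ turns this into an equality, $\sum_E (m(E)-2)=0$. Each summand is nonnegative, so every summand vanishes, and therefore $m(E)=2$ for every edge. This settles the first assertion.

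For the second assertion, fix a vertex $x$ and view $\operatorname{lk}_K(x)$ as a graph. The standard correspondences are as follows.
\begin{itemize}
\item The vertices of the link are the vertices $y$ with $xy$ an edge of $K$.
\item The edges of the link are the pairs $yz$ with $xyz$ a triangle of $K$.
\end{itemize}
The link contains no triangles, because $K$ is two-dimensional. Consequently the degree of a vertex $y$ in $\operatorname{lk}_K(x)$ is exactly the number of triangles of $K$ containing the edge $xy$, namely $m(xy)$. By the first part this degree is $2$, so $\operatorname{lk}_K(x)$ is a finite simple graph in which every vertex has degree exactly $2$.

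Such a graph is a disjoint union of cycles. One can see this by following edges from any vertex: since the graph is finite and every degree is $2$, the walk must close up into a cycle, and each component is exactly one such cycle. Each cycle has length at least $3$, because the link is a simple graph (there are no multiple edges between two vertices). This concludes the argument.

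The only point needing care is the identification of the link-degree of $y$ with $m(xy)$. The key step is the bijection between edges $yz$ of the link at $y$ and triangles $xyz$ of $K$ containing $xy$. Beyond this bijection, the argument is routine counting, and I do not expect any real obstacle.
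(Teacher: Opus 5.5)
Your proposal is correct and is essentially the paper's proof: double-count edge--triangle incidences to get $\sum_E (m(E)-2) = 3f-2e = 0$, hence $m(E)=2$ for every edge. Then read off $\deg_{\operatorname{lk}_K(x)}(y) = m(\{x,y\}) = 2$ to conclude that each link is a disjoint union of cycles.
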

\begin{proof}
We have
\[\sum_{E : \text{ edges of }K} m(E)=3f.\] 
On the other hand, by the assumption, we also have $m(E)\geq 2$ for every edge $E$. 
Since $3f=2e$, it follows that
\[\sum_{E : \text{ edges of }K}\bigl(m(E)-2\bigr)=3f-2e=0.\]
Each summand is a nonnegative integer, and hence $m(E)=2$ holds for every edge $E$ of $K$. 

Now, fix a vertex $x$ of $K$. Then the link $\operatorname{lk}_K(x)$ is a finite graph. 
Let $y$ be one of its vertices. Then $\{x,y\}$ is an edge of $K$, 
and the vertices adjacent to $y$ in $\operatorname{lk}_K(x)$ are precisely the vertices $z$ such that $\{x,y,z\}$ is a triangle of $K$. 
Therefore, $\deg_{\operatorname{lk}_K(x)}(y)=m(\{x,y\})=2$. This implies that $\operatorname{lk}_K(x)$ is a disjoint union of cycles. 
\end{proof}

\begin{lem}\label{lem:shakehand} 
For every vertex $x$ of $K$, the number of edges of odd degree incident with $x$ is even. 
In particular, if $m(E) \geq 2$ for every edge $E$ in $K$, then $3f - 2e=1$ never happens. 
\end{lem}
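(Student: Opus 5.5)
The first assertion will be obtained by a handshake argument in the link of $x$. The second follows from the first by a double count with a parity correction.

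For a vertex $x$ of $K$, consider the link $\operatorname{lk}_K(x)$. This is the graph whose vertices are the $y$ with $\{x,y\}$ an edge of $K$, and whose edges are the pairs $\{y,z\}$ with $\{x,y,z\}\in K$. As in the proof of Lemma~\ref{lem:2e=3f}, the degree of $y$ in $\operatorname{lk}_K(x)$ equals $m(\{x,y\})$. This holds also when $\{x,y\}$ lies in no triangle: then $y$ is an isolated vertex of the link, and both sides are $0$. By the handshake lemma applied to the finite graph $\operatorname{lk}_K(x)$, the number of odd-degree vertices of the link is even. These vertices correspond bijectively to the edges of odd degree incident with $x$, which gives the first claim.

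For the second claim, assume $m(E)\ge2$ for every edge $E$. Double counting the incidences between edges and triangles gives
\[
3f-2e=\sum_{E}\bigl(m(E)-2\bigr)=:s\ge0,
\]
and suppose, for a contradiction, that $s=1$. Then exactly one edge $E_0=\{x,y\}$ has $m(E_0)=3$, and every other edge has $m(E)=2$. In particular, $E_0$ is the only edge of odd degree in $K$. Applying the first part at the vertex $x$, the number of odd-degree edges incident with $x$ must be even, yet it equals exactly $1$. This contradiction shows that $3f-2e=1$ never happens.

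I see no real obstacle here. The only point needing care is the identification $\deg_{\operatorname{lk}_K(x)}(y)=m(\{x,y\})$, which already appears in the proof of Lemma~\ref{lem:2e=3f}. It does not require any hypothesis on $K$ beyond being a simplicial complex, since distinct triangles containing $\{x,y\}$ give distinct third vertices $z$.
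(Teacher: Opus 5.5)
Your proposal is correct and follows essentially the same route as the paper: the handshake lemma in $\operatorname{lk}_K(x)$, using $\deg_{\operatorname{lk}_K(x)}(y)=m(\{x,y\})$, gives the first claim. The identity $\sum_E(m(E)-2)=3f-2e$ then reduces $3f-2e=1$ to a single degree-$3$ edge, which contradicts that parity statement at either of its endpoints.
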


\begin{proof}
Fix a vertex $x$ of $K$ and consider the graph $L=\operatorname{lk}_K(x)$. 
The vertices of $L$ are in bijection with the edges of $K$ incident with $x$.
Moreover,
\[\deg_L(y)=m(\{x,y\})\]
for every vertex $y \in L$.
Thus, the assertion directly follows from the handshaking lemma for $L$. 

For the last assertion, suppose that $m(E)\ge 2$ for every edge $E$ and that $3f-2e=1$.  Since
\[
\sum_{E : \text{ edges of }K}(m(E)-2)=3f-2e=1,
\]
there is exactly one edge with degree $3$ and every other edge has degree $2$. 
This contradicts the first assertion.
Hence $3f-2e=1$ cannot occur.
\end{proof}

\begin{lem}\label{lem:link-surface}
Let $K$ be a finite connected $2$-dimensional simplicial complex.
Suppose that, for every vertex $x$, the link $\operatorname{lk}_K(x)$ is a cycle. Then $K$ is a triangulation of a closed surface. 
\end{lem}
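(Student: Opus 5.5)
The plan is to show that the geometric realization $|K|$ is a closed surface, i.e.\ that every point of $|K|$ has a neighbourhood homeomorphic to an open disk. We also have to rule out the degenerate cases hidden in the hypotheses. The points of $|K|$ fall into three types: interior points of triangles, interior points of edges, and vertices. We check each type directly using the hypothesis that every vertex link is a cycle. Connectedness of $K$ is given, and finiteness gives compactness, so once the local disk property is established, $|K|$ is a compact connected surface without boundary.

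First we extract the edge condition from the link condition. Let $E=\{x,y\}$ be an edge. Then $y$ is a vertex of $\operatorname{lk}_K(x)$, and, as in the proof of Lemma~\ref{lem:2e=3f}, its degree there is $\deg_{\operatorname{lk}_K(x)}(y)=m(E)$. Since $\operatorname{lk}_K(x)$ is a cycle (of length at least $3$, being a simple graph), every vertex of it has degree exactly $2$, so $m(E)=2$ for every edge. We should also note that every vertex lies in some edge and every edge lies in some triangle, since the links are nonempty cycles. Hence $K$ is pure of dimension $2$, with no isolated vertices or dangling edges.

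Next we handle each type of point. An interior point of a triangle obviously has a disk neighbourhood. For an interior point of an edge $E$, the two triangles containing $E$ glue along $E$, and since $m(E)=2$ their union is a neighbourhood of the point homeomorphic to a disk. For a vertex $x$, its open star is the open cone over $|\operatorname{lk}_K(x)|$. Since the link is a single cycle, $|\operatorname{lk}_K(x)|\cong S^1$, and the open cone on $S^1$ is an open disk. Here we use the standard identification of the closed star $\operatorname{st}_K(x)$ with the join $x*\operatorname{lk}_K(x)$, which holds for simplicial complexes.

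The main subtlety is this vertex step, and specifically the fact that the link being a \emph{single} cycle rather than a disjoint union of cycles is exactly what makes the cone a disk; this is why Lemma~\ref{lem:2e=3f} alone does not suffice. Everything else is routine. We conclude that $|K|$ is a compact connected $2$-manifold without boundary, i.e.\ $K$ is a triangulation of a closed surface.
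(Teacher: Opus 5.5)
Your proof is correct, but it takes a more hands-on route than the paper. The paper proves the lemma in one line: it invokes the general local criterion for PL manifolds (Hudson, Corollary~1.16 and the remark after it), under which vertex links being circles makes $|K|$ a PL $2$-manifold without boundary, and finiteness plus connectedness then give a closed surface.

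You instead verify the two-dimensional case directly. You identify the open star of each vertex with the open cone on $|\operatorname{lk}_K(x)|\cong S^1$ via $\operatorname{st}_K(x)=x*\operatorname{lk}_K(x)$, which is an open disk. The vertex step alone already suffices. Open vertex stars are open in $|K|$ and cover $|K|$, since every point lies in the interior of some simplex, and that interior lies in the open star of each of its vertices. So your separate treatment of interior points of triangles and edges, including the derivation of $m(E)=2$, is correct but redundant.

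The paper's citation is shorter and is stated in a form that works in all dimensions, where checking that links are PL spheres is a genuine condition. Your argument is self-contained: it uses only that a cone on a circle is a disk, which is exactly the simplification available in dimension $2$.
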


\begin{proof}
By the standard local criterion for PL manifolds
\cite[Corollary~1.16 and the following remark]{Hu}, the assumption on vertex links implies that $|K|$ is a PL $2$-manifold without boundary.
Since $K$ is finite and connected, $|K|$ is a closed surface.
\end{proof}

\begin{lem}\label{lem:circuit}
Let $V=\{1,2,3,4,5,6\}$ and let $C\subset\binom{V}{3}$ be a circuit.
Then, up to relabeling of vertices, $C$ is one of the following six types.
In the last column, we write $ijk$ for $\partial\vv{ijk}$, where each triangle
is oriented in the increasing order of its vertices.
\[
\resizebox{\textwidth}{!}{$
\begin{array}{c@{\qquad}l@{\qquad}l}
\toprule
\text{type} & C & \text{primitive relation} \\
\midrule
4 &
\{123,124,134,234\} &
123-124+134-234=0 \\
\addlinespace[1mm]

6 &
\{123,124,134,235,245,345\} &
123-124+134-235+245-345=0 \\
\addlinespace[1mm]

8\mathrm a &
\{123,124,134,235,245,346,356,456\} &
123-124+134-235+245-346+356-456=0 \\
\addlinespace[1mm]

8\mathrm b &
\{123,124,135,145,236,246,356,456\} &
123-124+135-145-236+246-356+456=0 \\
\addlinespace[1mm]

10 &
\{123,124,125,126,134,156,235,246,345,456\} &
123-124-125+126+134-156-235+246-345+456=0 \\
\addlinespace[1mm]

11 &
\{123,124,125,134,136,156,235,236,246,345,456\} &
2\cdot123-124-125+134+136-156-235-236+246-345+456=0
\\
\bottomrule
\end{array}
$}
\]
\end{lem}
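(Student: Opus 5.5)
Given a circuit $C$ on six vertices, I will let $z=\sum_{T\in C}\lambda_T\vv{T}$ be the unique (up to scaling) rational relation, so $\partial z=0$ and every $\lambda_T\neq0$. I will normalize $z$ to be primitive integral, and let $K$ be the $2$-dimensional complex generated by $C$, with $v\le 6$ vertices, $e$ edges and $f=|C|$ triangles.

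\emph{Step 1: counting constraints.} Every edge of $K$ lies in at least two triangles of $C$. Otherwise the coefficient of that edge in $\partial z$ is a single $\pm\lambda_T\ne0$. Hence $3f\ge 2e$. Since $C$ is a circuit, the rank of $\{\partial\vv{T}\mid T\in C\}$ is $f-1$. These vectors lie in the cycle space of the $1$-skeleton of $K$, which has dimension $e-v+1$, so $f-1\le e-v+1$, i.e. $f\le e-v+2$. Combining the two inequalities gives $e\le 3(v-2)$ and $f\le 2v-4+(3f-2e)/\ldots$; more usefully, $3f-2e\le 3(e-v+2)-2e=e-3v+6$. Together with $e\le\binom{v}{2}$ this bounds $f$: for $v=4,5,6$ we get $f\le4,6,11$. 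The case $3f-2e=1$ is excluded by Lemma~\ref{lem:shakehand}.

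\emph{Step 2: the pseudomanifold case $3f=2e$.} By Lemma~\ref{lem:2e=3f}, every edge has degree $2$ and every vertex link is a disjoint union of cycles. On at most six vertices, a link has at most five vertices, and a union of two disjoint cycles needs at least six. So every link is a single cycle. Connectivity of $K$ follows from minimality of the circuit, so Lemma~\ref{lem:link-surface} shows that $K$ triangulates a closed surface. Orientability follows from $\partial z=0$ with all coefficients $\pm$ equal, which propagates across edges. Euler characteristic and $f=2v-2\chi/\ldots$ then leave only spheres: the $6$-vertex torus does not exist, and the only candidate, the $7$-vertex torus, has too many vertices. The spheres are the tetrahedron ($v=4$), the bipyramid ($v=5$), and the two $6$-vertex triangulations of $S^2$ (the octahedron and the other degree sequence). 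These give types $4,6,8\mathrm a,8\mathrm b$.

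\emph{Step 3: the case $3f-2e\ge2$ (main obstacle).} This forces $v=6$ and $e\ge 3f-\ldots$, and the counting leaves only a few $(e,f)$ pairs, roughly $f\in\{10,11\}$ with $e=15$, i.e. the complete graph. Here I would do a direct case analysis. First, I would show that no edge can have degree $\ge 4$ without a smaller dependency. Then I would use the parity of Lemma~\ref{lem:shakehand} to pin down which edges have degree $3$ (for $f=10$, $e=15$: six edges of degree $3$; for $f=11$: nine). Next I would enumerate the configurations of high-degree edges up to the action of $\mathfrak S_6$, solving $\partial z=0$ by hand or computer in each case and checking minimality. This is where the types $10$ and $11$ arise, type $11$ carrying the coefficient $2$. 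A finite check is unavoidable, and I expect this enumeration to be the hardest part. Finally, I would verify that each listed relation is primitive and has full support, and that no proper subset is dependent, which confirms that each listed set is a circuit.
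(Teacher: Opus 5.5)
Your Steps 1 and 2 follow the paper's route: edge degrees $\ge 2$, the rank bound $f-1\le e-v+1$, parity to exclude $3f-2e=1$, and, when $3f=2e$, single-cycle links, a closed surface, orientability from the relation, hence spheres. One inequality is reversed: the two bounds give $e\ge 3(v-2)$, not $e\le 3(v-2)$.

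The genuine gap is Step 3, whose setup is wrong. From $2\le 3f-2e\le e-3v+6$ with $v=6$ you get $e\ge 14$, and integrality leaves exactly $(e,f)=(14,10)$ and $(15,11)$.
\begin{itemize}
\item Your case $(15,10)$ has $3f=2e$. It belongs to Step 2 (it is the six-vertex $\mathbb{RP}^2$, excluded by orientability), and all its edges have degree $2$, not six of degree $3$.
\item For $(15,11)$, $\sum_E(m(E)-2)=3$. After parity excludes degrees $4+3$, there are exactly three edges of degree $3$, forming a triangle, not nine.
\item Most seriously, your intended first move, that an edge of degree $\ge 4$ forces a smaller dependency, is false. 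Type $10$ contains $123,124,125,126$, so its edge $12$ has degree $4$, and it is a circuit.
\end{itemize}
Type $10$ lives precisely in the omitted case $(14,10)$. There $\sum_E(m(E)-2)=2$, and Lemma~\ref{lem:shakehand} forces a unique edge of degree $4$. So your plan cannot produce type $10$; carried out as stated, it would ``prove'' that type $10$ does not exist.

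Beyond this, the classification in the two surviving cases is the real content of the lemma, and you defer it to an unspecified enumeration. The paper does it by hand via vertex links.
\begin{itemize}
\item Case $(14,10)$: take the degree-$4$ edge to be $12$ and the missing edge to be $36$. Then $\operatorname{lk}(1)$ is $23,24,25,26$ plus a perfect matching of $\{3,4,5,6\}$, say $\{34,56\}$. $\operatorname{lk}(2)$ cannot use the same matching: the last two triangles would have to be $345,456$, reusing the saturated edges $34,56$. So it uses $\{35,46\}$, and $345,456$ complete type $10$.
\item Case $(15,11)$: let the three degree-$3$ edges be $12,13,23$. One first shows $123\in C$; otherwise the nine triangles through these edges saturate every edge meeting $\{1,2,3\}$, and only $456$ remains. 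Then $\operatorname{lk}(1)$ minus $23$ is a $5$-cycle, and saturation forces $\operatorname{lk}(2)$, giving type $11$.
\end{itemize}
To close the proof you need an argument of this kind, or a correctly scoped search over $(14,10)$ and $(15,11)$ that allows a degree-$4$ edge.
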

\begin{proof}
Let $K_C=\{\sigma:\sigma\subseteq T\text{ for some }T\in C\}$ be the simplicial complex generated by the triangles in $C$. 
Since $C$ is a circuit, there is a relation
\begin{equation}\label{e:cirrel}
\sum_{T\in C}\lambda_T\partial\vv{T}=0,\qquad \lambda_T\in\mathbb Q^\times.
\end{equation}
The minimality of $C$ implies that the $1$-skeleton of $K_C$, denoted by $K_C^{(1)}$, is connected.
Moreover, for an edge $E$ of $K_C$, we have $m(E)\geq 2$. 
Indeed, if $E$ were contained in a unique triangle $T\in C$, then the coefficient of $E$ in a circuit relation would be $\pm\lambda_T$, contradicting $\lambda_T\neq 0$. 

Let $v,e,f$ denote the numbers of vertices, edges, and triangles of $K_C$, respectively; thus $f=|C|$. 
It is clear that $4\leq v\leq 6$ and $2e\leq 3f$. 
On the other hand, the vectors $\partial\vv{T}$ are $1$-cycles in $K_C^{(1)}$, whose cycle space has dimension $e-v+1$. 
Hence, $f-1\leq e-v+1$. Therefore, we obtain 
\begin{equation}\label{eq:ef}
f+v-2\leq e\leq \frac{3f}{2},
\qquad\text{and hence}\qquad
f\geq 2v-4.
\end{equation}

\underline{The case $v=4$.}\ \ 
In this case, \eqref{eq:ef} gives $f\geq 4$. Since there are only four
triangles on four vertices, we have $f=4$. Thus $K_C$ is the boundary
of a tetrahedron, and $C$ is of type $4$.

\underline{The case $v=5$.}\ \ 
In this case, \eqref{eq:ef} gives $f\geq 6$. Since the cycle space of the complete graph on five vertices has dimension $\binom{5}{2}-5+1=6$, 
we also have $f-1\leq 6$. Thus, $f=6$ or $7$. 

First, suppose that $f=6$. Then \eqref{eq:ef} implies that $e=9$, and hence $2e=3f$. 
By Lemma~\ref{lem:2e=3f}, every edge of $K_C$ is contained in exactly two triangles and every vertex link is a disjoint union of cycles. 
Since a vertex link has at most four vertices by $v=5$, it must be a single cycle. Thus, by Lemma~\ref{lem:link-surface}, $K_C$ is a triangulation of a closed surface. 
Moreover, \[\chi(K_C)=v-e+f=5-9+6=2,\] 
where $\chi$ denotes the Euler characteristic, so $K_C$ is a triangulation of the $2$-sphere. 
Up to relabeling, there is a unique triangulation of the $2$-sphere with five vertices, and the corresponding circuit $C$ is of type $6$.

Next, suppose that $f=7$. Then \eqref{eq:ef} gives $e=10$, and hence $3f-2e=21-20=1$. 
This contradicts Lemma~\ref{lem:shakehand}. Therefore, $f=7$ cannot occur. 

\underline{The case $v=6$.}\ \ 
In this case, \eqref{eq:ef} gives $f\geq 8$. Since the cycle space of the complete graph on six vertices has dimension $\binom{6}{2}-6+1=10$, 
we also have $f-1\leq 10$. Thus, $f\in\{8,9,10,11\}$.

First, suppose that $f=8$. Then \eqref{eq:ef} implies that $e=12$, and hence $2e=3f$. As in the case $(v,f)=(5,6)$, every vertex link is a single cycle. 
Therefore, $K_C$ is a triangulation of a closed surface, and $\chi(K_C)=6-12+8=2$. 
Thus, $K_C$ is a triangulation of the $2$-sphere. Up to relabeling, there are exactly two triangulations of the $2$-sphere with six vertices, 
which correspond to $C$ of types $8\mathrm{a}$ and $8\mathrm{b}$. 

Next, suppose that $f=9$. Then \eqref{eq:ef} implies that $e=13$, and hence $3f-2e=27-26=1$. 
Again, this contradicts Lemma~\ref{lem:shakehand}. Therefore $f=9$ cannot occur. 

Next, consider the case $f=10$. By \eqref{eq:ef}, we have $e=14$ or $e=15$. 
\begin{itemize}
\item Let us consider the case $e=14$. Then we have
\[\sum_E\bigl(m(E)-2\bigr)=3f-2e=2.\]
Thus either two edges have degree $3$ and all other edges have degree $2$, 
or one edge has degree $4$ and all other edges have degree $2$. 
The former case is impossible by Lemma~\ref{lem:shakehand}, since the graph formed by the odd-degree edges would have exactly two edges 
but every vertex of this graph must have even degree, which is impossible.  
Hence, there is a unique edge of degree $4$. Up to relabeling, we may assume that this edge is $12$. 
Then \[123,124,125,126\in C.\]
Since all edges incident with $1$ or $2$ already occur, the unique missing edge lies in $\{3,4,5,6\}$. 
Up to relabeling, we may assume that the missing edge is $36$. 

Consider the link of the vertex $1$. It already contains the four edges $23,24,25,26$. 
The vertex $2$ has degree $4$ in this link, whereas each of the vertices $3,4,5,6$ must have degree $2$. 
Hence, the remaining edges of $\operatorname{lk}_{K_C}(1)$ form a perfect matching on $\{3,4,5,6\}$. The possible matchings are $\{34,56\}$ and $\{35,46\}$. 
These two possibilities are equivalent under interchanging $4$ and $5$, so we may assume that the matching is $\{34,56\}$. Thus, \[134,156\in C.\]

The link of the vertex $2$ must similarly be completed by one of the same two matchings. 
If it were completed by $\{34,56\}$, then $234,256\in C$. 
At this point, every edge incident with $1$ or $2$ would already have its prescribed degree, so the remaining two triangles would have to lie on $\{3,4,5,6\}$. 
Since $36$ is missing, the only possibilities would be $345$ and $456$. However, these contain the edges $34$ and $56$, respectively, both of which would already have degree $2$. 
This is impossible.
Consequently, the link of $2$ must be completed by the matching $\{35,46\}$, and hence 
\[235,246\in C.\]

The remaining edge degrees are then uniquely completed by
\[345,456\in C.\] 
Therefore, $C=\{123,124,125,126,134,156,235,246,345,456\}$,
which is of type $10$.
\item Suppose that $e=15$. Then $2e=3f$, so every edge is contained in
exactly two triangles. Since every edge of the complete graph on the six vertices occurs, each vertex link is a $5$-cycle. 
Thus, $K_C$ is a triangulation of a closed surface with $\chi(K_C)=6-15+10=1$. 
This implies that $|K_C|$ is not orientable.
Indeed, it is well known that closed orientable surfaces have even Euler characteristic. 

On the other hand, the circuit relation
\eqref{e:cirrel}
would imply that this surface is orientable. Indeed, if two triangles $T$ and $T'$ share an edge, 
then by comparing the coefficient of that edge, we have $|\lambda_T|=|\lambda_{T'}|$. 
Since the triangle-adjacency graph of $K_C$ is connected, all $|\lambda_T|$ are equal. 
After reversing the orientations of some triangles, the relation gives a coherent orientation of all triangles of $K_C$. 
Hence, $K_C$ would be orientable, a contradiction. 

Therefore, $e=15$ is impossible. 
\end{itemize}

Finally, suppose that $f=11$. Then \eqref{eq:ef} gives $e=15$, so every edge of the complete graph on the six vertices occurs. Moreover, 
\[\sum_E \bigl(m(E)-2\bigr)=3f-2e=33-30=3.\]
Since $2\leq m(E)\leq 4$ for every edge $E$, there are two possibilities for the degrees of the edges: 
\begin{itemize}
\item one edge has degree $4$, one edge has degree $3$, and all the other edges have degree $2$,
\item three edges have degree $3$ and all the other edges have degree $2$.
\end{itemize}
The first case is impossible by Lemma~\ref{lem:shakehand}, since there would be exactly one edge of odd degree.
Consequently, exactly three edges have degree $3$, while the remaining twelve edges have degree $2$. 
By Lemma~\ref{lem:shakehand}, the graph formed by the three odd-degree edges has even degree at every vertex. 
The only simple graph with three edges having this property is a triangle. Up to relabeling, we may assume that these three edges are $12, 13, 23$.

We first claim that $123\in C$. Suppose otherwise. Since each of $12,13,23$ has degree $3$, all nine triangles 
\[124,125,126,\qquad134,135,136,\qquad234,235,236\]
would have to belong to $C$. At this point, the edges $12,13,23$ would already have degree $3$, and every edge between $\{1,2,3\}$ and $\{4,5,6\}$ would already have degree $2$. 
Therefore, neither of the remaining two triangles could contain a vertex of $\{1,2,3\}$. 
However, the only triangle on $\{4,5,6\}$ is $456$, so there cannot be two further distinct triangles, a contradiction. Hence $123\in C$.

Let $L=\operatorname{lk}_{K_C}(1)$.
Then its vertex set is $\{2,3,4,5,6\}$, and the vertex degrees are given by
\[\deg_L(2)=\deg_L(3)=3,\qquad\deg_L(4)=\deg_L(5)=\deg_L(6)=2.\]
Since $123\in C$, the edge $23$ belongs to $L$. Deleting this edge reduces the degrees of $2$ and $3$ by one, 
and hence every vertex of $L\setminus\{23\}$ has degree $2$. 
Thus $L\setminus\{23\}$ is a $5$-cycle. Up to permuting $4,5,6$ and interchanging $2$ and $3$, we may assume that this cycle has edge set
\[\{24,34,36,56,25\}.\]
It follows that the triangles of $C$ containing $1$ are precisely \[123,124,125,134,136,156.\]

Next, let $L'=\operatorname{lk}_{K_C}(2)$. The triangles $123,124,125$ give the edges $13,14,15$ of $L'$. 
In particular, the vertex $1$ has already attained its required degree $3$ in $L'$, so no further edge of $L'$ is incident with $1$. 
Thus, the remaining edges of $L'$ lie on $\{3,4,5,6\}$, and the remaining degrees to be supplied at $3,4,5,6$ are $2,1,1,2$, respectively. 
Hence, the remaining three edges form a path of length three whose endpoints are $4$ and $5$ and whose internal vertices are $3$ and $6$. 
There are exactly two possibilities: $\{34,36,56\}$ or $\{35,36,46\}$. 

First, suppose that the former occurs. Then $234,236,256\in C$. At this point, every edge incident with $1$ or $2$ has attained its prescribed degree, so the remaining two triangles must be supported on $\{3,4,5,6\}$. 
However, each of the four triangles $345, 346, 356, 456$ contains at least one of the edges $34,36,56$, all of which already have degree $2$. This is impossible.

Therefore, the second possibility must occur, and hence \[235,236,246\in C.\]
The remaining two triangles must again be supported on $\{3,4,5,6\}$. 
Moreover, they cannot contain the edge $36$, since this edge already lies in the two triangles $136$ and $236$. 
Among the four triangles on $\{3,4,5,6\}$, the only ones not containing $36$ are 
\[345\quad\text{and}\quad 456.\]
Thus, both of them must belong to $C$.
We conclude that
\[
C=\{123,124,125,134,136,156,235,236,246,345,456\},
\]
which is of type $11$.

\medskip

It follows from the case analysis above that, up to relabeling of the vertices,
every circuit $C$ is one of the sets listed in the table.
We can also directly check that the listed sets are indeed circuits. 
\end{proof}

We now determine which pointed circuits $(C,N)$ satisfy the condition
$N\notin\operatorname{cl}(C\setminus\{N\})$.

\begin{lem}\label{lem:pointed-circuit-check}
Let $V=\{1,2,3,4,5,6\}$, and let $(C,N)$ be a pointed circuit in $\binom{V}{3}$
such that $N\notin\operatorname{cl}(C\setminus\{N\})$.
Then, up to relabeling of the vertices, $(C,N)$ is one of the following two candidates:
\begin{enumerate}
\item $C=\{123,124,135,145,236,246,356,456\}$ (type 8b) and $N=123$.
\item $C=\{123,124,125,134,136,156,235,236,246,345,456\}$ (type 11) and $N=123$.
\end{enumerate}
\end{lem}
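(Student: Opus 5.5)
By Lemma~\ref{lem:circuit}, it suffices to go through the six circuit types and, for each $N\in C$, decide whether $N\in\operatorname{cl}(C\setminus\{N\})$. The symmetry group of each circuit acts on the choices of $N$, so only one $N$ per orbit needs checking. The basic tool is the tetrahedron rule: if three triangles of a four-set $Q$ are in the current set, add the fourth. We iterate this starting from $C\setminus\{N\}$ and stop as soon as $N$ appears.

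First I would dispose of the sphere types with a short propagation each.

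For type $4$, every $N$ is immediately forced, since the other three triangles lie in $\binom{Q}{3}$ with $Q=\{1,2,3,4\}$.

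For type $6$ (the bipyramid over $123$ with apices $4,5$), all six triangles are equivalent.
Take $N=123$. The triangles $124,134$ together with $234$ are not all present, so we go through the other apex instead. The faces $235,245,345$ force $234$ via $Q=\{2,3,4,5\}$. Then $124,134,234$ force $123$. So every $N$ is in the closure.

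For type $8\mathrm a$ (the stacked sphere), the same idea applies: collapse a degree-$3$ vertex by filling in the triangle on its link, which reduces to the type $6$ or type $4$ computation. I expect every $N$ to be forced.

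For type $8\mathrm b$ (the octahedron), I would show that $C\setminus\{123\}$ is already tetrahedrally closed. Every four-set either contains an antipodal pair, and then has at most two octahedron triangles, or it spans no triangle of $C$ at all. Hence no four-set contains three triangles of the set, nothing is ever added, and $N=123$ survives. By symmetry this gives candidate (1).

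Next come types $10$ and $11$. Here I would record, for each four-set $Q$, how many triangles of $C$ it contains, and list the sets $Q$ with $|\binom{Q}{3}\cap C|\ge3$. For type $10$, the edge $12$ of degree $4$ gives the four-sets $\{1,2,3,4\}$, $\{1,2,5,6\}$, and so on. Chains of completions through these sets should regenerate each removed $N$, possibly after first adding non-$C$ triangles to the closure. I would check each orbit of $N$ under the automorphisms of the type-$10$ complex. There are few orbits, since it is a sphere-like complex with one pinched edge.

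For type $11$, the same bookkeeping should show that every $N\ne 123$ is recovered. For $N=123$, I would verify that the closure of $C\setminus\{123\}$ never contains $123$. Any four-set containing $123$ is $\{1,2,3,x\}$, so it suffices to track which triangles $12x,13x,23x$ ever enter the closure and check that no $x$ ever has all three. This gives candidate (2).

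The main obstacle is making the closure computations for types $10$ and $11$ trustworthy. The closure can grow beyond $C$, so a naive "no four-set has three triangles" argument is not available in general. I would organize the computation as a fixpoint, iterating over all fifteen four-subsets of $V$ until the set stabilizes, and cut the work by using the automorphism group of each complex so that only orbit representatives of $N$ are checked. If the hand computation becomes unwieldy, a finite machine check of these fixpoints is an acceptable fallback, since it involves only subsets of the twenty triangles in $\binom{V}{3}$.
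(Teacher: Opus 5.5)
Your strategy is the paper's. You run through the six types of Lemma~\ref{lem:circuit}, reduce $N$ modulo $\operatorname{Aut}(C)$, and compute tetrahedral closures. But as written the proposal does not yet prove the lemma. Its only nontrivial content is that in types $8\mathrm a$, $10$, $11$ every $N$ other than type~$11$'s $123$ lies in $\operatorname{cl}(C\setminus\{N\})$, and this is asserted rather than verified. For types $10$ and $11$ you neither determine the orbits of $N$ nor exhibit a single closure chain, and type $8\mathrm a$ is only ``expected''. By contrast, the closure checks you spell out for type $8\mathrm b$, and plan for type $11$ with $N=123$, are not needed, because the lemma is only a necessary condition. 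The octahedron's symmetry group is transitive on triangles, so every type-$8\mathrm b$ pair is already candidate~(1), and type~$11$ with $N=123$ is candidate~(2) by definition; those checks only show the list is sharp. (Minor: $\{123,124,134,235,245,345\}$ is the bipyramid over $234$ with apices $1,5$, not over $123$; your computation itself is correct.)

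The hand computation needed to close the gap is short, and your $8\mathrm a$ idea already contains the paper's organizing device. After one to three closure steps from $S=C\setminus\{N\}$, one reaches a bipyramid (type-$6$) circuit $D\ni N$ with $D\setminus\{N\}\subset\operatorname{cl}(S)$. Since $\operatorname{cl}(S)$ is tetrahedrally closed, $\operatorname{cl}(D\setminus\{N\})\subset\operatorname{cl}(S)$, and your type-$6$ computation puts $N$ in $\operatorname{cl}(S)$. For $8\mathrm a$ this works because the degree-$3$ vertices $1$ and $6$ share no triangle; filling in the link of whichever one avoids $N$ (giving $345$ or $234$) yields $D$.

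For type~$10$, the automorphisms $(36)(45)$ and $(12)(45)$ reduce to $N\in\{123,124,134,345\}$. The steps are:
$\{1,2,4,6\}\Rightarrow 146$, $\{1,4,5,6\}\Rightarrow 145$ (for $N=123$);
$\{1,2,3,5\}\Rightarrow 135$, $\{1,3,4,5\}\Rightarrow 145$ (for $124$);
$\{1,2,3,5\}\Rightarrow 135$, $\{1,2,4,6\}\Rightarrow 146$ (for $134$);
$\{1,2,3,4\}\Rightarrow 234$, $\{1,2,5,6\}\Rightarrow 256$ (for $345$).
They produce the bipyramids $\{123,125,134,145,235,345\}$, $\{124,126,145,156,246,456\}$, $\{134,135,146,156,345,456\}$, $\{234,235,246,256,345,456\}$.

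For type~$11$, $(123)(456)$ and $(12)(45)$ reduce to $N\in\{123,124,156,456\}$. The steps are:
$\{1,2,3,5\}\Rightarrow 135$, $\{1,2,3,6\}\Rightarrow 126$, $\{1,3,4,5\}\Rightarrow 145$ (for $124$);
$\{1,2,3,4\}\Rightarrow 234$, $\{1,2,3,5\}\Rightarrow 135$, $\{2,3,4,6\}\Rightarrow 346$ (for $156$);
$\{1,2,3,4\}\Rightarrow 234$, $\{1,2,3,5\}\Rightarrow 135$, $\{1,3,5,6\}\Rightarrow 356$ (for $456$).
They produce $\{124,126,145,156,246,456\}$, $\{135,136,156,345,346,456\}$, $\{234,236,246,345,356,456\}$.

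This leaves only type~$11$ with $N=123$ besides type $8\mathrm b$, which is the statement.
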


\begin{proof}
By Lemma~\ref{lem:circuit}, every circuit contained in $\binom{V}{3}$ is, up to relabeling of the vertices, one of the six types listed there.
For each type, we determine which choices of the distinguished triangle $N$ can satisfy
$N\notin\operatorname{cl}(C\setminus\{N\})$.

\underline{The case of type 4.}\ \ 
This circuit is the boundary of a tetrahedron.
Thus, for every $N\in C$, the set $C\setminus\{N\}$ consists of the other three triangles on the same four vertices.
Hence $N \in\operatorname{cl}(C\setminus\{N\})$.
Therefore there is no pointed circuit of type $4$ satisfying the required condition.

\underline{The case of type 6.}\ \ 
Up to relabeling, we may assume that $N=123$.
Set $S=C\setminus\{123\}$.
Since $235,245,345\in S$, we have $234\in\operatorname{cl}(S)$.
Then, since $124,134,234\in\operatorname{cl}(S)$, we obtain
$123\in\operatorname{cl}(S)$, a contradiction.

\underline{The case of type 8b.}\ \  
This circuit is the boundary of the octahedron.
By relabeling of vertices, we may assume that $N=123$.
This gives the first candidate.

\underline{The cases of types 8a, 10, 11.}\ \  
If $C$ is of type 8a, then $(34), (16)(25) \in \operatorname{Aut}(C)$, so we may assume that
\[ N \in \{123, 134, 235\}.\]
If $C$ is of type 10, then $(36)(45), (12)(45) \in \operatorname{Aut}(C)$, so we may assume that 
\[ N \in \{ 123, 124, 134, 345\}.\]
If $C$ is of type 11, then $(123)(456), (12)(45) \in \operatorname{Aut}(C)$, so we may assume that
\[ N \in \{123, 124, 156, 456\}.\]
We now show that all of these cases, except the case of type $11$ with
$N=123$, do not satisfy the required condition.
Put $S=C\setminus\{N\}$.
In the following table, the notation $Q\Rightarrow T$ means that, on the
four-element set $Q$, the other three triangles are already contained in
$\operatorname{cl}(S)$; hence $T$ is also contained in $\operatorname{cl}(S)$.
The third column records the closure steps needed to find the circuit $D$
shown in the last column, which satisfies the following properties:
\begin{itemize}
\item $N\in D$,
\item $D$ is of type $6$ up to relabeling of the vertices,
\item $D\setminus\{N\}\subset\operatorname{cl}(S)$.
\end{itemize}
\begingroup
\footnotesize
\vspace{0.3\baselineskip}
\[
\begin{array}{c@{\qquad}c@{\qquad}l@{\qquad}l}
\toprule
\text{type} & N & \text{closure steps} & \text{circuit }D \\
\midrule
8\mathrm a & 123 &
\{3,4,5,6\}\Rightarrow345 &
\{123,124,134,235,245,345\}
\\[1mm]

8\mathrm a & 134 &
\{3,4,5,6\}\Rightarrow345 &
\{123,124,134,235,245,345\}
\\[1mm]

8\mathrm a & 235 &
\{1,2,3,4\}\Rightarrow234 &
\{234,235,245,346,356,456\}
\\[1mm]

10 & 123 &
\{1,2,4,6\}\Rightarrow146,\ \{1,4,5,6\}\Rightarrow145 &
\{123,125,134,145,235,345\}
\\[1mm]

10 & 124 &
\{1,2,3,5\}\Rightarrow135,\ \{1,3,4,5\}\Rightarrow145 &
\{124,126,145,156,246,456\}
\\[1mm]

10 & 134 &
\{1,2,3,5\}\Rightarrow135,\ \{1,2,4,6\}\Rightarrow146 &
\{134,135,146,156,345,456\}
\\[1mm]

10 & 345 &
\{1,2,3,4\}\Rightarrow234,\ \{1,2,5,6\}\Rightarrow256 &
\{234,235,246,256,345,456\}
\\[1mm]

11 & 124 &
\{1,2,3,5\}\Rightarrow135,\ \{1,2,3,6\}\Rightarrow126,\ 
\{1,3,4,5\}\Rightarrow145 &
\{124,126,145,156,246,456\}
\\[1mm]

11 & 156 &
\{1,2,3,4\}\Rightarrow234,\ \{1,2,3,5\}\Rightarrow135,\ 
\{2,3,4,6\}\Rightarrow346 &
\{135,136,156,345,346,456\}
\\[1mm]

11 & 456 &
\{1,2,3,4\}\Rightarrow234,\ \{1,2,3,5\}\Rightarrow135,\ 
\{1,3,5,6\}\Rightarrow356 &
\{234,236,246,345,356,456\}
\\
\bottomrule
\end{array}
\]
\vspace{0\baselineskip}
\endgroup

\noindent
By the same argument as in the case of type $6$, for each row, we have $N\in\operatorname{cl}(D\setminus\{N\})$.
Since $D\setminus\{N\}\subset\operatorname{cl}(S)$ and
$\operatorname{cl}(S)$ is tetrahedrally closed, we have
$\operatorname{cl}(D\setminus\{N\})\subset\operatorname{cl}(S)$.
Thus $N\in\operatorname{cl}(S)$, a contradiction.
Consequently, among the cases of types 8a, 10, and 11, the only case which is not excluded is the case of type 11 with $N=123$.
This gives the second candidate.
\end{proof}

The following lemma analyzes the first candidate obtained in
Lemma~\ref{lem:pointed-circuit-check}.

\begin{lem}\label{lem:first-candidate}
Let $H=(V,\mathcal H)$ be a $3$-uniform hypergraph with
$V=\{1,2,3,4,5,6\}$ satisfying the tetrahedron condition.
Suppose that
\[
S:=\{124,135,145,236,246,356,456\}\subset\mathcal H
\quad\text{and}\quad
123\notin\mathcal H.
\]
Then $\mathcal H=S$.  In particular, $H$ is an octahedral obstruction.
\end{lem}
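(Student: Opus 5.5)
The plan is to show that every triangle $T\in\binom{V}{3}\setminus(S\cup\{123\})$ is a non-face of $H$. There are $20-8=12$ such triangles. For each one, we will exhibit a four-element set $Q\ni T$ on which the tetrahedron condition fails if $T\in\mathcal H$. Concretely, we look for a $Q$ with $T\in\binom{Q}{3}$ such that exactly one other triangle of $\binom{Q}{3}$ lies in $S$ (hence in $\mathcal H$), and a further triangle of $\binom{Q}{3}$ is a triangle we already know to be a non-face. Adding $T$ would then give three faces on $Q$ while the fourth triangle is a non-face, which is exactly the forbidden configuration.

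Before doing this, it is convenient to organize the triangles using the octahedral structure. The opposite pairs are $\{1,6\},\{2,5\},\{3,4\}$. The twelve missing triangles split into two groups.
\begin{itemize}
\item Six of them contain exactly one opposite pair, and each such triangle has the form $\{p,p',x\}$. Examples are $162$ and $253$.
\item The other six contain no opposite pair but are non-facial, and hence are not of the form above.
\end{itemize}
Since $\binom{6}{3}=20=8+12$ and every triangle contains at most one opposite pair, the number containing an opposite pair is $3\cdot 4=12$. So all twelve missing triangles contain an opposite pair, and the real classification is by the opposite pair together with the third vertex. For the wanted four-sets, the natural choice is $Q=\{p,p',x,y\}$ where $x,y$ are adjacent in the octahedron. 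On such a $Q$, the triangles $pxy$ and $p'xy$ are octahedral faces, the triangles $pp'x$ and $pp'y$ are missing, and in the original octahedron this $Q$ has exactly two faces.

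The key step is now the following. If $pp'x\in\mathcal H$, then on $Q=\{p,p',x,y\}$ we have three faces $pxy$, $p'xy$, $pp'x$, provided both $pxy$ and $p'xy$ lie in $S$. The tetrahedron condition then forces $pp'y\in\mathcal H$. Iterating this around the 4-cycle link of the pair $\{p,p'\}$, all four triangles $pp'z$ become faces. We then need a contradiction, and the natural source is the deleted triangle $123$. Take the pair $\{1,6\}$ and suppose $16x\in\mathcal H$. When the propagation step applies, we get $162,163,164,165\in\mathcal H$. On $Q=\{1,2,3,6\}$ the triangles $162,163,236$ are then faces, so the tetrahedron condition forces $123\in\mathcal H$, a contradiction. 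The only subtlety is that a propagation step requires both $pxy$ and $p'xy$ to lie in $S$ rather than one of them being $123$. For the pair $\{1,6\}$ with edge $xy=23$, we have $123\notin S$, so that step is blocked. We then propagate along the other edges of the link cycle $2$–$4$–$5$–$3$ instead, namely $24,45,53$, all of whose relevant triangles lie in $S$, and still reach both $162$ and $163$.

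The same argument is repeated for the pairs $\{2,5\}$ and $\{3,4\}$, each time closing with a four-set containing $123$ and two $S$-faces, such as $Q=\{1,2,3,5\}$ with $135$ and $253$. By the symmetry of $S\cup\{123\}$ under the stabilizer of $123$ in the octahedral group, which permutes the three opposite pairs cyclically via $(123)(654)$, only the pair $\{1,6\}$ needs to be done explicitly. The main obstacle is bookkeeping, namely checking that the blocked edge never prevents reaching a configuration that forces $123$. Once all twelve triangles are excluded, we get $\mathcal H=S$, and $H$ is an octahedral obstruction by Definition~\ref{def:octa}.
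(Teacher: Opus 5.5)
Your proof is correct and is essentially the paper's argument: propagating around the path $2$--$4$--$5$--$3$ (avoiding the blocked edge $23$) and then closing on $\{1,2,3,6\}$ via $236\in S$ is exactly the paper's closure chain $\{1,2,4,6\}\Rightarrow146$, $\{1,4,5,6\}\Rightarrow156$, $\{1,3,5,6\}\Rightarrow136$, $\{1,2,3,6\}\Rightarrow123$, and the symmetry $(123)(654)$ reduces to the pair $\{1,6\}$ as in the paper (the paper also uses $(23)(45)$ to reduce to $M=126,156$, which your path argument makes unnecessary). Only cosmetic slips remain: your opening paragraph and the initial ``six/six'' split do not match what you actually do (all twelve missing triangles contain an opposite pair, as you then observe), and on $\{1,2,3,5\}$ only $135$ lies in $S$, while $125$ and $235$ are forced faces.
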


\begin{proof}
Set $C:=S\cup\{123\}
=\{123,124,135,145,236,246,356,456\}$.
Then $C$ is the boundary of the octahedron.
Its opposite pairs are $\{1,6\}, \{2,5\}, \{3,4\}$,
and the triangles in $C$ are precisely the triangles which contain one vertex from each of these three pairs.

It is enough to show that no triangle in $\binom{V}{3}\setminus C$ belongs to $\mathcal H$.
Let $M\in\binom{V}{3}\setminus C$.
Since $M$ contains one of the opposite pairs and
the automorphism group of the pointed circuit $(C,123)$ acts transitively on the three opposite pairs, we may assume that $M$ contains $\{1,6\}$.
Then the third vertex of $M$ belongs to $\{2,3,4,5\}$.
Since $(23)(45)\in\operatorname{Aut}(C,123)$, we may assume that this third vertex is either $2$ or $5$.
Hence, without loss of generality, we may assume that $M=126$ or $M=156$.
We use the notation $Q\Rightarrow T$ for closure steps as in the proof of Lemma~\ref{lem:pointed-circuit-check}.

First suppose that $M=126$.
Starting from $S\cup\{126\}$, we obtain
\[
\{1,2,4,6\}\Rightarrow146,\qquad
\{1,4,5,6\}\Rightarrow156,\qquad
\{1,3,5,6\}\Rightarrow136,\qquad
\{1,2,3,6\}\Rightarrow123.
\]
Thus $123\in\operatorname{cl}(S\cup\{126\})$.

Next suppose that $M=156$.
Starting from $S\cup\{156\}$, we obtain
\[
\{1,4,5,6\}\Rightarrow146,\qquad
\{1,2,4,6\}\Rightarrow126,\qquad
\{1,3,5,6\}\Rightarrow136,\qquad
\{1,2,3,6\}\Rightarrow123.
\]
Thus $123\in\operatorname{cl}(S\cup\{156\})$.

Therefore, if some $M\in\binom{V}{3}\setminus C$ belongs to $\mathcal H$, then the tetrahedron condition implies
$123\in\operatorname{cl}(S\cup\{M\})\subset\mathcal H$,
contrary to the assumption $123\notin\mathcal H$. 
Hence  $\mathcal H=S$.
Therefore $H$ is an octahedral obstruction.
\end{proof}

The following lemma analyzes the second candidate obtained in Lemma~\ref{lem:pointed-circuit-check}.

\begin{lem}\label{lem:second-candidate}
Let $H=(V,\mathcal H)$ be a $3$-uniform hypergraph with
$V=\{1,2,3,4,5,6\}$ satisfying the tetrahedron condition.
Suppose that
\[
S:=\{124,125,134,136,156,235,236,246,345,456\}\subset\mathcal H
\quad\text{and}\quad
123\notin\mathcal H.
\]
Then $\mathcal H=S$, and $H$ is a point hypergraph.
\end{lem}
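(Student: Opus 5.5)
The plan has two parts, mirroring Lemma~\ref{lem:first-candidate}. First we show $\mathcal H=S$ by ruling out every triangle outside $C:=S\cup\{123\}$. Then we show that the hypergraph $(V,S)$ is a point hypergraph, using the rational criterion Lemma~\ref{lem:rational-point-criterion} (or an explicit realization).

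For the first part, the triangles of $\binom{V}{3}\setminus C$ are the nine triangles
\[
126,\ 135,\ 145,\ 146,\ 234,\ 245,\ 256,\ 346,\ 356.
\]
The group $\operatorname{Aut}(C,123)$ contains $(123)(456)$ and $(12)(45)$, which are the symmetries used in Lemma~\ref{lem:pointed-circuit-check}. We first reduce these nine triangles to a few orbit representatives. For each representative $M$, we exhibit a chain of closure steps $Q\Rightarrow T$, starting from $S\cup\{M\}$ and ending at $123$. Some steps are already visible:
\begin{itemize}
\item From $S$ alone, $\{1,2,3,4\}$ has $124,134$ but no other face of $S$, so that quadruple does not fire.
\item If $M=234$, then $\{1,2,3,4\}\Rightarrow123$ directly.
\item If $M=126$, then $\{1,2,3,6\}$ contains $126,136,236$, so it gives $123$.
\item If $M=135$, then $\{1,2,3,5\}$ contains $125,135,235$, so it gives $123$.
\end{itemize}
Since $(123)(456)$ maps $126\mapsto 234$ and $135\mapsto 246\ldots$, we must check orbits carefully. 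The remaining triangles, such as $145$, $146$, $245$, $256$, $346$, $356$, should each reach one of $126$, $135$, $234$ through one or two intermediate closures. For example, with $M=145$, the quadruple $\{1,4,5,6\}$ contains $145,156,456$, giving $146$. With $M=146$, the quadruple $\{1,4,5,6\}$ gives $145$, and then $\{1,2,4,5\}$ contains $124,125,145$, giving $245$; we would then continue to reach $123$. By Lemma~\ref{lem:tetraclosure}, each such chain forces $123\in\mathcal H$, a contradiction. Hence $\mathcal H=S$.

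For the second part, we apply Lemma~\ref{lem:rational-point-criterion} to $H=(V,S)$. Since $C$ is a circuit (type $11$), the ten vectors $\partial\vv{T}$ with $T\in S$ are linearly independent, so $\operatorname{rank}L_H=10$. We must show $[\partial\vv{N}]_H\otimes1\neq0$ for every non-face $N$, namely $123$ and the nine triangles above. Suppose instead that $\partial\vv{N}$ lies in the $\mathbb Q$-span of $\{\partial\vv{T}\mid T\in S\}$. Choosing a minimal subset $S'\subset S$ with this property makes $S'\cup\{N\}$ a circuit, and by Lemma~\ref{lem:circuit} it is of one of the six types.
\begin{itemize}
\item For $N=123$, the circuit $S'\cup\{123\}$ cannot be of type $4$, $6$, $8$a, $8$b or $10$. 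Indeed, Lemma~\ref{lem:pointed-circuit-check} and the tetrahedral closedness of $S$ would put $123\in\operatorname{cl}(S)\subset\mathcal H$, except for the type $8$b and type $11$ candidates. Type $8$b is excluded because $S$ contains no octahedron minus a triangle together with $123$: the relevant faces would have to form $\{124,135,\dots\}$, but $135\notin S$. Type $11$ is genuinely the circuit $C$. This looks problematic, because $C$ shows $\partial\vv{123}$ \emph{is} a rational combination of the faces, with coefficient $2$.
\item For the other non-faces $N$, the same reasoning shows that a circuit $S'\cup\{N\}$ with $N\notin\operatorname{cl}(S')$ must be type $8$b or $11$ with $N$ in the special position, which is a finite check.
\end{itemize}

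Because of this issue with $123$, I expect the main obstacle to be the non-face $123$ itself. By the circuit relation $2\cdot\partial\vv{123}\in L_H$, the class $[\partial\vv{123}]_H$ has order dividing $2$ in $G_H$, so it is rationally zero and Lemma~\ref{lem:rational-point-criterion} does not apply. Instead we use the torsion directly. We will show $[\partial\vv{123}]_H\neq0$ in $G_H$ by checking that $\partial\vv{123}\notin L_H$: the coefficient vector solving the relation is forced to be $\tfrac12$ times the integers of the type-$11$ relation, so it is not integral. This gives a $\mathbb Z/2$ summand. We then build $\rho$ as a product $\rho=\rho_1\cdot\epsilon$. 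Here $\epsilon\colon G_H\to\{\pm1\}$ is a character nontrivial on $[\partial\vv{123}]_H$, which exists because this class has order $2$. The factor $\rho_1$ comes from Lemma~\ref{lem:separation} applied to the other nine classes, which are rationally nonzero by the circuit argument above. We scale $\rho_1$ as $g\mapsto t^{\nu(g)}$ and choose $t$ generic, as in the proof of Lemma~\ref{lem:delete-criterion}, so that multiplying by $\epsilon$ keeps all nine values different from $1$. On $123$, we get $\rho_1$ trivial on a torsion class times $\epsilon=-1$, hence $\rho\neq1$. Theorem~\ref{thm:char} then shows that $H$ is a point hypergraph. Alternatively, we could simply exhibit explicit parameters $a_{ij}\in\{\pm1,2^{\pm1}\}$ with $a_{12}a_{23}a_{31}=-1$, as in the $T_{3,3}$ example.
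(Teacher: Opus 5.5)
\paragraph{Part one.} This follows the paper's route: reduce the nine triangles outside $C$ to orbit representatives under $\langle(123)(456),(12)(45)\rangle$, then close up to $123$. You leave it unfinished, and there is a slip: $(123)(456)$ sends $135$ to $126$, not to $246$. The orbits are $\{126,135,234\}$ and $\{256,346,145,146,245,356\}$. For the representative $256$ the chain is $\{1,2,5,6\}\Rightarrow126$, then $\{1,2,3,6\}\Rightarrow123$. This part is routine to complete.

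\paragraph{Part two: the gap.} You assert that the nine non-faces other than $123$ satisfy $[\partial\vv{N}]_H\otimes1\ne0$ ``by the circuit argument above''. This is false.

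The ten vectors $\partial\vv{T}$ with $T\in S$ are independent, so $L_H\otimes\mathbb Q$ has dimension $10=\binom{6}{2}-6+1$. That is the dimension of the whole space of rational $1$-cycles on the complete graph on $V$. Hence $\partial\vv{N}\in L_H\otimes\mathbb Q$ for \emph{every} $N\in\binom{V}{3}$. Equivalently, $S\cup\{N\}$ is a type-$11$ circuit for every non-face $N$, so the ``finite check'' you postpone would produce exactly the circuits you need to rule out.

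Consequently every $\nu\colon G_H\to\mathbb Z$ vanishes on all ten non-face classes. Your generic factor $\rho_1$ is therefore trivial on them, and $\rho=\rho_1\epsilon$ agrees with $\epsilon$ on every non-face. You have only arranged $\epsilon\ne1$ at $123$.

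\paragraph{What is missing.} You need that all ten non-face classes are the same element.
\begin{enumerate}
\item The torsion subgroup of $G_H$ is the lattice of integral $1$-cycles on the complete graph modulo $L_H$. This equals $H_1(K_S;\mathbb Z)\cong H_1(\mathbb{RP}^2;\mathbb Z)\cong\mathbb Z/2$, where $K_S$ is the complex generated by $S$; it has complete $1$-skeleton and is the six-vertex $\mathbb{RP}^2$.
\item Each non-face class is nonzero in this group. This follows from your half-integrality computation repeated for each $N$, or from transitivity of $\operatorname{Aut}(S)$ on the ten non-faces. For instance, $(24365)$ preserves $S$ and sends $123$ to $146$.
\item Hence every non-face class is the generator of this $\mathbb Z/2$, and any character taking the value $-1$ on it realizes $H$.
\end{enumerate}

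The paper avoids all of this by directly checking the parameters $a_{23}=a_{26}=a_{35}=a_{45}=a_{46}=-1$ and $a_{ij}=1$ otherwise. With these, every face gives $1$ and every non-face gives $-1$. Your closing remark about exhibiting explicit parameters would close the gap only if you actually carried it out.

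\paragraph{A minor point.} An element of order $2$ need not admit a $\{\pm1\}$-valued character that is nontrivial on it. In general you extend from the cyclic subgroup using divisibility of $\kk^\times$. Here it is harmless, because the torsion is a $\mathbb Z/2$ direct summand.
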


\begin{proof}
Set $C:=S\cup\{123\}
=\{123,124,125,134,136,156,235,236,246,345,456\}$.
It is enough to show that no triangle in $\binom{V}{3}\setminus C$ belongs to $\mathcal H$.
Let $M\in\binom{V}{3}\setminus C$.
The subgroup of $\operatorname{Aut}(C,123)$ generated by $(123)(456)$ and
$(12)(45)$ has two orbits on $\binom{V}{3}\setminus C$, namely
\[
\{126,135,234\}
\quad\text{and}\quad
\{256,346,145,146,245,356\}.
\]
Hence it is enough to consider the two cases
$M=126$ and $M=256$.

First suppose that $M=126$.
Starting from $S\cup\{126\}$, we obtain
\[
\{1,2,3,6\}\Rightarrow123.
\]
Thus $123\in\operatorname{cl}(S\cup\{126\})$.

Next suppose that $M=256$.
Starting from $S\cup\{256\}$, we obtain
\[
\{1,2,5,6\}\Rightarrow126,\qquad
\{1,2,3,6\}\Rightarrow123.
\]
Thus $123\in\operatorname{cl}(S\cup\{256\})$.

Therefore, if some $M\in\binom{V}{3}\setminus C$ belongs to $\mathcal H$, then
the tetrahedron condition yields
$123\in\operatorname{cl}(S\cup\{M\})\subset\mathcal H$,
contrary to the assumption $123\notin\mathcal H$.  Hence $\mathcal H=S$.

To see that $H$ is a point hypergraph, it is enough to find one choice of parameters. Set
\[
a_{23}=a_{26}=a_{35}=a_{45}=a_{46}=-1,
\]
and $a_{ij}=1$ for all remaining pairs $1\le i<j\le6$.
Also set $a_{ji}=a_{ij}^{-1}$.
Then a direct computation gives
\[
a_{ij}a_{jk}a_{ki}
=
\begin{cases}
1 & \{i,j,k\}\in S,\\
-1 & \{i,j,k\}\notin S,
\end{cases}
\]
for every $i<j<k$. Therefore $H$ is a point hypergraph.
\end{proof}

Note that the set $S$ in Lemma~\ref{lem:second-candidate}
is, up to relabeling, the triangle set of the six-vertex
triangulation of $\mathbb{RP}^2$ in
\cite[Example~6.1.3]{CL}.

We are now ready to prove the main theorem of this section.

\begin{thm}\label{thm:tetraocta}
Let $H=(V,\mathcal H)$ be a $3$-uniform hypergraph with $|V|=6$.
Then $H$ is a point hypergraph if and only if $H$ satisfies the tetrahedron condition and is
not an octahedral obstruction.
\end{thm}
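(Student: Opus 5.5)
The ``only if'' direction is exactly Proposition~\ref{prop:tetraocta}, so the plan is to prove the converse: assume $H=(V,\mathcal H)$ with $|V|=6$ satisfies the tetrahedron condition and is not an octahedral obstruction, and show $H$ is a point hypergraph. The strategy is to reduce to Lemma~\ref{lem:rational-point-criterion}. That is, we check that $[\partial\vv{N}]_H\otimes1\ne0$ in $G_H\otimes_{\mathbb Z}\mathbb Q$ for every non-face $N$, with a separate treatment of the one configuration where this fails.

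Suppose some non-face $N$ has $[\partial\vv{N}]_H\otimes1=0$. By Lemma~\ref{lem:bad-nonface} there is a pointed circuit $(C,N)$ with $C\setminus\{N\}\subset\mathcal H$ and $N\notin\operatorname{cl}(C\setminus\{N\})$. By Lemma~\ref{lem:pointed-circuit-check}, after relabeling the vertices, $(C,N)$ is either the type 8b candidate or the type 11 candidate, in each case with $N=123$.

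In the type 8b case, the hypotheses of Lemma~\ref{lem:first-candidate} hold for the relabeled hypergraph, since it contains $S=C\setminus\{123\}$, omits $123$, and satisfies the tetrahedron condition. That lemma forces $\mathcal H=S$, so $H$ is an octahedral obstruction, contrary to assumption. In the type 11 case, Lemma~\ref{lem:second-candidate} gives $\mathcal H=S$ and says directly that $H$ is a point hypergraph, so we are done in that case.

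If neither case occurs, then no non-face is rationally trivial in $G_H\otimes_{\mathbb Z}\mathbb Q$, and Lemma~\ref{lem:rational-point-criterion} shows that $H$ is a point hypergraph. Relabeling vertices preserves being a point hypergraph, being an octahedral obstruction, and the tetrahedron condition, so working up to relabeling is harmless.

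The only step needing care is the bookkeeping in the case split. Once some bad non-face lands in the type 11 situation, the hypergraph is completely determined and explicitly realized. So we do not need to handle all non-faces simultaneously in that case; the argument is: either every non-face passes the rational test, or $H$ is one of the two explicit hypergraphs. With the preceding lemmas in hand, no step is a genuine obstacle.
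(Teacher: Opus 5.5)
Your proposal is correct and follows essentially the same route as the paper: the only-if direction comes from Proposition~\ref{prop:tetraocta}, and the converse chains Lemma~\ref{lem:rational-point-criterion}, Lemma~\ref{lem:bad-nonface}, Lemma~\ref{lem:pointed-circuit-check}, and the two candidate lemmas in the same order. The only difference is presentation: the paper assumes $H$ is non-point and reaches a contradiction in both candidate cases, while you split directly on whether a rationally trivial non-face exists. The two arguments are logically equivalent.
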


\begin{proof}
The only-if direction is Proposition~\ref{prop:tetraocta}.
We prove the converse.

Suppose that $H$ satisfies the tetrahedron condition and is not an octahedral obstruction.
Assume, to the contrary, that $H$ is not a point hypergraph. 
By Lemma~\ref{lem:rational-point-criterion}, there exists a non-face $N\notin\mathcal H$ such that
\[
[\partial\vv{N}]_H\otimes 1=0
\quad\text{in }G_H\otimes_{\mathbb Z}\mathbb Q.
\]
By Lemma~\ref{lem:bad-nonface}, there exists a pointed circuit $(C,N)$ such that
$C\setminus\{N\}\subset\mathcal H$
and $N\notin\operatorname{cl}(C\setminus\{N\})$.
By Lemma~\ref{lem:pointed-circuit-check}, up to relabeling of the vertices,
$(C,N)$ is one of the two candidates listed there.
If $(C,N)$ is the first candidate, then Lemma~\ref{lem:first-candidate} implies that $H$ is an octahedral obstruction. This contradicts the assumption.
If $(C,N)$ is the second candidate, then Lemma~\ref{lem:second-candidate}
implies that $H$ is a point hypergraph.
This also contradicts our assumption.
Therefore we conclude that $H$ is a point hypergraph.
\end{proof}

\begin{rem}\label{rem:bdl-six-vertices}
Combining the count of $175$ isomorphism classes of adequate
collections in \cite{BDL} with Theorem~\ref{thm:tetraocta}, we obtain
exactly $174$ point hypergraphs on six vertices up to isomorphism.
By \cite[Lemma~2.14]{HU}, this is also the number of isomorphism
classes of point schemes of six-variable skew polynomial algebras.

The octahedral obstruction is the single exception needed in
the six-variable case of the realization statement in
\cite[Theorem~7]{BDL}.
To see this, consider the octahedral obstruction
$H=(V,\mathcal H)$ with
$V=\{1,2,3,4,5,6\}$ and
$\mathcal H=\{124,135,145,236,246,356,456\}$.
Let $\mathcal C$ be the collection of coordinate planes indexed
by the non-faces of $H$.
It is adequate by the tetrahedron condition.
It is also dense, since $126,136,146,156$ are all non-faces and hence
\[
\#\{\mathbb P(\{1,6,k\})\in\mathcal C\mid k=2,3,4,5\}
=4\ge 3.
\]
However, $H$ is non-point by Theorem~\ref{thm:surface-obst}.
\end{rem}

\section*{Acknowledgments}
The authors thank Pieter Belmans for helpful comments and suggestions that improved the exposition of this paper.
The first author was supported by JSPS KAKENHI Grant Numbers JP24K00521 and JP24K00534. The second author was supported by JSPS KAKENHI Grant Number JP26K06761.

\section*{Declaration on the use of AI}
OpenAI's ChatGPT (GPT-6 Astra) was used as an aid in checking mathematical arguments, including finite computations, and in improving the organization, exposition, and wording of the manuscript.
All suggestions and outputs were independently reviewed and verified by the authors, who take full responsibility for the final content of the paper.

\end{document}